\theoremstyle{plain}
\newtheorem{theorem}{Theorem}[section]
\newtheorem{proposition}[theorem]{Proposition}
\newtheorem{lemma}[theorem]{Lemma}
\newtheorem{corollary}[theorem]{Corollary}
\theoremstyle{definition}
\newtheorem{remark}[theorem]{Remark}
\newtheorem{example}[theorem]{Example}
\theoremstyle{remark}
\renewenvironment{thebibliography}[1]{%
\begin{oldthebibliography}{#1}%
\setlength{\baselineskip}{1em}
\linespread{.2}
\small
\setlength{\parskip}{0.2ex}%
\setlength{\itemsep}{.15em}%
}%
{%
\end{oldthebibliography}%
}
\newcommand{\eps}{\varepsilon}
\newcommand{\N}{\mathbb{N}}
\newcommand{\R}{\mathbb{R}}
\newcommand{\X}{\mathsf{X}}
\newcommand{\Y}{\mathsf{Y}}
\newcommand{\cC}{\mathcal{C}}
\newcommand{\cP}{\mathcal{P}}
\newcommand{\cW}{\mathcal{W}}
\DeclareMathOperator*{\argmin}{arg\, min}
\newcommand{\qforallq}{\quad\mbox{for all}\quad}
\newcommand{\qforq}{\quad\mbox{for}\quad}
\newcommand{\mykill}[1]{}
\numberwithin{equation}{section}
\begin{document}

\title{\vspace{-3em}On the Convergence Rate of Sinkhorn's Algorithm\thanks{The authors thank Stephan Eckstein and Flavien L\'eger for helpful discussions.}}
\date{\today}
\author{
  Promit Ghosal%
  \thanks{
  Department of Statistics, University of Chicago, promit@uchicago.edu. Research supported by NSF Grant DMS-2153661.}
  \and
  Marcel Nutz%
  \thanks{
  Depts.\ of Statistics and Mathematics, Columbia University, mnutz@columbia.edu. Research supported by NSF Grants DMS-1812661, DMS-2106056, DMS-2407074.}
  }
\maketitle \vspace{-1.2em}

\begin{abstract}
  We study Sinkhorn's algorithm for solving the entropically regularized optimal transport problem. Its iterate~$\pi_{t}$ is shown to satisfy $H(\pi_{t}|\pi_{*})+H(\pi_{*}|\pi_{t})=O(t^{-1})$ where~$H$ denotes relative entropy and~$\pi_{*}$ the optimal coupling. This holds for a large class of cost functions and marginals, including quadratic cost with subgaussian marginals. 
  We also obtain the rate $O(t^{-1})$ for the dual suboptimality and $O(t^{-2})$ for the marginal entropies. More precisely, we derive non-asymptotic bounds, and in contrast to previous results on linear convergence that are limited to bounded costs, our estimates do not deteriorate exponentially with the regularization parameter. We also obtain a stability result for~$\pi_{*}$ as a function of the marginals, quantified in relative entropy.
\end{abstract}

\vspace{.3em}

{\small
\noindent \emph{Keywords} Entropic Optimal Transport; Sinkhorn's Algorithm; IPFP; Stability

\noindent \emph{AMS 2010 Subject Classification}
90C25; %
49N05 %
}
\vspace{.6em}

\maketitle

\section{Introduction}\label{se:intro}

Let $(\X,\mu)$ and $(\Y,\nu)$ be Polish probability spaces and $c:\X\times\Y\to\R$ a measurable and nonnegative (or suitably integrable) cost function. The entropic optimal transport problem with regularization parameter~$\eps\in(0,\infty)$ is
\begin{equation}\label{eq:epsOT}
 \inf_{\pi\in\Pi(\mu,\nu)} \int_{\X\times\Y} c(x,y) \, \pi(dx,dy) + \eps H(\pi|\mu\otimes\nu),
\end{equation}
where $H(\,\cdot\,|\mu\otimes\nu)$ denotes relative entropy (Kullback--Leibler divergence) with respect to the product measure~$\mu\otimes\nu$ of the marginals,
$$
  H(\pi|\mu\otimes\nu):=\begin{cases}
\int \log \frac{d\pi}{d(\mu\otimes\nu)} \,d\pi, & \pi\ll \mu\otimes\nu,\\
\infty, & \pi\not\ll \mu\otimes\nu,
\end{cases} 
$$
and $\Pi(\mu,\nu)$ the set of all couplings; i.e., probability measures~$\pi$ on~$\X\times\Y$ with marginals~$(\mu,\nu)$.
Entropic optimal transport traces back to  Schr\"odinger's thought experiment~\cite{Schrodinger.31} on the most likely evolution of a particle system and its mathematical formalization as the Schr\"odinger bridge (see~\cite{Follmer.88, Leonard.14} for surveys). 
Nowadays, following~\cite{Cuturi.13}, entropic optimal transport is widely used as an approximation of the (unregularized) Monge--Kantorovich optimal transport problem corresponding to $\eps=0$, especially for computing the Wasserstein distance in high-dimensional applications such as machine learning, statistics, image and language processing (e.g., \cite{AlvarezJaakkola.18, WGAN.17,ChernozhukovEtAl.17,RubnerTomasiGuibas.00}). As a result, the cost function of principal interest  is~$c(x,y)=|x-y|^{p}$ on~$\R^{d}\times\R^{d}$, especially for $p=2$, and the convergence properties as $\eps\to0$ are an active area of research (early contributions are 
\cite{ChenGeorgiouPavon.16, CominettiSanMartin.94,Leonard.12, Mikami.02, Mikami.04}, current ones are \cite{AltschulerNilesWeedStromme.21, 
Berman.20, BlanchetJambulapatiKentSidford.18, CarlierPegonTamanini.22, ConfortiTamanini.19, EcksteinNutz.22, GigliTamanini.21, NennaPegon.24, NutzWiesel.21, Pal.19, Weed.18}).
The main appeal of~\eqref{eq:epsOT} in this applied context is that it can be solved efficiently by Sinkhorn's algorithm, also called iterative proportional fitting procedure or IPFP; see \cite{PeyreCuturi.19} and its numerous references.

Sinkhorn's algorithm can be stated equivalently in primal or dual terms. In the primal formulation, it is initialized  at the probability measure $\pi_{-1}\propto e^{-c/\eps}d(\mu\otimes\nu)$ and the iterates are defined for $t\geq0$ by
\begin{align}\label{eq:SinkPrimal}
    \pi_{2t} := \argmin_{\Pi(\ast,\nu)} H(\,\cdot\,|\pi_{2t-1}), \qquad
    \pi_{2t+1} := \argmin_{\Pi(\mu,\ast)} H(\,\cdot\,|\pi_{2t}), 
\end{align}
where $\Pi(\ast,\nu)$ is the set of measures on $\X\times\Y$ with second marginal~$\nu$ (and arbitrary first marginal), and $\Pi(\mu,\ast)$ is defined analogously. %
The dual formulation produces two sequences of functions $\varphi_{t}: \X\to\R$ and $\psi_{t}: \Y\to\R$. Namely, we initialize at $\varphi_{0}:=0$  and define for $t\geq0$ the iterates
\begin{align}\label{eq:SinkDual}
\begin{split}
    \psi_{t}(y) &:= -\log \int_{\X} e^{\varphi_{t}(x)-c(x,y)/\eps} \, \mu(dx),\\
    \varphi_{t+1}(x)&:= -\log \int_{\Y} e^{\psi_{t}(y)-c(x,y)/\eps}\, \nu(dy).
\end{split}  
\end{align}
The primal and dual formulations are related by
$d\pi_{2t}=e^{\varphi_{t}\oplus\psi_{t}-c/\eps}\,d(\mu\otimes\nu)$ 
where $(\varphi\oplus\psi)(x,y):=\varphi(x)+\psi(y)$, and similarly for~$\pi_{2t-1}$ with~$\psi_{t-1}$. To wit, the minimization in~\eqref{eq:SinkPrimal} is solved in closed form by~\eqref{eq:SinkDual}. The latter boils down to a simple matrix-vector multiplication in a discretized setting, whence the suitability for high-dimensional problems.

Our main result is the convergence of the algorithm in the sense of relative entropy and its rate, under general conditions including unbounded costs. Methodologically, we proceed in two steps. First, we show that certain (exponential) moment estimates for the iterates $\varphi_{t},\psi_{t}$ imply a convergence rate. Second, we show how to obtain the necessary  estimates for important classes of cost functions~$c$ and marginals~$(\mu,\nu)$.
The following illustrates the main findings by giving a simplified statement for powers of the distance cost, covering in particular quadratic cost with subgaussian marginals $(\mu,\nu)$. We denote by $\pi_{*}\in\Pi(\mu,\nu)$ the unique solution  of~\eqref{eq:epsOT} and by $(\varphi_{*},\psi_{*})$ the associated dual potentials; see \cref{se:backgr} for the pertinent definitions.

\begin{example}
  Let $\X=\Y=\R^{d}$ and $c(x,y)=|x-y|^{p}$ where $p\in [0,\infty)$. Suppose that 
   $$
    \int e^{\lambda |x|^{p}}\, \mu(dx) +\int e^{\lambda |y|^{p}}\, \nu(dy)<\infty \quad \mbox{for some }\lambda>0.
   $$
   Let~$\pi_{t}$ and~$(\varphi_{t},\psi_{t})$ be the Sinkhorn iterates~\eqref{eq:SinkPrimal}--\eqref{eq:SinkDual}, and let~$(\mu_{t},\nu_{t})$ denote the marginals of~$\pi_{t}$. We have
   \begin{gather}
     H(\pi_{*}|\pi_{t}) + H(\pi_{t}|\pi_{*}) = O(t^{-1}), \label{eq:exPiEntrop}\\
     \int (\varphi_{*}-\varphi_{t}) \,d\mu + \int (\psi_{*}-\psi_{t}) \,d\nu = O(t^{-1}), \label{eq:exDualSubopt}\\
     H(\mu_{t}|\mu) + H(\mu|\mu_{t}) + H(\nu_{t}|\nu) + H(\nu|\nu_{t})= O(t^{-2}). \label{eq:exMarginalEntrop}
   \end{gather} 
   The constants implicit in $O(\cdot)$ above are detailed in Sections~\ref{se:Sink} and~\ref{se:biconjugates}; their dependence on the regularization parameter~$\eps$ is a scaling by~$\eps^{-2p}$.
\end{example} 

Note that convergence rates for total variation follow immediately from the above rates via Pinsker's inequality, with constants scaling like~$\eps^{-p}$.
More generally, we also cover costs $c(x,y)=d(x,y)^{p}$ for an arbitrary (measurable) distance~$d$ on a Polish space and differentiable costs~$c$ with $|D c(x,y)|\leq C(1+|x|+|y|)^{p-1}$ on a Banach space, or any measurable costs with similar growth properties. Indeed, our rates depend only on an a priori estimate of the form
\begin{align}\label{eq:C1intro}
    C_{1}&:=\sup_{t\geq0}\inf_{\alpha>0} \frac{2}{\alpha} \left(\frac32 + \log \int e^{\alpha |\varphi_{t}-\varphi_{*}|}\,d\mu\right)<\infty.
\end{align} 
Our key result (\cref{th:SinkhornRate}) is a \emph{non-asymptotic} bound of the form
  \begin{align*}
    H(\pi_{*}|\pi_{2t}) 
    &\leq \frac{C}{t-t_{1}}, \quad t>t_{1}
  \end{align*}
where $t_{1}\in\N$ is bounded explicitly and~$C$ depends at most quadratically on~$\eps^{-p}$. By a bootstrap-type argument, this turns out to imply similar non-asymptotic bounds for the other quantities in \eqref{eq:exPiEntrop}--\eqref{eq:exMarginalEntrop}; cf.\ Corollaries~\ref{co:reverseEntropyRate} and~\ref{co:suboptimality}.

To provide~\eqref{eq:C1intro}, we establish that $|\varphi_{t}(x)|\leq C(1+|x|^{p})$ for a general class of costs whenever $\int e^{\lambda |y|^p} \nu(dy)<\infty$ for some $\lambda>0$. This implies~\eqref{eq:C1intro} under the symmetric condition $\int e^{\lambda |x|^p} \mu(dx)<\infty$. Moreover, we obtain a linear dependence of~\eqref{eq:C1intro} on~$\eps^{-p}$. Mirroring a priori estimates for $c$-convex functions in optimal transport theory~\cite{GangboMcCann.96, Villani.09}, our approach applies to arbitrary ``biconjugate'' functions, including Sinkhorn iterates and dual potentials (cf.\ \cref{se:biconjugates}). In particular, it provides a reasonably general solution to the vexing problem of bounding potentials from below.

A key innovation for our convergence analysis is to use the ``weighted Csisz\'ar--Kullback--Pinsker'' inequality of Bolley--Villani~\cite{BolleyVillani.05}. On the strength of bounds like~\eqref{eq:C1intro}, the Bolley--Villani inequality  allows us to estimate the relative entropy between certain couplings by the (symmetric) relative entropy of their marginals, with constants depending on the moments of the potentials of the couplings. This also gives rise to a new stability result (\cref{th:stability}) for the optimal coupling $\pi_{*}$ of~\eqref{eq:epsOT} wrt.\ the marginals $(\mu,\nu)$ which is of independent interest. For the convergence results, the second key innovation uses the finer details of Sinkhorn's algorithm and its monotonicity properties: we relate the improvement $H(\pi_{*}|\pi_{2t})-H(\pi_{*}|\pi_{2t+2})$ to a marginal entropy and deduce a difference equation enabling us to analyze $H(\pi_{*}|\pi_{2t})$ through elementary arguments.

Sinkhorn's algorithm dates back as far as~\cite{DemingStephan.40}; see also \cite{Fortet.40, Leonard.19} for Fortet's (related but different) iteration. Early contributions to the analysis include \cite{IrelandKullback.68,Kullback.68,Sinkhorn.64,SinkhornKnopp.67}. Introduced by \cite{FranklinLorenz.89} in the discrete (matrix scaling) case and generalized to the continuous setting by~\cite{ChenGeorgiouPavon.16b}, a contraction argument can be used to show linear (i.e., geometric) convergence in the Hilbert--Birkhoff metric when the cost~$c$ is uniformly bounded. A different avenue, viable also in the multi-marginal problem, is taken in  \cite{Carlier.21} where the algorithm is seen as a dual block-coordinate ascent. Here, boundedness of~$c$ is responsible for the strong concavity which yields linear convergence of the iterates $\varphi_{t},\psi_{t}$ in $L^{2}$ and of their  suboptimality gap. A common feature of all previous (but see the end of this section) arguments for linear convergence is that (a) bounded cost is crucial and (b) the constants deteriorate \emph{exponentially} in the regularization parameter~$\eps$. For instance, the suboptimality gap decays like $\beta^{t}$ in \cite{Carlier.21},  where $\beta=1-e^{-24\|c\|_{\infty}/\eps}$. As~$\eps$ is taken to be small in practice, $\beta$ is extremely close to~$1$, arguably failing to explain the fast convergence observed in computational practice.

Using weak$^{*}$ compactness, \cite{Ruschendorf.95} obtained a qualitative convergence result (in relative entropy) relaxing the boundedness condition on the cost. However, several other conditions are introduced, including one (see~(B1) in~\cite{Ruschendorf.95}) that essentially forces~$c$  to be bounded from above in one variable and thus excludes quadratic cost with unbounded marginal supports. A different form of compactness, in the space of dual potentials, was used in~\cite{NutzWiesel.22} to show convergence $\pi_{t}\to\pi_{*}$ in total variation under the condition $\int e^{\lambda c}\,d(\mu\otimes\nu)<\infty$ for some~$\lambda>0$. Yet this approach does not yield convergence in relative entropy, and more importantly, does not yield a rate. Another qualitative result, on weak convergence $\pi_{t}\to\pi_{*}$, was stated in \cite{Nutz.20}  for a very general continuous cost~$c$, where it was observed that Sinkhorn convergence can be deduced from the weak stability of~\eqref{eq:epsOT} wrt.\ the marginals $(\mu,\nu)$. The latter was established in~\cite{GhosalNutzBernton.21b} using the geometric approach of~\cite{BerntonGhosalNutz.21} which avoids integrability conditions on~$c$. Again, the result is purely qualitative. 

The same link with stability was exploited in~\cite{EcksteinNutz.21} whose main result is the quantitative stability of the optimal coupling $\pi_{*}$ wrt.\ the marginals $(\mu,\nu)$ in the $p$-Wasserstein distance $\cW_{p}$, under certain continuity and integrability conditions on the cost and marginals. By a general result of~\cite{Leger.21}, the marginals $(\mu_{t},\nu_{t})$ of the Sinkhorn iterate~$\pi_{t}$ converge to $(\mu,\nu)$ in relative entropy at rate $O(t^{-1})$. Combining the two results, \cite{EcksteinNutz.21} shows in particular that for subgaussian marginals $(\mu,\nu)$ and costs~$c$ which are the product of two Lipschitz functions, $\cW_{2}(\pi_{t},\pi_{*})=O(t^{-1/16})$. While the approach of~\cite{EcksteinNutz.21} does not yield strong convergence (total variation or even relative entropy), it is the closest convergence result in that it covers (some) unbounded costs and is quantitative. We note that the rate in \eqref{eq:exPiEntrop} is significantly faster and that the marginal rate of~\cite{Leger.21} is improved to $O(t^{-2})$ in~\eqref{eq:exMarginalEntrop} in our context.

Stability of entropic optimal transport wrt.\ the marginals has also been studied independently of Sinkhorn's algorithm, in addition to the aforementioned works. The first stability result is due to~\cite{CarlierLaborde.20}, for a setting with bounded cost and marginals equivalent to a common reference measure with densities uniformly bounded above and below. The authors showed by a differential approach that the potentials are continuous  in~$L^{p}$ relative to the marginal densities. %
Very recently, a differential approach was also used in \cite{CarlierChizatLaborde.22} to show uniform continuity of the potentials and their derivatives in Wasserstein distance~$\cW_{2}$, in a compact (possibly multi-marginal) setting. For two marginals, uniform continuity of the potentials in~$\cW_{1}$ was previously obtained by \cite{DeligiannidisDeBortoliDoucet.21} using the Hilbert--Birkhoff projective metric. Also very recently, \cite{BayraktarEcksteinZhang.22} showed a stability result in Wasserstein distance for unbounded marginals which extends to divergences other than relative entropy. Finally, in the setting of dynamic Schr\"odinger bridges satisfying a logarithmic Sobolev inequality for the underlying dynamics and marginal distributions with finite Fisher information, \cite{ChiariniConfortiGrecoTamanini.22} used a negative order weighted homogeneous Sobolev norm to give a stability result for the relative entropy of the Schr\"odinger bridges wrt.\ their marginals. While in the dynamic setting, this result is closest to the spirit of our \cref{th:stability} as the estimate is in relative entropy and costs may be unbounded.

Since this paper was completed, several results on Sinkhorn's algorithm have been obtained, tackling some of the aforementioned issues. For quadratic cost and  unbounded continuous marginals satisfying a log-concavity condition, \cite{ConfortiDurmusGreco.23} proves linear convergence based on a fine analysis of the gradients of Schr\"odinger potentials and Sinkhorn iterates. Meanwhile, \cite{Eckstein.23} adapts Hilbert's projective metric to unbounded functions and obtains linear convergence for a general class of unbounded costs functions and sufficiently integrable marginals. For bounded and sufficiently regular cost and marginals, \cite{ChizatDelalandeVaskevicius.24} shows that linear convergence holds with a constant depending polynomially (rather than exponentially) on the regularization parameter. 

The remainder of this paper is organized as follows. \Cref{se:backgr} introduces the necessary background and notation. \Cref{se:entropyEst} provides basic estimates for the relative entropy between certain couplings in terms of their marginals, including the stability result for the optimal couplings. \Cref{se:Sink} establishes the main results on Sinkhorn's algorithm, taking for granted an a priori estimate for the dual iterates. Finally, \cref{se:biconjugates} shows how to obtain those estimates for arbitrary biconjugate functions, for a large class of costs and marginals.

\section{Background and Notation}\label{se:backgr}

In the entropic optimal transport problem~\eqref{eq:epsOT}, we may divide by~$\eps$ to reduce to the case $\eps=1$, at the expense of changing the cost function to~$c/\eps$. In what follows, we will thus assume that $\eps=1$, yet keep an eye on the scaling behavior of the results. Our problem then reads 
\begin{equation}\label{eq:EOT}
 \cC_{1}(\mu,\nu):=\inf_{\pi\in\Pi(\mu,\nu)} \int c \, d\pi +  H(\pi|\mu\otimes\nu).
\end{equation}
Throughout, $(\X,\mu)$ and $(\Y,\nu)$ are Polish probability spaces and  $c:\X\times\Y\to\R$ is a measurable function such that $e^{-c}\in L^{1}(\mu\otimes\nu)$ and~\eqref{eq:EOT} is finite. We write $\sim$ for measure-theoretic equivalence (having the same nullsets) and $a^{\pm}=\max\{\pm a,0\}$ for $a\in\R$.

The following facts can be found, e.g., in~\cite[Section~4]{Nutz.20}. The problem~\eqref{eq:EOT} admits a unique minimizer $\pi_{*}\in\Pi(\mu,\nu)$; moreover, $\pi_{*}\sim\mu\otimes\nu$ with density 
\begin{align} \label{eq:potentials}
  d\pi_{*} = e^{\varphi_{*}\oplus \psi_{*} - c}\,d(\mu\otimes\nu)
\end{align} 
for some measurable functions $\varphi_{*}:\X\to\R$ and $\psi_{*}:\Y\to\R$  called \emph{potentials}. The potentials are a.s.\ unique up an additive constant; i.e., $(\varphi_{*}-\alpha,\psi_{*}+\alpha)$ is another pair of potentials for any $\alpha\in\R$. The fact that $\pi_{*}\in\Pi(\mu,\nu)$ is equivalent to the \emph{Schr\"odinger system}
\begin{align}\label{eq:Schrodingersystem}
\begin{split}
    \psi_{*}(y) &= -\log \int e^{\varphi_{*}(x)-c(x,y)} \, \mu(dx),\\
    \varphi_{*}(x)&= -\log \int e^{\psi_{*}(y)-c(x,y)}\, \nu(dy).
\end{split}  
\end{align}

Occasionally it is convenient to absorb the cost into the ``reference'' measure $R\in\cP(\X\times\Y)$,
\begin{align}\label{eq:defR}
  dR=\xi^{-1} e^{-c}d(\mu\otimes\nu), \qquad \xi := \int e^{-c}\,d(\mu\otimes\nu)
\end{align} 
which allows us to state~\eqref{eq:EOT} as the entropy minimization
\begin{align}\label{eq:entropyMin}
    \cC_{1}(\mu,\nu) = \inf_{\pi\in\Pi(\mu,\nu)} H(\pi|R) - \log\xi.
\end{align} 
Here $\cP(\mathsf{Z})$ denotes the set of Borel probability measures on a topological space~$\mathsf{Z}$.

Recall that the primal iterates $\pi_{t}\in\cP(\X\times\Y)$ of Sinkhorn's algorithm were stated in~\eqref{eq:SinkPrimal} and the dual iterates $(\varphi_{t},\psi_{t})$ in~\eqref{eq:SinkDual}, where we now have $\eps=1$. We remark in passing that~\eqref{eq:SinkDual} can be seen as the Gauss--Seidel algorithm for the system~\eqref{eq:Schrodingersystem}; this will be relevant in \cref{se:biconjugates}. Recall also that $\varphi_{0}:=0$. With the conventions that $\pi_{-1}:=R$ and $\psi_{-1}:=-\log \xi$ as defined in~\eqref{eq:defR}, the primal and dual iterates are related for all $t\geq0$ by
\begin{align}\label{eq:primalDual}
d\pi_{2t}=e^{\varphi_{t}\oplus\psi_{t}-c}\,d(\mu\otimes\nu), \qquad d\pi_{2t-1} = e^{\varphi_{t}\oplus\psi_{t-1}-c}\,d(\mu\otimes\nu);
\end{align}
see~\cite[Algorithm~6.2]{Nutz.20}.
By construction, $\pi_{2t}$ has marginals $(\mu_{2t},\nu)$ while $\pi_{2t-1}$ has marginals $(\mu,\nu_{2t-1})$, where 
\begin{align}\label{eq:SinkMarginalDensities}
    \frac{d\mu_{2t}}{d\mu}=e^{\varphi_{t}-\varphi_{t+1}}, \qquad \frac{d\nu_{2t-1}}{d\nu}=e^{\psi_{t-1}-\psi_{t}}, \qquad t\geq0.
\end{align}
We recall from~\cite[Lemma~6.4\,(i)]{Nutz.20} that $\varphi_{t}\in L^{1}(\mu)$ and $\psi_{t}\in L^{1}(\nu)$. Moreover, writing $\mu(\varphi):=\int \varphi\,d\mu$ for brevity,
\begin{align}
   0 \leq \mu(\varphi_{t}) &\leq \mu(\varphi_{t+1}) \leq \cC_{1}(\mu,\nu) + \log \xi, && t\geq0, \label{eq:SinkMeansIncrease1}\\
   -\log \xi \leq \nu(\psi_{t}) &\leq \nu(\psi_{t+1}) \leq \cC_{1}(\mu,\nu), && t\geq-1. \label{eq:SinkMeansIncrease2}
\end{align} 
Here the first two inequalities in each line hold by~\cite[Lemma~6.4\,(iii)]{Nutz.20} while the last inequality follows from $\mu(\varphi_{t})+\nu(\psi_{t})\leq \cC_{1}(\mu,\nu)$ via the first inequality in the other line. The inequality $\mu(\varphi_{t})+\nu(\psi_{t})\leq \cC_{1}(\mu,\nu)$, in turn, holds by~\cite[Lemma~6.4\,(ii) and Proposition~6.5]{Nutz.20} or, more directly, by duality. In most applications, $c$ is nonnegative and thus $\log \xi\leq0$, so that the inequalities also hold with that term omitted.

\section{Auxiliary Entropy Estimates}\label{se:entropyEst}

In this section, we derive several auxiliary results that bound the relative entropy of couplings in terms of their marginals and potentials. For our analysis of Sinkhorn's algorithm in \cref{se:Sink}, \cref{le:stabilityEst} below will be used to estimate the relative entropy between the Sinkhorn iterate $\pi_{t}$ and the optimal coupling $\pi_{*}$ (cf.~\eqref{eq:stabilityAppliedSink}), whereas \cref{le:valueFunEstimate} will be used to bound the dual suboptimality of $\pi_{t}$ (cf.\ \cref{co:suboptimality}). As a by-product, our considerations yield a stability result for the optimal coupling with respect to the marginals, reported in \cref{th:stability}.

\begin{lemma}\label{le:BolleyVillani}
  Consider $\mu,\mu'\in\cP(\X)$ with $H(\mu'|\mu)<\infty$ and let $F:\X\to\R$ be measurable. Then
  $$
    \left |\int F \,d(\mu'-\mu) \right| \leq C(F) \left(\sqrt{H(\mu'|\mu)} + \frac12 H(\mu'|\mu)\right)
  $$
  where
  $$
    C(F):=\inf_{\alpha>0} \frac{2}{\alpha} \left(\frac32 + \log \int e^{\alpha |F|}\,d\mu\right).
  $$
\end{lemma}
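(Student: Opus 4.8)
The plan is to bound $\left|\int F\,d(\mu'-\mu)\right|$ by splitting $F$ into its positive and negative parts (or, more efficiently, working directly with $|F|$ and the signed measure) and then invoking a change-of-measure / entropy inequality of Donsker--Varadhan type. Concretely, for any $\alpha>0$ write
\begin{align*}
  \int F\,d(\mu'-\mu) = \frac{1}{\alpha}\int (\alpha F)\,d\mu' - \frac{1}{\alpha}\int(\alpha F)\,d\mu,
\end{align*}
and estimate each term. The first term is handled by the variational formula for relative entropy: for any measurable $G$ with $\int e^{G}\,d\mu<\infty$ one has $\int G\,d\mu' \le H(\mu'|\mu) + \log\int e^{G}\,d\mu$. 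Applying this with $G=\alpha F$ and with $G=-\alpha F$ (and using $|F|$ to dominate both $e^{\pm\alpha F}\le e^{\alpha|F|}$), one gets control of $\int(\pm\alpha F)\,d\mu'$ in terms of $H(\mu'|\mu)$ and $\log\int e^{\alpha|F|}\,d\mu$.

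The remaining issue is the term $\frac{1}{\alpha}\int(\alpha F)\,d\mu$ that appears on the $\mu$-side: after applying Donsker--Varadhan one is left with a quantity like $\frac{1}{\alpha}\big(H(\mu'|\mu) + \log\int e^{\alpha|F|}\,d\mu\big) - \frac{1}{\alpha}\int (\pm\alpha F)\,d\mu$, and one wants to absorb $\mp\int F\,d\mu$ back. The clean way is the symmetric trick of Bolley--Villani: bound $\big|\int F\,d(\mu'-\mu)\big|$ by choosing the sign of $\alpha F$ to match the sign of $\int F\,d(\mu'-\mu)$, so that WLOG the quantity to bound is $\int F\,d\mu' - \int F\,d\mu$ with $\int F\,d(\mu'-\mu)\ge 0$; then use $\int F\,d\mu \ge -\frac1\alpha\log\int e^{-\alpha F}\,d\mu\ge -\frac1\alpha\log\int e^{\alpha|F|}\,d\mu$ (Jensen, resp.\ the inequality $\log\int e^{\alpha F}\,d\mu\ge \alpha\int F\,d\mu$ is not quite what is needed — one uses instead $-\log\int e^{-\alpha F}d\mu \le \int F\,d\mu$, i.e.\ Jensen for $-\log$). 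Combining, $\int F\,d(\mu'-\mu)\le \frac{1}{\alpha}H(\mu'|\mu) + \frac{2}{\alpha}\log\int e^{\alpha|F|}\,d\mu$, and a further refinement using $\sqrt{\cdot}$ is needed to get the $\sqrt{H}$ term rather than only the linear $H$ term.

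To extract the sharper estimate with $\sqrt{H(\mu'|\mu)}$, the standard device is to optimize over $\alpha$ using the substitution $\alpha\mapsto$ a value that balances $H/\alpha$ against the constant $3/2$ inside; more precisely, one proves the two-sided bound $\int F\,d(\mu'-\mu)\le \frac{2}{\alpha}\big(\frac32 + \log\int e^{\alpha|F|}\,d\mu\big) + \frac{1}{\alpha}\cdot(\text{something}\le H)$ and then notes the elementary inequality that for $a,b>0$ one has $\inf_{\alpha>0}\big(\frac{a}{\alpha} + b\big)$-type expressions, combined with $H \le \sqrt H + \frac12 H$ when $H\le$ a threshold and $\sqrt H\le$ it otherwise, produces $\sqrt H + \frac12 H$. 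Actually the cleanest route: show $\big|\int F\,d(\mu'-\mu)\big| \le \big(\inf_{\alpha>0}\frac{2}{\alpha}(\frac32+\log\int e^{\alpha|F|}d\mu)\big)\cdot g(H(\mu'|\mu))$ where $g(h)=\sqrt h + \tfrac12 h$, by reducing via the $1$-homogeneity in $C(F)$ to the normalized case $C(F)=1$ and then checking the scalar inequality directly. The main obstacle is this last bookkeeping step — getting exactly the constant $3/2$ and the exact form $\sqrt H + \frac12 H$ out of the Donsker--Varadhan bound after optimizing in $\alpha$ — which is a careful but elementary calculus exercise; everything else is a direct application of the entropy variational principle. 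I would look to Bolley--Villani~\cite{BolleyVillani.05} for the precise choice of $\alpha$ that makes the constants come out as stated.
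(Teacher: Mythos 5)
Your proposal does not actually prove the lemma; it stops precisely at the point where the real content lies, and the Donsker--Varadhan route you sketch cannot, on its own, deliver the claimed estimate. For a fixed $\alpha>0$, writing $L_\alpha:=\log\int e^{\alpha|F|}\,d\mu$, the variational formula and Jensen give
\begin{align*}
\Big|\int F\,d(\mu'-\mu)\Big|\ \le\ \int|F|\,d\mu'+\int|F|\,d\mu\ \le\ \frac{H(\mu'|\mu)+L_\alpha}{\alpha}+\frac{L_\alpha}{\alpha}
= \frac{H(\mu'|\mu)}{\alpha}+\frac{2L_\alpha}{\alpha}.
\end{align*}
This bound has a constant term $\tfrac{2L_\alpha}{\alpha}>0$ that does \emph{not} vanish as $H(\mu'|\mu)\to0$, whereas the target bound $C(F)\bigl(\sqrt{H}+\tfrac12 H\bigr)$ does. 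So the linear-in-$H$ estimate you obtain is genuinely weaker and cannot be ``bookkept'' into the stated inequality. Moreover, because $C(F)$ is defined as an infimum over $\alpha$, the claim to be proved is that for \emph{every} fixed $\alpha$ one has $\bigl|\int F\,d(\mu'-\mu)\bigr|\le \tfrac{2}{\alpha}\bigl(\tfrac32+L_\alpha\bigr)\bigl(\sqrt H+\tfrac12 H\bigr)$ uniformly in $H$; you cannot rescue the argument by choosing $\alpha$ adapted to $H$, since that choice need not achieve the infimum defining $C(F)$. The passage from a linear-in-$H$ bound to the $\sqrt H+\tfrac12 H$ form, with the constant $\tfrac32$, is exactly the content of the weighted Csisz\'ar--Kullback--Pinsker inequality of Bolley--Villani; it is not an elementary optimization over $\alpha$ but a separate argument combining a Pinsker-type estimate with an exponential-moment (Orlicz--Young duality) estimate.

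The paper does not attempt to reprove this: it cites~\cite[Theorem~2.1]{BolleyVillani.05} as a black box, which states $\|\phi\,d(\mu'-\mu)\|_{TV}\le\bigl(\tfrac32+\log\int e^{2\phi}d\mu\bigr)\bigl(\sqrt H+\tfrac12 H\bigr)$ for $\phi\ge0$, and simply substitutes $\phi=\tfrac{\alpha}{2}|F|$, then takes the infimum over $\alpha$ to get $C(F)$. If you want a self-contained proof, you must reproduce Bolley--Villani's argument rather than Donsker--Varadhan plus calculus; otherwise, cite their Theorem~2.1 directly and carry out the substitution, as the paper does.
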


\begin{proof}
  The weighted CKP inequality of Bolley--Villani \cite[Theorem~2.1]{BolleyVillani.05} states that for any measurable $\phi:\X\to[0,\infty]$,
  $$
    \|\phi \,d(\mu'-\mu)\|_{TV}\leq C_{\phi} \left(\sqrt{H(\mu'|\mu)} + \frac12 H(\mu'|\mu)\right)
  $$
  where $C_{\phi}= \frac32 + \log \int e^{2\phi}\,d\mu$. The claim follows by choosing $\phi=\frac{\alpha}{2}|F|$ and optimizing over $\alpha>0$.
\end{proof}  
 
\begin{lemma}\label{le:stabilityEst}
  Consider $\mu,\mu'\in\cP(\X)$ with $H(\mu'|\mu)<\infty$ and $\nu,\nu'\in\cP(\Y)$ with $H(\nu'|\nu)<\infty$, as well as measurable functions $f,f':\X\to\R$ and $g,g':\Y\to\R$ with
    \begin{align*}
    C_{1}&:=\inf_{\alpha>0} \frac{2}{\alpha} \left(\frac32 + \log \int e^{\alpha |f'-f|}\,d\mu\right)<\infty, \\
    C_{2}&:=\inf_{\alpha>0} \frac{2}{\alpha} \left(\frac32 + \log \int e^{\alpha |g'-g|}\,d\nu\right)<\infty.
  \end{align*} 
  Let $\pi\in\Pi(\mu,\nu)$ be of the form $d\pi=e^{f\oplus g -c}\,d(\mu\otimes\nu)$.
  \begin{itemize}
  \item[(a)] If $\pi'\in\Pi(\mu',\nu')$ and $d\pi'=e^{f'\oplus g' -c}\,d(\mu'\otimes\nu')$,
  then
  \begin{align*}
    H(\pi'|\pi) + H(\pi|\pi')
  & \leq C_{1}\sqrt{H(\mu'|\mu)} + (1+C_{1}/2) H(\mu'|\mu) + H(\mu|\mu') \\
  & \quad + C_{2} \sqrt{H(\nu'|\nu)}+ (1+C_{2}/2)H(\nu'|\nu)  + H(\nu|\nu') .
\end{align*}  
  
  \item[(b)] If $\pi'\in\Pi(\mu',\nu')$ and $d\pi'=e^{f'\oplus g' -c}\,d(\mu\otimes\nu)$,
  then
  \begin{align*}
    H(\pi'|\pi) + H(\pi|\pi')
  & \leq C_{1}\left(\sqrt{H(\mu'|\mu)}+\frac12 H(\mu'|\mu) \right) \\
  & \quad + C_{2} \left(\sqrt{H(\nu'|\nu)} + \frac12 H(\nu'|\nu) \right).
\end{align*}  
\end{itemize}
If $\nu=\nu'$, the requirement that $C_{2}<\infty$ can be dropped.
\end{lemma}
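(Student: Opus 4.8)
The plan is to reduce both parts to a direct computation with the explicit densities, followed by a single invocation of \cref{le:BolleyVillani}. First I would dispose of trivial cases: if any relative entropy on the right-hand side is infinite, the asserted bound holds vacuously, so I may assume all of them finite. In case~(a) this forces $\mu\sim\mu'$ and $\nu\sim\nu'$ (from $H(\mu'|\mu),H(\mu|\mu')<\infty$, etc.); in case~(b), $\pi\sim\pi'$ holds automatically because both densities with respect to $\mu\otimes\nu$ are everywhere strictly positive. Either way $\pi\sim\pi'$, so the Radon--Nikodym derivatives below exist.

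Next I would expand the log-density. In case~(a),
\[
 \log\frac{d\pi'}{d\pi}(x,y) = (f'-f)(x) + (g'-g)(y) + \log\frac{d\mu'}{d\mu}(x) + \log\frac{d\nu'}{d\nu}(y),
\]
while in case~(b) the two Radon--Nikodym correction terms are absent. Integrating $\log\frac{d\pi'}{d\pi}$ against $\pi'$ (marginals $\mu',\nu'$) and $-\log\frac{d\pi'}{d\pi}$ against $\pi$ (marginals $\mu,\nu$), and using $\int\log\frac{d\mu'}{d\mu}\,d\mu'=H(\mu'|\mu)$ and $\int\log\frac{d\mu'}{d\mu}\,d\mu=-H(\mu|\mu')$ (and likewise for $\nu$), I would obtain in case~(a)
\[
 H(\pi'|\pi)+H(\pi|\pi') = \int(f'-f)\,d(\mu'-\mu) + \int(g'-g)\,d(\nu'-\nu) + H(\mu'|\mu)+H(\mu|\mu')+H(\nu'|\nu)+H(\nu|\nu'),
\]
and in case~(b) the same identity with the last four relative-entropy terms dropped. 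Then I would apply \cref{le:BolleyVillani} with $F=f'-f$ on $(\X,\mu)$ versus $\mu'$ to bound $\bigl|\int(f'-f)\,d(\mu'-\mu)\bigr|$ by $C_{1}\bigl(\sqrt{H(\mu'|\mu)}+\tfrac12 H(\mu'|\mu)\bigr)$, symmetrically for $g'-g$ with constant $C_{2}$, and collect terms; the resulting inequalities are exactly (a) and (b). For the final sentence: if $\nu=\nu'$ then $\int(g'-g)\,d(\nu'-\nu)=0$ and $H(\nu'|\nu)=H(\nu|\nu')=0$, so the entire ``$g$-block'' disappears from the identity and $C_{2}$ is never used.

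The one point requiring genuine care — the only real obstacle — is justifying the splitting of $H(\pi'|\pi)$ and $H(\pi|\pi')$ into the separate marginal integrals and their recombination, since a priori $\int(f'-f)\,d\mu'$ and $\int(f'-f)\,d\mu$ need not be finite. The resolution is that $C_{1}<\infty$ forces $e^{\alpha|f'-f|}\in L^{1}(\mu)$ for some $\alpha>0$, hence $f'-f\in L^{1}(\mu)$; the Bolley--Villani estimate then gives $\int|f'-f|\,d|\mu'-\mu|<\infty$, whence $f'-f\in L^{1}(\mu')$ as well, and similarly for $g'-g$ (or trivially, if $\nu=\nu'$). Once all four marginal integrals are known to be finite, both $H(\pi'|\pi)$ and $H(\pi|\pi')$ are finite and the rearrangement into $\int(f'-f)\,d(\mu'-\mu)+\int(g'-g)\,d(\nu'-\nu)$ plus marginal-entropy terms is legitimate, so the computation above goes through.
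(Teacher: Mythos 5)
Your proof of parts (a) and (b) under the hypothesis $C_1,C_2<\infty$ follows essentially the same route as the paper: expand $\log(d\pi'/d\pi)$, integrate against $\pi'$ and $\pi$ using the marginal constraints, add the two identities, and apply \cref{le:BolleyVillani}. Your remark on integrability via the exponential moment is sound for that regime, and the computation is correct.

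However, the handling of the final sentence (``If $\nu=\nu'$, the requirement that $C_2<\infty$ can be dropped'') is not adequate. You dismiss the $g$-terms as ``trivially'' disappearing, but the decomposition $H(\pi'|\pi)+H(\pi|\pi')=\int(f'-f)\,d(\mu'-\mu)+\int(g'-g)\,d(\nu'-\nu)+\cdots$ is only a legitimate rearrangement once you know each summand is well-defined, and in particular that $g'-g\in L^1(\nu)$. Without $C_2<\infty$ you have no exponential-moment control on $g'-g$, so this is precisely the point that requires an argument, not a shrug. The paper supplies one: in case (b), $\log(d\pi'/d\pi)=(f'-f)\oplus(g'-g)$ and $\log^-(d\pi'/d\pi)\in L^1(\pi')$ (since $x\mapsto x\log x$ is bounded below); combined with $f'-f\in L^1(\pi')$ (which you do have from $C_1<\infty$ and Bolley--Villani), this forces $(g'-g)^-\in L^1(\pi')$ and hence $(g'-g)^-\in L^1(\nu)$, since $g'-g$ depends only on $y$. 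Swapping the roles of $\pi$ and $\pi'$ gives $(g'-g)^+\in L^1(\nu)$, so $g'-g\in L^1(\nu)$, after which your cancellation does go through. Case (a) needs the same idea, together with the observation that one may assume $H(\mu|\mu')<\infty$ without loss of generality. Without this argument, your proof of the last assertion has a genuine gap: you cannot assert that the identity you derived continues to hold, because some of the integrals on the right-hand side could a priori fail to be well-defined.
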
 

\begin{proof}
  (a) In view of $C_{1}<\infty$ and $H(\mu'|\mu)<\infty$, we have $|f'-f|\in L^{1}(\mu')$ by the variational representation of $H(\mu'|\mu)$; cf.\ \cite[Equation~(1.4)]{Nutz.20}. Similarly, $|g'-g|\in L^{1}(\nu')$. Using the definition of relative entropy and $\pi'\in\Pi(\mu',\nu')$,
\begin{align*}
    &H(\pi'|\pi) \\
    & = \int \log \left( \frac{d\pi'}{d(\mu'\otimes\nu')} \frac{d(\mu\otimes\nu)}{d\pi} \frac{d(\mu'\otimes\nu')}{d(\mu\otimes\nu)} \right) d\pi' \\
    & =\int (f' -f) \,d\pi'+ \int (g' -g) \,d\pi' + \int \log \left(\frac{d\mu'}{d\mu}\right)\,d\pi' + \int \log \left(\frac{d\nu'}{d\nu}\right)\,d\pi'\\
    & = \int (f' -f) \,d\mu' + \int (g' -g) \,d\nu' + H(\mu'|\mu) + H(\nu'|\nu).
  \end{align*}  
  A symmetric expression holds for $H(\pi|\pi')$; note that $|f'-f|\in L^{1}(\mu)$ and $|g'-g|\in L^{1}(\nu)$ due to $C_{1}+C_{2}<\infty$ and thus $H(\mu|\mu') + H(\nu|\nu')=\infty$ if and only if $H(\pi|\pi')=\infty$. Adding the two expressions and applying \cref{le:BolleyVillani},
  \begin{align*}
    H(\pi'|\pi) + H(\pi|\pi')
  & = H(\mu'|\mu) + H(\mu|\mu') + H(\nu'|\nu)  + H(\nu|\nu') \\
  & \quad +  \int (f' -f) \,d(\mu'-\mu) + \int (g' -g) \,d(\nu'-\nu)\\
  & \leq C_{1}\sqrt{H(\mu'|\mu)}  + (1+C_{1}/2) H(\mu'|\mu) + H(\mu|\mu')\\
  & \quad + C_{2} \sqrt{H(\nu'|\nu)}+ (1+C_{2}/2)H(\nu'|\nu)  + H(\nu|\nu').
\end{align*} 
(b) Similarly as in (a),
  \begin{align*}
    H(\pi'|\pi) 
    & = \int \log \left( \frac{d\pi'}{d(\mu\otimes\nu)} \frac{d(\mu\otimes\nu)}{d\pi} \right) d\pi' \\
    & =\int (f' -f) \,d\pi'+ \int (g' -g) \,d\pi' \\
    & = \int (f' -f) \,d\mu' + \int (g' -g) \,d\nu'
  \end{align*} 
  leads to
  \begin{align*}
    H(\pi'|\pi) + H(\pi|\pi')
  & = \int (f' -f) \,d(\mu'-\mu) + \int (g' -g) \,d(\nu'-\nu),
\end{align*} 
and now the claim again follows via \cref{le:BolleyVillani}. 

Regarding the last assertion, suppose that $\nu=\nu'$. The above calculations go through (with $0* \infty:=0$) if we can ensure that $g'-g\in L^{1}(\nu)$. For a function~$G$ depending only on $y\in\Y$, clearly $G\in L^{1}(\nu)$ is equivalent to $G\in L^{1}(\pi)$ and to $G\in L^{1}(\pi')$. In general, note that if $\pi'\ll\pi$, then $\log(d\pi'/d\pi)^{-}\in L^{1}(\pi')$ as $x\mapsto x\log x$ is bounded from below. Consider first~(b). Here $\pi'\sim \pi$ and  $\log(d\pi'/d\pi) = (f' -f)\oplus (g' -g)$, and as $(f' -f)\in L^{1}(\mu+\mu')$ thanks to $C_{1}<\infty$, it follows that $(g' -g)^{-}\in L^{1}(\pi')$ and hence $(g' -g)^{-}\in L^{1}(\nu)$ as $g',g$ depend only on $y\in\Y$. Interchanging the roles of $\pi,\pi'$ yields $(g -g')^{-}\in L^{1}(\pi)$ and hence $(g' -g)^{+}\in L^{1}(\nu)$. In summary, $g'-g'\in L^{1}(\nu)$ as desired. The argument for~(a) is similar after noting that we may assume $H(\mu|\mu')<\infty$ without loss of generality.
\end{proof}

In view of~\eqref{eq:potentials}, \cref{le:stabilityEst} entails the following stability result for the optimal coupling which is of independent interest.

\begin{theorem}[Stability]\label{th:stability}
  Consider $(\mu,\nu),(\mu',\nu')\in\cP(\X)\times\cP(\Y)$ such that the associated EOT problems~\eqref{eq:EOT} are finite. Let $\pi\in\Pi(\mu,\nu)$, $\pi'\in\Pi(\mu',\nu')$ be the respective solutions and $(\varphi,\psi)$, $(\varphi',\psi')$ associated potentials. Then
  \begin{align*}
    H(\pi'|\pi) + H(\pi|\pi')
  & \leq C_{1}\sqrt{H(\mu'|\mu)} + (1+C_{1}/2) H(\mu'|\mu) + H(\mu|\mu') \\
  & \quad + C_{2} \sqrt{H(\nu'|\nu)} + (1+C_{2}/2)H(\nu'|\nu)  + H(\nu|\nu')
\end{align*}  
  where 
  \begin{align*}
    C_{1}:=\inf_{\alpha>0} \frac{2}{\alpha} \left(\frac32 + \log \int e^{\alpha |\varphi'-\varphi|}\,d\mu\right), \quad\!\!
    C_{2}:=\inf_{\alpha>0} \frac{2}{\alpha} \left(\frac32 + \log \int e^{\alpha |\psi'-\psi|}\,d\nu\right)\!.
  \end{align*} 
\end{theorem}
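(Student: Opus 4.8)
The proof is essentially an application of \cref{le:stabilityEst}(a) to the optimal couplings. The plan is as follows.

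First I would recall from the background (equation~\eqref{eq:potentials}) that the optimal couplings admit the representations $d\pi = e^{\varphi\oplus\psi - c}\,d(\mu\otimes\nu)$ and $d\pi' = e^{\varphi'\oplus\psi' - c}\,d(\mu'\otimes\nu')$. This is exactly the structural form required by \cref{le:stabilityEst}(a), with $(f,g) = (\varphi,\psi)$, $(f',g') = (\varphi',\psi')$, $\mu' \otimes \nu'$ as the product reference for $\pi'$, and the constants $C_1,C_2$ as defined in the statement coinciding with those of the lemma.

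The only genuine gap to fill is that \cref{le:stabilityEst}(a) presumes $H(\mu'|\mu) < \infty$, $H(\nu'|\nu) < \infty$, and $C_1, C_2 < \infty$, whereas \cref{th:stability} assumes only that the two EOT problems are finite. So the argument splits: if any one of $H(\mu'|\mu)$, $H(\nu'|\nu)$, $H(\mu|\mu')$, $H(\nu|\nu')$ is infinite, then the claimed upper bound is $+\infty$ and there is nothing to prove; likewise if $C_1 = \infty$ or $C_2 = \infty$. In the remaining case all four marginal entropies and both constants are finite, and \cref{le:stabilityEst}(a) applies verbatim to give precisely the asserted inequality. One should also observe that finiteness of the two EOT problems is what guarantees the potentials $(\varphi,\psi)$, $(\varphi',\psi')$ exist and are measurable with the stated density formulas, so the hypotheses of the theorem are consistent with invoking the lemma.

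I do not anticipate a serious obstacle here; the theorem is a corollary of the preceding lemma, and the only care needed is the bookkeeping around the $\infty$ cases so that the statement is correct without extra integrability hypotheses. The remark that "if $\nu = \nu'$ the requirement $C_2 < \infty$ can be dropped" carries over automatically from the lemma and could be noted in passing, though it is not needed for the stated theorem.

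\begin{proof}
  By~\eqref{eq:potentials}, the optimal couplings satisfy $d\pi = e^{\varphi\oplus\psi-c}\,d(\mu\otimes\nu)$ and $d\pi' = e^{\varphi'\oplus\psi'-c}\,d(\mu'\otimes\nu')$. If $H(\mu'|\mu)$, $H(\mu|\mu')$, $H(\nu'|\nu)$ or $H(\nu|\nu')$ is infinite, or if $C_{1}=\infty$ or $C_{2}=\infty$, the right-hand side of the claimed inequality equals $+\infty$ and there is nothing to prove. Otherwise, all these quantities are finite and \cref{le:stabilityEst}(a), applied with $(f,g)=(\varphi,\psi)$ and $(f',g')=(\varphi',\psi')$, yields exactly the stated bound.
\end{proof}
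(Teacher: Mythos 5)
Your proof is essentially the paper's own argument: the paper states \cref{th:stability} as an immediate consequence of \cref{le:stabilityEst}(a) applied via~\eqref{eq:potentials}, with no further elaboration, and you have supplied exactly that reduction together with the routine bookkeeping for the $\infty$ cases. One tiny imprecision: if $C_{1}=\infty$ but $H(\mu'|\mu)=0$ (i.e.\ $\mu'=\mu$), the term $C_{1}\sqrt{H(\mu'|\mu)}$ is the indeterminate form $\infty\cdot 0$, so the right-hand side is not automatically $+\infty$; that corner case is covered instead by the final assertion of \cref{le:stabilityEst} (dropping the finiteness requirement on the constant when the corresponding marginals coincide), applied with the roles of $\mu$ and $\nu$ interchanged. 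This does not change the substance of the argument.
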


\begin{remark}\label{rk:multimarginal}
  \Cref{le:stabilityEst} and \cref{th:stability} generalize to the multi-marginal context where $(\mu,\nu)$, $(\mu',\nu')$ are replaced by $(\mu_{1},\dots,\mu_{N})$, $(\mu'_{1},\dots,\mu'_{N})$ with corresponding constants $C_{1},\dots,C_{N}$.
\end{remark}

\begin{remark}\label{rk:sublinearWhy}
This is an informal remark about the general approach. Neglecting higher-order terms, \cref{th:stability} is a H\"older-type estimate where the left-hand side is a relative entropy and the right-hand is the square-root of a relative entropy with some constant in front. To obtain a Lipschitz estimate along these lines, one would need to replace the constant by a term involving another square-root of relative entropy. That additional step is indeed possible in the bounded setting of~\cite{DeligiannidisDeBortoliDoucet.21}. There, it is shown that the potentials are Lipschitz continuous with respect to the marginals as a map from 1-Wasserstein space to the space of continuous functions with uniform metric, and by Pinsker's inequality, the 1-Wasserstein distance is further bounded by a square-root of relative entropy. In our more general setting, that step is not available, and we shall merely estimate the constant to be finite. At a high level, that is why we obtain H\"older stability and (in the next section) sublinear convergence, rather than Lipschitz stability and linear convergence.
\end{remark}

The following lemma is based on a similar calculation as \cref{le:stabilityEst}; it will be used to bound the dual suboptimality in \cref{co:suboptimality}. We recall the notation~\eqref{eq:defR}.
 
\begin{lemma}\label{le:valueFunEstimate}
  Consider $\mu,\mu'\in\cP(\X)$ with $H(\mu'|\mu)<\infty$ and $\nu,\nu'\in\cP(\Y)$ with $H(\nu'|\nu)<\infty$, as well as measurable functions $f,f':\X\to\R$ and $g,g':\Y\to\R$ with
    \begin{align*}
    \tilde{C}_{1}&:=\inf_{\alpha>0} \frac{2}{\alpha} \left(\frac32 + \log \int e^{\alpha |f'|}\,d\mu\right)<\infty, \\
    \tilde{C}_{2}&:=\inf_{\alpha>0} \frac{2}{\alpha} \left(\frac32 + \log \int e^{\alpha |g'|}\,d\nu\right)<\infty.
  \end{align*} 
  Let $\pi\in\Pi(\mu,\nu)$ and $\pi'\in\Pi(\mu',\nu')$ be of the form
  \begin{align*}
     d\pi=e^{f\oplus g -c}\,d(\mu\otimes\nu), \quad d\pi'=e^{f'\oplus g' -c}\,d(\mu\otimes\nu).
  \end{align*} 
  Then we have $H(\pi'|R)<\infty$ and
\begin{align*}
    H(\pi|R) - H(\pi'|R)  
  & \leq  H(\pi|\pi') + \tilde{C}_{1}\left(\sqrt{H(\mu'|\mu)}+\frac12 H(\mu'|\mu) \right) \\
  & \quad + \tilde{C}_{2} \left(\sqrt{H(\nu'|\nu)} + \frac12 H(\nu'|\nu) \right).
\end{align*} 
If $\nu=\nu'$ and $(f,g)\in L^{1}(\mu)\times L^{1}(\nu)$, the requirement that $\tilde{C}_{2}<\infty$ can be dropped.
\end{lemma}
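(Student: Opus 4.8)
The plan is to run the same computation as in the proof of \cref{le:stabilityEst}, but now keeping track of the reference measure~$R$ from~\eqref{eq:defR}. Since $dR=\xi^{-1}e^{-c}\,d(\mu\otimes\nu)$ while $d\pi=e^{f\oplus g-c}\,d(\mu\otimes\nu)$ and $d\pi'=e^{f'\oplus g'-c}\,d(\mu\otimes\nu)$, the densities $d\pi/dR=\xi\,e^{f\oplus g}$ and $d\pi'/dR=\xi\,e^{f'\oplus g'}$ are strictly positive, so $\pi\sim\pi'\sim R$ and, $\pi$-a.s.,
$$
  \log\frac{d\pi}{dR}=f\oplus g+\log\xi,\quad \log\frac{d\pi'}{dR}=f'\oplus g'+\log\xi,\quad \log\frac{d\pi}{d\pi'}=(f-f')\oplus(g-g').
$$

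First I would dispatch integrability. By the variational representation of relative entropy (\cite[Eq.~(1.4)]{Nutz.20}), $\tilde{C}_{1}<\infty$ provides some $\alpha>0$ with $e^{\alpha|f'|}\in L^{1}(\mu)$; combined with $H(\mu'|\mu)<\infty$ this gives $f'\in L^{1}(\mu)\cap L^{1}(\mu')$, and symmetrically $g'\in L^{1}(\nu)\cap L^{1}(\nu')$. Since $\pi'$ has marginals $(\mu',\nu')$, it follows that $f'\oplus g'+\log\xi\in L^{1}(\pi')$, whence $H(\pi'|R)=\mu'(f')+\nu'(g')+\log\xi$; in particular $H(\pi'|R)<\infty$ (which also follows from $H(\pi'|R)\geq 0$), proving the first assertion.

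For the inequality I would use the decomposition $\log\frac{d\pi}{dR}=\log\frac{d\pi}{d\pi'}+\log\frac{d\pi'}{dR}$. Its second summand, $f'\oplus g'+\log\xi$, is $\pi$-integrable because $f'\in L^{1}(\mu)$, $g'\in L^{1}(\nu)$ and $\pi$ has marginals $(\mu,\nu)$, so integrating against~$\pi$ is legitimate and yields the identity $H(\pi|R)=H(\pi|\pi')+\mu(f')+\nu(g')+\log\xi$, valid in $(-\infty,\infty]$; if the right-hand side is $+\infty$ the asserted bound is trivial, so assume all quantities finite. Subtracting $H(\pi'|R)=\mu'(f')+\nu'(g')+\log\xi$ cancels $\log\xi$ and leaves
$$
  H(\pi|R)-H(\pi'|R)=H(\pi|\pi')-\int f'\,d(\mu'-\mu)-\int g'\,d(\nu'-\nu),
$$
after which applying \cref{le:BolleyVillani} to $F=f'$ (with the pair $\mu,\mu'$) and to $F=g'$ (with $\nu,\nu'$)—whose constants $C(f')$ and $C(g')$ are exactly $\tilde{C}_{1}$ and $\tilde{C}_{2}$—bounds the last two integrals and finishes the proof.

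For the refinement when $\nu=\nu'$ the $g'$ integral above vanishes, so $\tilde{C}_{2}$ is no longer needed to control it; its only remaining roles were to guarantee $g'\in L^{1}(\nu)$ and the finiteness of $H(\pi'|R)$, which I would instead recover from $(f,g)\in L^{1}(\mu)\times L^{1}(\nu)$ exactly as in the last paragraph of the proof of \cref{le:stabilityEst}: here $H(\pi|R)=\mu(f)+\nu(g)+\log\xi$ is finite, and since the negative part of any log-density is integrable and $f,f'\in L^{1}(\mu)$, the representations $g'=\log\frac{d\pi'}{dR}-f'-\log\xi$ and $g-g'=\log\frac{d\pi}{d\pi'}-(f-f')$ give $(g')^{-}\in L^{1}(\nu)$ and $(g-g')^{-}\in L^{1}(\nu)$, so $\int g'\,d\nu$ and $H(\pi|\pi')$ are well-defined in $(-\infty,\infty]$; expanding $H(\pi|\pi')=\mu(f-f')+\nu(g)-\int g'\,d\nu\geq 0$ then forces $\int g'\,d\nu<\infty$, hence $H(\pi'|R)<\infty$, and the computation goes through verbatim with $H(\nu'|\nu)=0$. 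I expect this integrability bookkeeping—keeping $H(\pi|\pi')$, $H(\pi'|R)$ and the marginal integrals well-defined (or the bound vacuous) without presupposing $f\in L^{1}(\mu)$ or $g\in L^{1}(\nu)$ in the general case—to be the only delicate point, the decomposition $\log\frac{d\pi}{dR}=\log\frac{d\pi}{d\pi'}+\log\frac{d\pi'}{dR}$ being precisely what lets one avoid those hypotheses.
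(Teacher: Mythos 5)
Your proposal is correct and takes essentially the same approach as the paper: you derive the identity $H(\pi|R)-H(\pi'|R)=H(\pi|\pi')-\int f'\,d(\mu'-\mu)-\int g'\,d(\nu'-\nu)$ via the chain rule $\log\tfrac{d\pi}{dR}=\log\tfrac{d\pi}{d\pi'}+\log\tfrac{d\pi'}{dR}$, handle integrability through $\tilde C_i<\infty$ and the variational formula, apply \cref{le:BolleyVillani}, and for the $\nu=\nu'$ case recover $g'\in L^1(\nu)$ from the nonnegativity of the log-density negative parts together with $(f,g)\in L^1(\mu)\times L^1(\nu)$. The only cosmetic deviations from the paper's proof are that the paper computes $H(\pi'|R)+H(\pi|\pi')$ directly rather than integrating the chain-rule decomposition against $\pi$, and in the final assertion obtains $(g')^+\in L^1(\nu)$ directly from $\log(d\pi/d\pi')^-\in L^1(\pi)$ rather than via the $H(\pi|\pi')\geq 0$ contradiction you use; both routes are sound.
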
 

\begin{proof}
  In view of $H(\mu'|\mu)<\infty$ and $H(\nu'|\nu)<\infty$, the finiteness of $\tilde{C}_{1}, \tilde{C}_{2}$ implies $f'\in L^{1}(\mu+\mu')$ and $g'\in L^{1}(\nu+\nu')$, and in particular $H(\pi'|R)<\infty$. Using the definition of relative entropy, \eqref{eq:defR}, the stated form of $d\pi$ and $d\pi'$, and $\pi'\in\Pi(\mu',\nu')$,
  \begin{align*}
    H(\pi'|R) + H(\pi|\pi')
    & = \log \xi + \int f' \oplus g' \,d\pi' + \int (f-f')\oplus(g-g') \,d\pi\\
    & = \int f' \oplus g' \,d(\pi'-\pi) + \log \xi + \int f \oplus g \,d\pi\\
    & = \int f' \,d(\mu'-\mu) + \int g' \,d(\nu'-\nu) + H(\pi|R).
  \end{align*}
  We see that $H(\pi|\pi')=\infty$ if and only if $H(\pi|R)=\infty$, and in that case the claim is trivial. In the finite case, the claim follows by rearranging the above and using \cref{le:BolleyVillani} to estimate the integrals. 
  
  Turning to the last assertion, let $\nu=\nu'$ and $(f,g)\in L^{1}(\mu)\times L^{1}(\nu)$. As  $f'\in L^{1}(\mu+\mu')$ due to $\tilde{C}_{1}<\infty$, the fact that $\log(d\pi'/dR)^{-}\in L^{1}(\pi')$ implies $(g')^{-}\in L^{1}(\pi')$ and hence $(g')^{-}\in L^{1}(\nu)$. Whereas $\log(d\pi/d\pi')^{-}\in L^{1}(\pi)$ implies $(-g')^{-}\in L^{1}(\pi)$ and hence $(g')^{+}\in L^{1}(\nu)$. Thus $g'\in L^{1}(\nu)$ and now the above calculation goes through as stated.
\end{proof}

\section{Analysis of Sinkhorn's Algorithm}\label{se:Sink}

Recall that the entropic optimal transport problem~\eqref{eq:EOT} was assumed to be finite for the given marginals~$(\mu,\nu)$, that $\pi_{*}\in\Pi(\mu,\nu)$ denotes its unique optimizer and that $(\varphi_{*},\psi_{*})$ are associated potentials~\eqref{eq:potentials}. Recall also the primal Sinkhorn iterates $\pi_{t}$ defined in~\eqref{eq:SinkPrimal}, especially that $\pi_{2t}\in\Pi(\mu_{2t},\nu)$ for $t\geq0$, and the definitions of the dual iterates $(\varphi_{t},\psi_{t})$ in~\eqref{eq:SinkDual}.

The following lemma records two important monotonicity properties; the first one well known and the second due to~\cite{Leger.21}. See \cite[Proposition~6.5]{Nutz.20} and \cite[Proposition~6.10]{Nutz.20} for proofs in our setting.

\begin{lemma}\label{le:SinkMonotonicities}
  \begin{itemize}
  \item[(i)] The sequence $\{H(\pi_{*}|\pi_{t})\}_{t\geq-1}$ is decreasing.
  \item[(ii)] For all $t\geq0$,
  \begin{align*}
    &H(\mu_{2t}|\mu) \geq H(\nu|\nu_{2t+1}) \geq H(\mu_{2t+2}|\mu) \geq H(\nu|\nu_{2t+3})\geq\dots, \\
    &H(\mu|\mu_{2t}) \geq H(\nu_{2t+1}|\nu) \geq H(\mu|\mu_{2t+2})\geq H(\nu_{2t+3}|\nu)\geq\dots .
  \end{align*}
  In particular, 
  $\{H(\mu_{2t}|\mu)\}_{t\geq0}$ and $\{H(\mu|\mu_{2t})\}_{t\geq0}$ %
  are decreasing.
  \end{itemize} 
\end{lemma}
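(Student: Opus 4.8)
The plan is to obtain both statements from two elementary facts about information projections, using only the closed forms \eqref{eq:primalDual}--\eqref{eq:SinkMarginalDensities}. Note first that $H(\pi_{*}|\pi_{-1})=H(\pi_{*}|R)=\cC_{1}(\mu,\nu)+\log\xi<\infty$ and that all iterates are mutually equivalent, $\pi_{*}\sim\mu\otimes\nu\sim\pi_{t}$. For~(i) I would invoke Csisz\'ar's generalized Pythagorean identity. The even step $\pi_{2t}=\argmin_{\Pi(\ast,\nu)}H(\,\cdot\,|\pi_{2t-1})$ is the $I$-projection of $\pi_{2t-1}$ onto the linear family $\{\sigma:\ \sigma \text{ has } \Y\text{-marginal }\nu\}$, and by \eqref{eq:primalDual}--\eqref{eq:SinkMarginalDensities} its density reweights $\pi_{2t-1}$ by a function of~$y$ only, namely $d\pi_{2t}/d\pi_{2t-1}=h(y):=d\nu/d\nu_{2t-1}$. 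Hence for any $\sigma$ with $\Y$-marginal $\nu$, splitting $\log(d\sigma/d\pi_{2t-1})=\log(d\sigma/d\pi_{2t})+\log h(y)$ and integrating against~$\sigma$ gives
$$H(\sigma|\pi_{2t-1})=H(\sigma|\pi_{2t})+\textstyle\int\log h\,d\nu=H(\sigma|\pi_{2t})+H(\pi_{2t}|\pi_{2t-1}),$$
the last step because $\pi_{2t}$ has $\Y$-marginal $\nu$. Taking $\sigma=\pi_{*}\in\Pi(\mu,\nu)$ yields $H(\pi_{*}|\pi_{2t-1})-H(\pi_{*}|\pi_{2t})=H(\pi_{2t}|\pi_{2t-1})\ge0$; the odd step is symmetric, with the family of measures having $\X$-marginal~$\mu$, which also contains~$\pi_{*}$. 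Chaining over~$t$ proves~(i) and, as a byproduct, shows $H(\pi_{t}|\pi_{t-1})$ equals a difference of consecutive (finite) terms of $\{H(\pi_{*}|\pi_{t})\}$.

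For~(ii) the key is a quartet of identities read off from \eqref{eq:primalDual}--\eqref{eq:SinkMarginalDensities}. Since $d\pi_{2t+1}/d\pi_{2t}=e^{\varphi_{t+1}-\varphi_{t}}=d\mu/d\mu_{2t}$ depends only on~$x$, integrating the logarithm of this ratio against $\pi_{2t}$ resp.\ $\pi_{2t+1}$ and using that their $\X$-marginals are $\mu_{2t}$ resp.\ $\mu$ gives $H(\pi_{2t}|\pi_{2t+1})=H(\mu_{2t}|\mu)$ and $H(\pi_{2t+1}|\pi_{2t})=H(\mu|\mu_{2t})$; symmetrically, $d\pi_{2t+2}/d\pi_{2t+1}=e^{\psi_{t+1}-\psi_{t}}=d\nu/d\nu_{2t+1}$ depends only on~$y$ and gives $H(\pi_{2t+1}|\pi_{2t+2})=H(\nu_{2t+1}|\nu)$ and $H(\pi_{2t+2}|\pi_{2t+1})=H(\nu|\nu_{2t+1})$. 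Now I would apply the data-processing inequality, i.e.\ monotonicity of relative entropy under marginalization: projecting $(\pi_{2t},\pi_{2t+1})$ to~$\Y$ gives marginals $(\nu,\nu_{2t+1})$, so $H(\mu_{2t}|\mu)=H(\pi_{2t}|\pi_{2t+1})\ge H(\nu|\nu_{2t+1})$; projecting $(\pi_{2t+2},\pi_{2t+1})$ to~$\X$ gives $H(\nu|\nu_{2t+1})=H(\pi_{2t+2}|\pi_{2t+1})\ge H(\mu_{2t+2}|\mu)$; projecting $(\pi_{2t+1},\pi_{2t})$ to~$\Y$ gives $H(\mu|\mu_{2t})=H(\pi_{2t+1}|\pi_{2t})\ge H(\nu_{2t+1}|\nu)$; and projecting $(\pi_{2t+1},\pi_{2t+2})$ to~$\X$ gives $H(\nu_{2t+1}|\nu)=H(\pi_{2t+1}|\pi_{2t+2})\ge H(\mu|\mu_{2t+2})$. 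These four inequalities interleave precisely into the two displayed chains, and the ``in particular'' claims are then immediate since $H(\mu_{2t}|\mu)\ge H(\nu|\nu_{2t+1})\ge H(\mu_{2t+2}|\mu)$ and $H(\mu|\mu_{2t})\ge H(\nu_{2t+1}|\nu)\ge H(\mu|\mu_{2t+2})$.

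Finiteness of the entropies in~(ii) follows by combining~(i) with the above identities ($H(\mu|\mu_{2t})$ and $H(\nu|\nu_{2t-1})$ are differences of consecutive terms of the finite sequence $\{H(\pi_{*}|\pi_{t})\}$, and the chains propagate this), with any remaining boundary case ($t=0$) read off from~\eqref{eq:SinkMarginalDensities}; alternatively one simply invokes \cite[Propositions~6.5 and~6.10]{Nutz.20} and~\cite{Leger.21}. I do not expect a substantial analytic obstacle: the only points needing care are bookkeeping ones — justifying that the $I$-projection reweights by a single-coordinate function (transparent from~\eqref{eq:primalDual}, or via the Lagrangian optimality conditions for the minimizations in~\eqref{eq:SinkPrimal}), matching indices across the four ratio identities, and keeping the $0\cdot\infty=0$ convention consistent in the Pythagorean identity.
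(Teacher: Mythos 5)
Your proposal is correct and is essentially the same argument as the one the paper relies on: the paper proves this lemma only by citation to \cite[Propositions~6.5 and~6.10]{Nutz.20} and \cite{Leger.21}, and those proofs proceed exactly as you do — the Pythagorean identity for the $I$-projections (noting the density ratio depends on one coordinate only) for part~(i), and the identification of $H(\pi_{t}|\pi_{t\pm1})$ with marginal entropies combined with the data-processing inequality for part~(ii). Your index bookkeeping and the finiteness/convention remarks are all in order.
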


Our main results hinge on the following simple yet crucial observation.

\begin{lemma}\label{le:sinkDiff}
  For all $t\geq0$, 
  \begin{align*}
    H(\pi_{*}|\pi_{2t+2})- H(\pi_{*}|\pi_{2t})
    & = - [H(\mu|\mu_{2t}) + H(\nu|\nu_{2t+1})] \\
    & \leq - [H(\mu_{2t+2}|\mu) + H(\mu|\mu_{2t+2})]\\
    & \leq - H(\mu_{2t+2}|\mu).
  \end{align*} 
\end{lemma}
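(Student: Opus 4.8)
The plan is to compute $H(\pi_*|\pi_{2t+2}) - H(\pi_*|\pi_{2t})$ directly using the closed-form densities~\eqref{eq:primalDual}, and then identify the result with marginal entropies via the Schrödinger system and \eqref{eq:SinkMarginalDensities}. Concretely, since $\pi_*\sim\mu\otimes\nu$ with $d\pi_*/d(\mu\otimes\nu)=e^{\varphi_*\oplus\psi_*-c}$ and $d\pi_{2t}/d(\mu\otimes\nu)=e^{\varphi_t\oplus\psi_t-c}$, the ratio $d\pi_*/d\pi_{2t}=e^{(\varphi_*-\varphi_t)\oplus(\psi_*-\psi_t)}$, and similarly $d\pi_*/d\pi_{2t+2}=e^{(\varphi_*-\varphi_{t+1})\oplus(\psi_*-\psi_{t+1})}$. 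Hence, using that $\pi_*$ has marginals $(\mu,\nu)$,
\[
  H(\pi_*|\pi_{2t+2})-H(\pi_*|\pi_{2t}) = \int(\varphi_t-\varphi_{t+1})\,d\mu + \int(\psi_t-\psi_{t+1})\,d\nu.
\]
The first step is to justify this computation rigorously — checking the relevant integrability so that the logarithm of the density ratio splits as claimed, which follows since $\varphi_t,\varphi_{t+1}\in L^1(\mu)$ and $\psi_t,\psi_{t+1}\in L^1(\nu)$ as noted before~\eqref{eq:SinkMeansIncrease1}, and similarly $\varphi_*\in L^1(\mu)$, $\psi_*\in L^1(\nu)$ (here one uses $\mu(\varphi_*)+\nu(\psi_*)=\cC_1(\mu,\nu)$ together with lower bounds on the potentials).

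The second step is to recognize each integral as a marginal relative entropy. From~\eqref{eq:SinkMarginalDensities}, $d\mu_{2t}/d\mu = e^{\varphi_t-\varphi_{t+1}}$, so $\int(\varphi_t-\varphi_{t+1})\,d\mu = \int \log(d\mu_{2t}/d\mu)\,d\mu = -H(\mu|\mu_{2t})$ — note the integration is against $\mu$, not $\mu_{2t}$, which produces the \emph{reverse} entropy. For the $\psi$-term I would shift the index: $\pi_{2t+1}$ has density $e^{\varphi_{t+1}\oplus\psi_t-c}\,d(\mu\otimes\nu)$ with marginals $(\mu,\nu_{2t+1})$, and $d\nu_{2t+1}/d\nu = e^{\psi_t-\psi_{t+1}}$, so $\int(\psi_t-\psi_{t+1})\,d\nu = -H(\nu|\nu_{2t+1})$. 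This gives the claimed identity
\[
  H(\pi_*|\pi_{2t+2})-H(\pi_*|\pi_{2t}) = -[H(\mu|\mu_{2t})+H(\nu|\nu_{2t+1})].
\]

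The remaining two inequalities are then immediate bookkeeping. By \cref{le:SinkMonotonicities}(ii), $H(\nu|\nu_{2t+1})\geq H(\mu_{2t+2}|\mu)$, so the bracketed sum is at least $H(\mu|\mu_{2t}) + H(\mu_{2t+2}|\mu)$; and again by \cref{le:SinkMonotonicities}(ii), $H(\mu|\mu_{2t})\geq H(\mu|\mu_{2t+2})$ (that sequence is decreasing), which upgrades the bound to $H(\mu_{2t+2}|\mu)+H(\mu|\mu_{2t+2})$, and dropping the nonnegative term $H(\mu|\mu_{2t+2})$ gives the final line. The only genuine obstacle is the integrability justification in the first step — specifically confirming that no indeterminate $\infty-\infty$ arises when splitting $H(\pi_*|\pi_{2t+2}) - H(\pi_*|\pi_{2t})$ — but since all four potential sequences lie in the appropriate $L^1$ spaces and the differences telescope against the finite-entropy marginals, this is routine; everything else is a direct substitution plus two invocations of the Léger monotonicity from \cref{le:SinkMonotonicities}.
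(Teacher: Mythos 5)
Your proof is correct and follows essentially the same route as the paper: expand the log-density ratio against $\pi_*$, use $\pi_*\in\Pi(\mu,\nu)$ to reduce to marginal integrals $\mu(\varphi_t-\varphi_{t+1})+\nu(\psi_t-\psi_{t+1})$, identify these with $-H(\mu|\mu_{2t})$ and $-H(\nu|\nu_{2t+1})$ via~\eqref{eq:SinkMarginalDensities}, then invoke the two monotonicity inequalities from \cref{le:SinkMonotonicities}(ii). Your extra discussion of integrability and the absence of an indeterminate $\infty-\infty$ is sound and merely spells out what the paper leaves implicit.
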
 

\begin{proof}
  Using the definitions of $\pi_{2t+2}$, $\pi_{2t}$ followed by $\pi^{*}\in\Pi(\mu,\nu)$ and~\eqref{eq:SinkMarginalDensities},
  \begin{align*}
    -H(\pi_{*}&|\pi_{2t+2})+H(\pi_{*}|\pi_{2t}) \\
    & =  \int (\varphi_{t+1}\oplus\psi_{t+1}) - (\varphi_{*}\oplus\psi_{*}) \,d\pi_{*}
       + \int (\varphi_{*}\oplus\psi_{*}) -(\varphi_{t}\oplus\psi_{t})  \,d\pi_{*} \\
    & = \mu(\varphi_{t+1}-\varphi_{t}) + \nu(\psi_{t+1}-\psi_{t}) 
     = H(\mu|\mu_{2t}) + H(\nu|\nu_{2t+1}),
  \end{align*}   
  which is the claimed identity.
  By \cref{le:SinkMonotonicities} we have $H(\mu|\mu_{2t}) \geq H(\mu|\mu_{2t+2})$ and $H(\nu|\nu_{2t+1}) \geq H(\mu_{2t+2}|\mu)$, showing the first inequality. The second inequality is trivial.
\end{proof}

While we have not addressed the convergence of $H(\pi_{*}|\pi_{2t})$ yet, \cref{le:sinkDiff} entails that \emph{the convergence rate of the marginals is at least one order faster.} This remarkable fact holds in full generality, without the moment conditions to be imposed below for our main results.

\begin{proposition}\label{pr:marginalRate}
  For all $t\geq0$,
  $$
    H(\mu_{2t}|\mu) + H(\mu|\mu_{2t}) \leq \frac{2H(\pi_{*}|\pi_{2\lfloor t/2 \rfloor})
}{ t }.
  $$
\end{proposition}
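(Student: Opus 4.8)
The plan is to exploit the telescoping structure already visible in \cref{le:sinkDiff}. Summing the identity there over $t$ shows that $\sum_{t\geq0} H(\mu_{2t+2}|\mu) \leq H(\pi_{*}|\pi_{0}) < \infty$, and more usefully, summing the sharper two-sided form gives $\sum_{t\geq0} \bigl[H(\mu|\mu_{2t}) + H(\nu|\nu_{2t+1})\bigr] = H(\pi_{*}|\pi_{0}) - \lim_s H(\pi_{*}|\pi_{2s}) \leq H(\pi_{*}|\pi_{0})$, using that $H(\pi_{*}|\pi_{2s})\geq 0$. The point is that a sum of nonnegative terms is finite, so the terms cannot all be large; combined with the monotonicity in \cref{le:SinkMonotonicities}(ii), which says the relevant sequences are \emph{decreasing}, a finite sum of $k$ decreasing nonnegative terms forces the $k$-th term to be at most (sum)$/k$.

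Concretely, I would fix $t$ and work with the block of indices from $\lfloor t/2\rfloor$ up to roughly $t$. By \cref{le:sinkDiff}, for each $s$ in this range, $H(\mu|\mu_{2s}) + H(\nu|\nu_{2s+1}) = H(\pi_{*}|\pi_{2s}) - H(\pi_{*}|\pi_{2s+2})$, so summing from $s=\lfloor t/2\rfloor$ to $s = t-1$ telescopes to $H(\pi_{*}|\pi_{2\lfloor t/2\rfloor}) - H(\pi_{*}|\pi_{2t}) \leq H(\pi_{*}|\pi_{2\lfloor t/2\rfloor})$. That sum has $t - \lfloor t/2\rfloor = \lceil t/2\rceil$ terms. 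Now I use that within this range the quantities are decreasing: by \cref{le:SinkMonotonicities}(ii), both $H(\mu|\mu_{2s})$ and $H(\nu|\nu_{2s+1})$ are at least $H(\mu|\mu_{2t})$ (resp.\ at least $H(\mu_{2t}|\mu)$, reading off the first chain of inequalities: $H(\mu_{2s}|\mu)\geq H(\nu|\nu_{2s+1})\geq H(\mu_{2s+2}|\mu)\geq\cdots\geq H(\mu_{2t}|\mu)$). So each of the $\lceil t/2\rceil$ summands is bounded below by $H(\mu_{2t}|\mu) + H(\mu|\mu_{2t})$ — wait, I need the $H(\nu|\nu_{2s+1})$ term to dominate $H(\mu_{2t}|\mu)$ and the $H(\mu|\mu_{2s})$ term to dominate $H(\mu|\mu_{2t})$; both follow from the two monotone chains in \cref{le:SinkMonotonicities}(ii), keeping in mind $s \leq t-1$ so $2s+1 \leq 2t-1$ and $2s \leq 2t-2$, and the chains extend down to index $2t$. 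Hence $\lceil t/2\rceil \cdot \bigl(H(\mu_{2t}|\mu) + H(\mu|\mu_{2t})\bigr) \leq H(\pi_{*}|\pi_{2\lfloor t/2\rfloor})$, and since $\lceil t/2\rceil \geq t/2$ we get the stated bound $H(\mu_{2t}|\mu) + H(\mu|\mu_{2t}) \leq 2H(\pi_{*}|\pi_{2\lfloor t/2\rfloor})/t$.

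I expect the only delicate point to be bookkeeping with the floor/ceiling and making sure the monotone chains of \cref{le:SinkMonotonicities}(ii) are invoked at valid indices — specifically that for every summation index $s$ with $\lfloor t/2\rfloor \leq s \leq t-1$ we genuinely have $H(\mu|\mu_{2s}) \geq H(\mu|\mu_{2t})$ and $H(\nu|\nu_{2s+1}) \geq H(\mu_{2t}|\mu)$. Both are immediate from the displayed chains once one notes $2s \leq 2(t-1) < 2t$ and that the chains are infinite and decreasing. A minor alternative, if one prefers to avoid the ceiling, is to sum over $\lfloor t/2\rfloor \leq s \leq \lfloor t/2\rfloor + \lceil t/2 \rceil - 1$, i.e.\ exactly $\lceil t/2\rceil$ indices ending at $t-1$ when $t$ is even and at $t-1$ as well when $t$ is odd; in all cases the count is $\geq t/2$ and the top index is $\leq t-1$, which is all that is needed. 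There is no real analytic obstacle here — it is a clean consequence of telescoping plus monotonicity, which is presumably why the authors call \cref{le:sinkDiff} ``simple yet crucial.''
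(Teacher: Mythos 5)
Your proof is correct and takes essentially the same approach as the paper: both sum the telescoping increments from \cref{le:sinkDiff} over (roughly) the range $\lfloor t/2\rfloor$ to $t-1$ and then lower-bound each of the $\lceil t/2\rceil$ summands by $H(\mu_{2t}|\mu)+H(\mu|\mu_{2t})$ via the monotonicity in \cref{le:SinkMonotonicities}(ii). The only cosmetic difference is that the paper works directly with $a_{2s}:=H(\mu_{2s}|\mu)+H(\mu|\mu_{2s})$, noting it is decreasing and satisfies $a_{2s}\leq H(\pi_*|\pi_{2s-2})-H(\pi_*|\pi_{2s})$, whereas you keep the equality form of the increment and invoke the finer chains; after reindexing the two arguments coincide.
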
 

\begin{proof}
  \cref{le:sinkDiff} shows that $a_{2s} := H(\mu_{2s}|\mu) + H(\mu|\mu_{2s})$ satisfies
  \begin{align*}
    a_{2s}\leq H(\pi_{*}|\pi_{2s-2})- H(\pi_{*}|\pi_{2s}), \quad s\geq1.
  \end{align*} 
  As $(a_{2s})$ is decreasing by \cref{le:SinkMonotonicities}, it follows for 
  any $m\geq0$ that
  $$
   m a_{4m} \leq \sum_{s=m+1}^{2m} a_{2s} \leq  H(\pi_{*}|\pi_{2m}) - H(\pi_{*}|\pi_{4m}) \leq H(\pi_{*}|\pi_{2m}),
  $$
  showing the claim for $t=2m$. Similarly,
  $$
   (m+1) a_{4m+2} \leq \sum_{s=m+1}^{2m+1} a_{2s} \leq H(\pi_{*}|\pi_{2m}) - H(\pi_{*}|\pi_{4m+2}) \leq H(\pi_{*}|\pi_{2m})
  $$
  implies the claim for $t=2m+1$.
\end{proof}

In view of $H(\pi_{*}|\pi_{2\lfloor t/2 \rfloor})\leq H(\pi_{*}|R)<\infty$,  \cref{pr:marginalRate} already includes that  the marginal entropies converge at least with rate~$O(t^{-1})$. This will be improved to~$O(t^{-2})$ in \cref{co:reverseEntropyRate} below.

From now on, suppose that
\begin{align}\label{eq:C1inMainText}
    C_{1}&:=\sup_{t\geq0}\inf_{\alpha>0} \frac{2}{\alpha} \left(\frac32 + \log \int e^{\alpha |\varphi_{t}-\varphi_{*}|}\,d\mu\right)<\infty;
\end{align} 
sufficient conditions will be given in Section~\ref{se:biconjugates} (especially \cref{co:iterateBound}).
As~$\pi_{2t}$ and~$\pi_{*}$ have the common second marginal~$\nu$, \cref{le:stabilityEst} then yields
\begin{align}\label{eq:stabilityAppliedSink}
    H(\pi_{2t}|\pi_{*}) + H(\pi_{*}|\pi_{2t})
  & \leq C_{1} \left( \sqrt{H(\mu_{2t}|\mu)} + \frac12 H(\mu_{2t}|\mu)\right), \quad t\geq0.
\end{align} 
We can now state our main result. Its presentation includes a slight nuisance: to state a non-asymptotic bound, we need to account for both terms in~\eqref{eq:stabilityAppliedSink}, even though the square-root is clearly the asymptotically dominating one. This is the reason for the time~$t_{1}$ below. While the geometric convergence~(a) holds in the first few iterations before~$t_{1}$, the key result is~(b) establishing the $O(t^{-1})$ rate. See \cref{rk:HRbound} for more details on the constants and in particular their dependence on the regularization parameter~$\eps$.

\begin{theorem}\label{th:SinkhornRate}
  Let $t_{0}=\inf\{t\geq0: H(\mu_{2t}|\mu)\leq 1\}$ and $t_{1}=(t_{0}-1)\vee0$ and
  $$
    \kappa = (\tfrac32 C_{1})^{-1} \wedge (2H(\pi_{*}|R))^{-1/2}.
  $$
  Then $t_{1}$ admits the bound $t_{1}\leq ((t_{2}\wedge t_{3})-1)^{+}$, where
 \begin{align*}
   t_{2} &:= \lceil H(\pi_{*}|R) - H(\pi_{0}|R)\rceil, \\
   t_{3} &:= \inf \{t\in\N: \, \lfloor t/2 \rfloor\log(1+\kappa) + \log t \geq \log(2H(\pi_{*}|\pi_{0}))\}.
 \end{align*} 
  (a) For $0\leq t< t_{1}$,
  \begin{align*}%
    H(\pi_{*}|\pi_{2t}) 
    &\leq  H(\pi_{*}|\pi_{0}) (1+\kappa)^{-t} \\
    &\leq  H(\pi_{*}|R) (1+\kappa)^{-t}.
\end{align*} 
  (b) For $t\geq t_{1}$,
  \begin{align}
    H(\pi_{*}|\pi_{2t}) 
    &\leq \frac{1}{H(\pi_{*}|\pi_{2t_{1}})^{-1}+ \frac12 \kappa^{2}  (t-t_{1})} \nonumber\\
    &\leq \frac{1}{H(\pi_{*}|R)^{-1}+ \frac12 \kappa^{2}  (t-t_{1})} \nonumber\\
    &\leq 5 \frac{ C_{1}^{2} \vee H(\pi_{*}|R)}{t-t_{1}}. \label{eq:SinkhornRate2}
  \end{align}
\end{theorem}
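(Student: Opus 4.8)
The plan is to combine the one-step improvement inequality from \cref{le:sinkDiff} with the stability estimate \eqref{eq:stabilityAppliedSink} to obtain a scalar difference inequality for the sequence $h_{t}:=H(\pi_{*}|\pi_{2t})$, and then analyze that inequality by elementary means, distinguishing the regime where the linear term in \eqref{eq:stabilityAppliedSink} dominates (yielding geometric decay) from the regime where the square-root term dominates (yielding the $O(t^{-1})$ rate). Concretely, \cref{le:sinkDiff} gives $h_{t}-h_{t+1}\geq H(\mu_{2t+2}|\mu)$, and \eqref{eq:stabilityAppliedSink} applied at time $t+1$ gives $h_{t+1}\leq C_{1}(\sqrt{H(\mu_{2t+2}|\mu)}+\tfrac12 H(\mu_{2t+2}|\mu))$. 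For $t\geq t_{1}$ we have $H(\mu_{2t+2}|\mu)\leq 1$ (by definition of $t_{0}$ and monotonicity, \cref{le:SinkMonotonicities}(ii)), so $\sqrt{H(\mu_{2t+2}|\mu)}+\tfrac12 H(\mu_{2t+2}|\mu)\leq \tfrac32\sqrt{H(\mu_{2t+2}|\mu)}$, hence $h_{t+1}\leq \tfrac32 C_{1}\sqrt{H(\mu_{2t+2}|\mu)}\leq \tfrac32 C_{1}\sqrt{h_{t}-h_{t+1}}$. Squaring, $h_{t}-h_{t+1}\geq (\tfrac32 C_{1})^{-2}h_{t+1}^{2}\geq \kappa^{2}h_{t+1}^{2}$ once we also note $h_{t+1}\leq h_{-1}=H(\pi_{*}|R)$ so that $h_{t+1}^{2}\geq \tfrac12 \kappa^{2}$-type comparisons hold — actually the cleaner route is $h_{t}-h_{t+1}\geq \kappa^{2}h_{t+1}^{2}$ directly from $\kappa\leq(\tfrac32 C_{1})^{-1}$. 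Dividing by $h_{t}h_{t+1}$ gives $h_{t+1}^{-1}-h_{t}^{-1}\geq \kappa^{2}h_{t+1}/h_{t}\geq \tfrac12\kappa^{2}$, where the last step uses $h_{t+1}/h_{t}\geq \tfrac12$; this in turn follows because $h_{t}-h_{t+1}\geq \kappa^{2}h_{t+1}^{2}$ and $h_{t+1}\leq H(\pi_{*}|R)\leq (2\kappa^{2})^{-1}$ (from $\kappa\leq(2H(\pi_{*}|R))^{-1/2}$), giving $h_{t}\leq h_{t+1}(1+\kappa^{2}h_{t+1})\leq h_{t+1}(1+\tfrac12)= \tfrac32 h_{t+1}$, hence $h_{t+1}/h_{t}\geq \tfrac23>\tfrac12$. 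Summing the telescoping inequality $h_{t+1}^{-1}-h_{t}^{-1}\geq\tfrac12\kappa^{2}$ from $t_{1}$ to $t-1$ yields $h_{t}^{-1}\geq h_{t_{1}}^{-1}+\tfrac12\kappa^{2}(t-t_{1})$, which is the first bound in (b); replacing $h_{t_{1}}^{-1}\geq h_{-1}^{-1}=H(\pi_{*}|R)^{-1}$ gives the second. For the final clean bound \eqref{eq:SinkhornRate2}, substitute $\kappa^{-2}=(\tfrac32 C_{1})^{2}\vee(2H(\pi_{*}|R))$ and bound $\tfrac12\kappa^{2}\geq \tfrac{1}{2\cdot((\tfrac32 C_1)^2\vee 2H(\pi_*|R))}\geq \tfrac{1}{5(C_1^2\vee H(\pi_*|R))}$ (since $(\tfrac32)^2=\tfrac94<\tfrac52\cdot 2$ and $2<5$), and drop the nonnegative term $H(\pi_{*}|\pi_{2t_1})^{-1}$; this gives $h_{t}\leq 5(C_{1}^{2}\vee H(\pi_{*}|R))/(t-t_{1})$.

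For part (a), in the regime $0\leq t<t_{1}$ we have $H(\mu_{2t+2}|\mu)>1$ for the relevant indices (again by definition of $t_{0}$ and the monotonicity in \cref{le:SinkMonotonicities}(ii)), so in \eqref{eq:stabilityAppliedSink} the linear term dominates: $\sqrt{H(\mu_{2t+2}|\mu)}+\tfrac12 H(\mu_{2t+2}|\mu)\leq \tfrac32 H(\mu_{2t+2}|\mu)$, giving $h_{t+1}\leq \tfrac32 C_{1}H(\mu_{2t+2}|\mu)\leq \tfrac32 C_{1}(h_{t}-h_{t+1})$. Rearranging, $h_{t+1}(1+(\tfrac32 C_{1})^{-1})\leq (\tfrac32 C_1)^{-1} h_t + h_{t+1}$... more directly $h_{t+1}\leq \tfrac{3C_1/2}{1+3C_1/2}h_t = (1+(\tfrac32 C_1)^{-1})^{-1}h_t\leq (1+\kappa)^{-1}h_t$ since $\kappa\leq(\tfrac32 C_1)^{-1}$ and $s\mapsto(1+s)^{-1}$ is decreasing. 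Iterating from $t=0$ gives $h_{t}\leq(1+\kappa)^{-t}h_{0}\leq(1+\kappa)^{-t}H(\pi_{*}|R)$, using $h_0\leq h_{-1}=H(\pi_*|R)$ from \cref{le:SinkMonotonicities}(i).

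It remains to justify the explicit bound $t_{1}\leq((t_{2}\wedge t_{3})-1)^{+}$. For $t_2$: in the geometric regime $0\le t<t_1$ we just showed $h_{t+1}\le\tfrac32 C_1(h_t-h_{t+1})$ and also $h_t-h_{t+1}\ge H(\mu_{2t+2}|\mu)>1$, so $h_0-h_t=\sum_{s=0}^{t-1}(h_s-h_{s+1})> t$, i.e. $t< h_0-h_t\le H(\pi_*|R)-\inf_s h_s$; bounding $\inf_s h_s\ge$... actually the cleaner statement: for $t<t_1$ one has $H(\pi_*|R)\ge h_{-1}\ge h_0> h_0 - h_t + h_t> t + 0$, and since $h_{t_1}\ge h_{t_1}$ hmm — the intended bound is $t_1\le \lceil H(\pi_*|R)-H(\pi_0|R)\rceil$, which I would obtain by noting $H(\pi_{2t}|R)$ is nondecreasing in $t$ (standard, since each Sinkhorn step is an entropic projection decreasing $H(\cdot|R)$ toward... one must be careful with direction) and that each step before $t_1$ increases $H(\pi_{2t}|R)$ by at least the marginal entropy $>1$; summing gives $t_1\le H(\pi_*|R)-H(\pi_0|R)$ up to rounding. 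For $t_3$: combine the geometric bound (a), valid up to $t_1$, namely $h_{t_1-1}\le H(\pi_*|\pi_0)(1+\kappa)^{-(t_1-1)}$, with the fact that $h_{t}>$ something forcing $H(\mu_{2t}|\mu)>1$; more precisely, if at time $t$ we still have $H(\mu_{2t}|\mu)>1$ then from (the proof of) \cref{pr:marginalRate}-type telescoping or directly from $h_{t-1}\ge \kappa^{-?}$, one gets $2h_{2\lfloor t/2\rfloor}/t\ge 1$ hence $t\le 2h_0\le 2H(\pi_*|\pi_0)$, and cross-referencing with the geometric decay yields $\lfloor t/2\rfloor\log(1+\kappa)+\log t<\log(2H(\pi_*|\pi_0))$ for all $t<t_1$, which is exactly $t_1\le t_3$. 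I expect the main obstacle to be precisely this bookkeeping for $t_1$ — getting the monotonicity direction of $H(\pi_{2t}|R)$ right and correctly chaining \cref{pr:marginalRate} with part (a) to produce the stated $t_2,t_3$ — whereas the core difference-inequality argument for (a) and (b) is robust and essentially forced by \cref{le:sinkDiff} and \eqref{eq:stabilityAppliedSink}.
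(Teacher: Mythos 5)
The overall strategy is the same as the paper's: combine the one-step drop from \cref{le:sinkDiff} with the stability estimate \eqref{eq:stabilityAppliedSink}, split into the two regimes determined by $H(\mu_{2t}|\mu)\lessgtr 1$, and analyze the resulting scalar difference inequalities. Part (a) and the constants bookkeeping for \eqref{eq:SinkhornRate2} match the paper. The main differences are in (b) and in the bounds on $t_1$.

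In (b), instead of the paper's route (dominating $F(t)=H(\pi_*|\pi_{2t})$ by the exact solution $G$ of $G(t+1)-G(t)=-\kappa^2 G(t+1)^2$, then dominating $G$ by the explicit $S$), you pass directly to reciprocals and telescope $h_{t+1}^{-1}-h_t^{-1}\geq \tfrac12\kappa^2$. That is a cleaner-looking route, \emph{but your justification of $h_{t+1}/h_t\geq \tfrac23$ has a sign error}. From $h_t-h_{t+1}\geq\kappa^2 h_{t+1}^2$ you get $h_t\geq h_{t+1}(1+\kappa^2 h_{t+1})$, not $h_t\leq h_{t+1}(1+\kappa^2 h_{t+1})$ as you wrote; the inequality you have only gives an \emph{upper} bound on $h_{t+1}/h_t$, which is useless here. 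The difference inequality alone does not force $h_{t+1}/h_t\geq\tfrac12$, so this step genuinely fails as written. It can be repaired (e.g.\ by a case split: if $h_{t+1}\geq\tfrac12 h_t$ you are done; if $h_{t+1}<\tfrac12 h_t$ then $h_{t+1}^{-1}-h_t^{-1}=\tfrac{h_t-h_{t+1}}{h_t h_{t+1}}>\tfrac{1}{2h_{t+1}}\geq\tfrac{1}{2H(\pi_*|R)}\geq\kappa^2\geq\tfrac12\kappa^2$), or by the paper's comparison-to-$G$ argument, which sidesteps the issue because it bounds the ratio for the dominating sequence $G$ via $G(t+1)^2\leq G(t)G(t_1)$. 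As it stands, your (b) is not a proof.

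On the bounds for $t_1$: your account is admittedly sketchy. For $t_2$, the paper invokes the a priori bound $H(\mu_{2t}|\mu)\leq(H(\pi_*|R)-H(\pi_0|R))/t$ from \cite[Corollary~6.12]{Nutz.20}; your alternative of summing $h_s-h_{s+1}>1$ over $0\leq s<t_1$ does work once you note the Pythagorean identity $H(\pi_*|\pi_0)=H(\pi_*|R)-H(\pi_0|R)$ (since $\pi_0$ is the $H(\cdot|R)$-projection onto $\Pi(*,\nu)$ and $\pi_*\in\Pi(*,\nu)$), giving $t_1<H(\pi_*|\pi_0)=H(\pi_*|R)-H(\pi_0|R)$ and hence $t_1\leq(t_2-1)^+$. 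But you did not carry this through. For $t_3$, the paper argues by contradiction: if $t_3<t_0$ then $\lfloor t_3/2\rfloor<t_1$, so part (a) applies at time $\lfloor t_3/2\rfloor$; combining with \cref{pr:marginalRate} and the definition of $t_3$ forces $H(\mu_{2t_3}|\mu)\leq 1$, contradicting $t_3<t_0$. Your sketch is in the right spirit but does not actually close the loop.

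In summary: the difference-inequality framework, the derivation of \eqref{eq:diffIneqForSink}, and part (a) are correct and match the paper. Part (b) contains a real sign error in the $h_{t+1}/h_t$ estimate that must be fixed (case split or the paper's $G$/$S$ comparison). The $t_1$ bounds are incomplete, missing the citation/Pythagorean step for $t_2$ and the contradiction argument for $t_3$.
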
 

\begin{remark}[On the Constants]\label{rk:HRbound}
Recall that the general problem~\eqref{eq:epsOT} involves a regularization parameter $\eps>0$. While we have assumed $\eps=1$ in the above results, the general case is readily recovered by applying the results to the cost $c/\eps$ instead of $c$. In particular, we can  track the dependence of the constants on $\eps$ as follows.

  (a) Suppose for simplicity that $c\geq0$. Then \eqref{eq:entropyMin} yields the explicit upper bound
  \begin{align*}
    H(\pi_{*}|R)=\cC_{1}(\mu,\nu) +  \log\xi \leq \cC_{1}(\mu,\nu) \leq \int c \,d(\mu\otimes\nu)
  \end{align*} 
  as $\pi:=\mu\otimes\nu$ is an admissible control in~\eqref{eq:EOT}. Translating back to~\eqref{eq:epsOT}, we observe that this bound scales linearly in the regularization~$\eps^{-1}$.
  
  (b) When using the bounds for $\varphi_{t},\varphi_{*}$ from \cref{se:biconjugates}, the constant~$C_{1}$ of~\eqref{eq:C1inMainText} scales at most linearly in $\eps^{-p}$ for typical examples of costs; e.g.,  $c(x,y)=d(x,y)^{p}$ with $p\geq1$ (see \cref{th:unifBiconjugBound}). In particular, when~$\eps$ is small, $C_{1}$ is the dominating term in $C_{1}^{2} \vee H(\pi_{*}|R)$ and in the definition of~$\kappa$. And most importantly, \emph{the constant in~\eqref{eq:SinkhornRate2} grows at most like $\eps^{-2p}$.}
\end{remark} 

\begin{proof}[Proof of Theorem~\ref{th:SinkhornRate}]
We first prove the upper bounds for $H(\pi_{*}|\pi_{2t})$.
As $H(\mu_{2t}|\mu)$ is monotone (\cref{le:SinkMonotonicities}), we have 
$H(\mu_{2t}|\mu)\leq 1$ for $t\geq t_{0}$ and $H(\mu_{2t}|\mu)\geq 1$ for $0\leq t < t_{0}$. Consequently, \eqref{eq:stabilityAppliedSink} yields
\begin{align*}
    H(\pi_{2t}|\pi_{*}) + H(\pi_{*}|\pi_{2t}) \leq 
    \begin{cases}
      \tfrac32 C_{1} H(\mu_{2t}|\mu), & \quad 0\leq t< t_{0}, \\
      \tfrac32 C_{1}  \sqrt{H(\mu_{2t}|\mu)}, &\quad t\geq t_{0}.
    \end{cases}
\end{align*}
With $t_{1}:=(t_{0}-1)\vee0$ it follows that for any $0<\kappa\leq(\tfrac32 C_{1})^{-1}$,
\begin{align*}
    H(\mu_{2t+2}|\mu) \geq 
    \begin{cases}
       \kappa H(\pi_{*}|\pi_{2t+2}), & \quad 0\leq t< t_{1}, \\
       \kappa^{2} H(\pi_{*}|\pi_{2t+2})^{2}, &\quad t\geq t_{1}.
    \end{cases}
\end{align*}
Combining this with the last inequality in \cref{le:sinkDiff} yields
  \begin{align}\label{eq:diffIneqForSink}
    H(\pi_{*}|\pi_{2t+2})- H(\pi_{*}|\pi_{2t})
    \leq -    \begin{cases}
       \kappa H(\pi_{*}|\pi_{2t+2}), & \quad 0\leq t< t_{1}, \\
       \kappa^{2} H(\pi_{*}|\pi_{2t+2})^{2}, &\quad t\geq t_{1}
    \end{cases}
  \end{align} 
  which we can analyze through the lens of difference equations.
  
(a) Consider $0\leq t< t_{1}$ and set $F(t):=H(\pi_{*}|\pi_{2t})$. As $\kappa\leq(\tfrac32 C_{1})^{-1}$, \eqref{eq:diffIneqForSink} shows
  $$
   F(t+1)-F(t) \leq - \kappa F(t+1).
  $$ 
  We see by induction that~$F$ is dominated by the unique solution $G$ of the difference equation
  $$
    G(t+1)-G(t) = - \kappa G(t+1), \quad 0\leq t< t_{1}; \quad G(0)=F(0).
  $$
  This solution is explicitly given by
  $$
    G(t)= F(0) (1+\kappa)^{-t}, \quad 0\leq t< t_{1}.
  $$
  The second inequality follows as $F(0)= H(\pi_{*}|\pi_{0})\leq H(\pi_{*}|\pi_{-1})=H(\pi_{*}|R)$ by \cref{le:SinkMonotonicities}.  
  
(b) Let $t\geq t_{1}$. Now \eqref{eq:diffIneqForSink} reads
  $$
   F(t+1)-F(t) \leq - \kappa^{2} F(t+1)^{2}, \quad t\geq t_{1}.
  $$ 
  By an elementary analysis and induction, we see that~$F(t)$, $t\geq t_{1}$ is dominated by the unique nonnegative solution~$G$ of the difference equation
  $$
    G(t+1)-G(t) = - \kappa^{2} G(t+1)^{2}, \quad t\geq t_{1}; \quad  G(t_{1})=F(t_{1}).
  $$
  Clearly~$G$ is decreasing: $G(t+1)\leq G(t) \leq G(t_{1})=F(t_{1})$ and hence
  $$
    G(t+1) = G(t) - \kappa^{2} G(t+1)^{2}\geq G(t) - \kappa^{2} F(t_{1})G(t)=\beta G(t)
  $$
  where $\beta:=1-\kappa^{2} F(t_{1})>0$ due to 
  $\kappa^{2}\leq [2H(\pi_{*}|R)]^{-1}\leq [2F(t_{1})]^{-1}$.
   It follows that~$G$ is in turn dominated by the unique solution $S$ of
  $$
    S(t+1)-S(t) = - \gamma S(t)S(t+1), \quad t\geq t_{1}; \quad  S(t_{1})=F(t_{1})
  $$  
  with $\gamma:=\kappa^{2}\beta=\kappa^{2}(1-\kappa^{2} F(t_{1}))$, namely
  $$
    S(t)=\frac{1}{F(t_{1})^{-1}+\gamma (t-t_{1})}, \quad t\geq t_{1}.
  $$
  Using again $\kappa^{2}\leq [2F(t_{1})]^{-1}$, we have $\gamma=\kappa^{2}(1-\kappa^{2} F(t_{1}))\geq \kappa^{2}/2$, yielding the first claim. The other inequalities follow with $F(t_{1})^{-1}\geq H(\pi_{*}|R)^{-1} \geq0$, again by \cref{le:SinkMonotonicities}.
  
  (c) It remains to prove the bounds for $t_{1}$. With $A:=H(\pi_{*}|R) - H(\pi_{0}|R)$, it holds that $H(\mu_{2t}|\mu) \leq A/t$ for all $t\geq1$; cf.\ \cite[Corollary~6.12]{Nutz.20}. Hence $t_{0}\leq \lceil A \rceil=t_{2}$ and thus $t_{1}\leq (t_{2}-1)^{+}$.
On the other hand, suppose for contradiction that $t_{3}<t_{0}$. As $t_{3}\geq 2$, this implies $\lfloor t_{3}/2 \rfloor < t_{0}-1\leq t_{1}$. Using \cref{pr:marginalRate}, the bound $H(\pi_{*}|\pi_{2t})\leq  H(\pi_{*}|\pi_{0}) (1+\kappa)^{-t}$ for $0\leq t\leq t_{0}$ as proved in~(a), and the definition of $t_{3}$, we deduce
  $$
    H(\mu_{2t_{3}}|\mu) \leq 2t_{3}^{-1} H(\pi_{*}|\pi_{2\lfloor t_{3}/2 \rfloor}) \leq 2t_{3}^{-1} H(\pi_{*}|\pi_{0}) (1+\kappa)^{-\lfloor t_{3}/2 \rfloor}\leq 1,
  $$
  contradicting $t_{3}<t_{0}$.
\end{proof} 

While the above analysis crucially depended on the particular monotonicity properties of~$H(\pi_{*}|\pi_{2t})$, we can now deduce rates for the other expressions: the obtained rate $O(t^{-1})$ for $H(\pi_{*}|\pi_{2t})$ implies the rate $O(t^{-2})$ for the marginals entropies $H(\mu_{2t}|\mu)$ and $H(\mu|\mu_{2t})$, and through the latter, we can further deduce that the reverse entropy $H(\pi_{2t}|\pi_{*})$ admits the same rate $O(t^{-1})$ as $H(\pi_{*}|\pi_{2t})$.

\begin{corollary}\label{co:reverseEntropyRate}
  For $t\geq 2t_{1}$,
  \begin{align*}
    H(\mu_{2t}|\mu) + H(\mu|\mu_{2t})
    &\leq 10 \frac{ C_{1}^{2} \vee H(\pi_{*}|R)}{(\lfloor t/2 \rfloor-t_{1})t} = O(t^{-2}), \\
    H(\pi_{2t}|\pi_{*}) + H(\pi_{*}|\pi_{2t})
  & \leq  5 \frac{ C_{1}^{2} \vee (H(\pi_{*}|R)^{1/2}C_{1})}{\sqrt{(\lfloor t/2 \rfloor-t_{1})t}} = O(t^{-1}).
  \end{align*} 
\end{corollary}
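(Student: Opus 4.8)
The plan is to chain together the estimates already established. For the first line, I start from \cref{pr:marginalRate}, which gives
\[
  H(\mu_{2t}|\mu) + H(\mu|\mu_{2t}) \leq \frac{2 H(\pi_{*}|\pi_{2\lfloor t/2\rfloor})}{t}.
\]
Since $t \geq 2t_1$ forces $\lfloor t/2\rfloor \geq t_1$, I may apply part (b) of \cref{th:SinkhornRate} with $\lfloor t/2\rfloor$ in place of $t$, namely $H(\pi_{*}|\pi_{2\lfloor t/2\rfloor}) \leq 5\,(C_1^2 \vee H(\pi_{*}|R))/(\lfloor t/2\rfloor - t_1)$. Substituting yields the factor $10\,(C_1^2 \vee H(\pi_{*}|R))/((\lfloor t/2\rfloor - t_1)\,t)$, and since $\lfloor t/2\rfloor - t_1 \geq 1$ for $t \geq 2(t_1+1)$ (and the edge case $t=2t_1$ is handled trivially or absorbed) this is $O(t^{-2})$.

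For the second line, I return to inequality \eqref{eq:stabilityAppliedSink},
\[
  H(\pi_{2t}|\pi_{*}) + H(\pi_{*}|\pi_{2t}) \leq C_1\Bigl(\sqrt{H(\mu_{2t}|\mu)} + \tfrac12 H(\mu_{2t}|\mu)\Bigr),
\]
and feed in the bound on $H(\mu_{2t}|\mu)$ just obtained. The dominating term is the square root: $\sqrt{H(\mu_{2t}|\mu)} \leq \sqrt{10}\,\sqrt{C_1^2 \vee H(\pi_{*}|R)}/\sqrt{(\lfloor t/2\rfloor - t_1)\,t}$, so $C_1\sqrt{H(\mu_{2t}|\mu)}$ contributes $\sqrt{10}\,(C_1^2 \vee H(\pi_{*}|R)^{1/2}C_1)/\sqrt{(\lfloor t/2\rfloor - t_1)\,t}$ after distributing $C_1$ inside the square root of the max. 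The linear term $\tfrac{C_1}{2}H(\mu_{2t}|\mu)$ is $O(t^{-2})$, hence negligible relative to the $O(t^{-1})$ square-root term, and a crude bound (e.g.\ using $H(\mu_{2t}|\mu)\leq 1$ for $t\geq 2t_1 \geq t_0$ by the definition of $t_0$ and monotonicity from \cref{le:SinkMonotonicities}, so $\tfrac12 H(\mu_{2t}|\mu)\leq \tfrac12\sqrt{H(\mu_{2t}|\mu)}$) lets me fold it into the square-root term at the cost of adjusting the constant. Tracking the numerical constants so that everything fits under the stated factor $5$ is the only delicate bookkeeping; one checks $\sqrt{10}\cdot(1+\tfrac12) = \tfrac{3\sqrt{10}}{2} < 5$.

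I do not expect a genuine obstacle here — the corollary is a bootstrap consequence of results proved earlier in the section. The one point requiring a little care is the bookkeeping of constants and the handling of the floor function $\lfloor t/2\rfloor$ together with the boundary regime $t$ near $2t_1$, to make sure $\lfloor t/2\rfloor - t_1$ is positive and the denominators in the stated bounds make sense; I would dispatch this by noting that for $t \geq 2t_1$ one has $\lfloor t/2\rfloor - t_1 \geq 0$, with equality only at $t = 2t_1$ or $t=2t_1+1$, and either state the bound for $t > 2t_1$ or interpret $1/0$ as $+\infty$ so the inequality is vacuous there.
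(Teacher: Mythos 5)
Your proof follows exactly the paper's route: combine Proposition~\ref{pr:marginalRate} with Theorem~\ref{th:SinkhornRate}(b) (applied at time $\lfloor t/2\rfloor$) for the first line, then feed the resulting bound on $H(\mu_{2t}|\mu)$ into \eqref{eq:stabilityAppliedSink} for the second. The constant tracking is correct (including $C_1\sqrt{C_1^2\vee H(\pi_*|R)} = C_1^2\vee (C_1 H(\pi_*|R)^{1/2})$ and $\tfrac{3\sqrt{10}}{2}<5$); the only small blemish is the assertion ``$2t_1\geq t_0$,'' which can fail when $t_0=1$, but in every case where the denominator $\lfloor t/2\rfloor - t_1$ is positive one does have $t\geq 2t_1+2\geq t_0$, so the absorption of the linear term into the square-root term goes through exactly as you intend.
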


\begin{proof}
  The first claim follows by combining \cref{pr:marginalRate} and \cref{th:SinkhornRate}, and then the second claim follows via~\eqref{eq:stabilityAppliedSink}.
\end{proof}  

We can state a similar result for $\int (\varphi_{*}\oplus\psi_{*} - \varphi_{t}\oplus\psi_{t})\, d(\mu\otimes\nu)$, measuring the ``suboptimality'' of $(\varphi_{t},\psi_{t})$ in the dual problem of~\eqref{eq:EOT}.

\begin{corollary}\label{co:suboptimality}
  Let
  \begin{align}
    \tilde{C}_{1}&:=\sup_{t\geq0}\inf_{\alpha>0} \frac{2}{\alpha} \left(\frac32 + \log \int e^{\alpha |\varphi_{t}|}\,d\mu\right)<\infty \label{eq:C1tilde}
  \end{align} 
  and $\bar{C}_{1}:=C_{1}+\tilde{C}_{1}$. 
  Then for all $t\geq 2t_{1}$,
  \begin{align*}
    0\leq \int (\varphi_{*}\oplus\psi_{*} - \varphi_{t}\oplus\psi_{t})\, d(\mu\otimes\nu)
      & \leq  5 \frac{ C_{1}\bar{C_{1}} \wedge (H(\pi_{*}|R)^{1/2}\bar{C_{1}})}{\sqrt{(\lfloor t/2 \rfloor-t_{1})t}} = O(t^{-1}).
  \end{align*}      
\end{corollary}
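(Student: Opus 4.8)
The plan is to combine the value-function estimate of \cref{le:valueFunEstimate} with the already-established rates. First observe that the dual suboptimality has a clean primal interpretation: writing $d\pi_{2t}=e^{\varphi_{t}\oplus\psi_{t}-c}\,d(\mu\otimes\nu)$ and $d\pi_{*}=e^{\varphi_{*}\oplus\psi_{*}-c}\,d(\mu\otimes\nu)$, one has $\int(\varphi_{*}\oplus\psi_{*}-\varphi_{t}\oplus\psi_{t})\,d(\mu\otimes\nu)=\int\log(d\pi_{*}/d\pi_{2t})\,d(\mu\otimes\nu)$, and since $\pi_{*}$ and $R$ share a marginal structure one recognizes this as $H(\pi_{*}|R)-H(\pi_{2t}|R)$ up to the $\log\xi$ bookkeeping already set up in~\eqref{eq:entropyMin}--\eqref{eq:defR}; alternatively, it equals the difference of dual values, which is nonnegative because $(\varphi_{*},\psi_{*})$ is optimal for the dual problem. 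This gives the lower bound $0\leq\int(\varphi_{*}\oplus\psi_{*}-\varphi_{t}\oplus\psi_{t})\,d(\mu\otimes\nu)$ for free.

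For the upper bound I would apply \cref{le:valueFunEstimate} with $\pi=\pi_{*}$, $\pi'=\pi_{2t}$, $f=\varphi_{*}$, $g=\psi_{*}$, $f'=\varphi_{t}$, $g'=\psi_{t}$, and $\mu'=\mu_{2t}$, $\nu'=\nu$. Since $\nu_{2t}=\nu$ here, the last sentence of \cref{le:valueFunEstimate} lets me drop the requirement $\tilde C_{2}<\infty$, which is essential because we have no control on moments of $\psi_{t}$. The lemma then yields
\begin{align*}
  H(\pi_{*}|R)-H(\pi_{2t}|R)\leq H(\pi_{*}|\pi_{2t})+\tilde C_{1}\Bigl(\sqrt{H(\mu_{2t}|\mu)}+\tfrac12 H(\mu_{2t}|\mu)\Bigr),
\end{align*}
and after identifying the left side with the dual suboptimality, the right side is exactly a sum of quantities whose rates are already known: $H(\pi_{*}|\pi_{2t})=O(t^{-1})$ from \cref{th:SinkhornRate}(b) and \cref{co:reverseEntropyRate}, and $\sqrt{H(\mu_{2t}|\mu)}=O(t^{-1})$ from \cref{pr:marginalRate} combined with \cref{th:SinkhornRate}. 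Plugging in the explicit constants from those results — $H(\pi_{*}|\pi_{2t})\le 5(C_1^2\vee H(\pi_*|R))/(t-t_1)$ at $t\mapsto\lfloor t/2\rfloor$ via the marginal bound, and $\tilde C_1\sqrt{H(\mu_{2t}|\mu)}\le \tilde C_1\sqrt{10(C_1^2\vee H(\pi_*|R))/((\lfloor t/2\rfloor-t_1)t)}$ — and bounding $H(\pi_{*}|\pi_{2t})$ itself by the same $C_1\times(\text{square-root})$ expression that appears in \eqref{eq:stabilityAppliedSink}, one can collect everything under a single $1/\sqrt{(\lfloor t/2\rfloor-t_1)t}$ with leading constant of the form $C_1\bar C_1\wedge(H(\pi_*|R)^{1/2}\bar C_1)$, since $\bar C_1=C_1+\tilde C_1$ absorbs both the $C_1$ from the entropy term and the $\tilde C_1$ from the marginal term. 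The $\tfrac12 H(\mu_{2t}|\mu)$ piece is of order $t^{-2}$, asymptotically negligible, and for $t\geq 2t_1$ it is dominated so that the constant $5$ suffices (mirroring how $t_1$ was used in \cref{th:SinkhornRate} to swallow the lower-order term).

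The main obstacle is purely bookkeeping rather than conceptual: matching the constant to the exact form $5\,(C_1\bar C_1\wedge(H(\pi_*|R)^{1/2}\bar C_1))$ requires carefully using the two upper bounds for $H(\pi_*|\pi_{2t})$ (one via $C_1^2$, one via $H(\pi_*|R)^{1/2}C_1$) in tandem with the two factors of $\tilde C_1$ versus the square-root of $10(C_1^2\vee H(\pi_*|R))$, and verifying that the cross terms and the $\tfrac12 H(\mu_{2t}|\mu)$ remainder all fit under the stated constant once $t\ge 2t_1$. A secondary subtlety is justifying the integrability needed to write the dual suboptimality as $H(\pi_{*}|R)-H(\pi_{2t}|R)$: this is where $\tilde C_1<\infty$ (assumption~\eqref{eq:C1tilde}) enters, guaranteeing $\varphi_t\in L^1(\mu+\mu_{2t})$ and hence $H(\pi_{2t}|R)<\infty$, so all the manipulations are with finite quantities. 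No new ideas beyond \cref{le:valueFunEstimate} and the previously derived rates are needed; the proof is a short assembly of these pieces.
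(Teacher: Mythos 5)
Your proposal is correct and follows essentially the same route as the paper's proof: identify the dual suboptimality with $H(\pi_{*}|R)-H(\pi_{2t}|R)$ (nonnegative by dual optimality), apply \cref{le:valueFunEstimate} with $\pi=\pi_{*}$, $\pi'=\pi_{2t}$, $\mu'=\mu_{2t}$, $\nu'=\nu$ (invoking the last sentence to drop $\tilde C_{2}$), absorb $H(\pi_{*}|\pi_{2t})$ via \eqref{eq:stabilityAppliedSink} to produce the $\bar C_{1}$ factor, and finish with \cref{co:reverseEntropyRate}. You also correctly flag the two integrability checks ($\varphi_{*}\in L^{1}(\mu)$, $\psi_{*}\in L^{1}(\nu)$ from $C_{1}<\infty$ and $H(\pi_{*}|R)<\infty$; $\varphi_{t}\in L^{1}(\mu+\mu_{2t})$ from $\tilde C_{1}<\infty$) that the paper records; no genuinely different idea is introduced.
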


\begin{proof}
  Let $t\geq0$. 
  Using \eqref{eq:potentials}, \eqref{eq:primalDual} and the fact that $\varphi_{*}\oplus\psi_{*}$ is the maximizer of the dual problem (e.g., \cite{Nutz.20}), we have 
  \begin{align*}
  H(\pi_{*}|R) -H(\pi_{2t}|R) &= \int \varphi_{*}\oplus\psi_{*}\, d(\mu\otimes\nu)- \int \varphi_{t}\oplus\psi_{t}\, d(\mu\otimes\nu) \geq0.
  \end{align*} 
  On the other hand, note that $C_{1}<\infty$ implies $\varphi_{*}\in L^{1}(\mu)$,
  and then also $\psi_{*}\in L^{1}(\nu)$ as $H(\pi_{*}|R)<\infty$. \Cref{le:valueFunEstimate} and~\eqref{eq:stabilityAppliedSink} thus yield
  \begin{align*}
     H(\pi_{*}|R) - H(\pi_{2t}|R)
  &\leq H(\pi_{*}|\pi_{2t}) + \tilde{C}_{1}\left(\sqrt{H(\mu_{2t}|\mu)}+\frac12 H(\mu_{2t}|\mu) \right) \\
  &\leq \bar{C}_{1}\left(\sqrt{H(\mu_{2t}|\mu)}+\frac12 H(\mu_{2t}|\mu) \right).
\end{align*}  
The claim now follows from \cref{co:reverseEntropyRate}.
\end{proof}

Recall that the potentials $(\varphi_{*},\psi_{*})$ are unique only up to an additive constant. 
To state any result on the convergence $\varphi_{t}\to\varphi_{*}$, we need to choose a particular constant. Indeed, by the monotonicity~\eqref{eq:SinkMeansIncrease1}, the limit $m:=\lim_{t\to\infty}\mu(\varphi_{t})$ exists, and we choose the constant such that $\mu(\varphi_{*})=m$. Then, the following holds.

\begin{corollary}\label{co:SinkPhiConvergence}
   We have $\varphi_{t}\to\varphi_{*}$ in $L^{p}(\mu)$ for every $p\in[1,\infty)$.
\end{corollary}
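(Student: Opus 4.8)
The plan is to upgrade the entropic convergence rates into $L^p(\mu)$-convergence of the potentials by combining three ingredients: the uniform moment bound $\tilde C_1<\infty$ from \eqref{eq:C1tilde} (and $C_1<\infty$ from \eqref{eq:C1inMainText}), the convergence of marginal entropies $H(\mu_{2t}|\mu)\to0$ from \cref{pr:marginalRate} or \cref{co:reverseEntropyRate}, and the weighted CKP inequality of \cref{le:BolleyVillani}. First I would observe that, by \eqref{eq:SinkMarginalDensities}, $\varphi_t-\varphi_{t+1}=\log\frac{d\mu_{2t}}{d\mu}$, so the variational formula for relative entropy (or the elementary inequality $H(\mu_{2t}|\mu)=\int(\varphi_t-\varphi_{t+1})\,d\mu_{2t}$ together with $\mu(\varphi_{t+1}-\varphi_t)=H(\mu|\mu_{2t})$) controls the $\mu$-mean increments $\mu(\varphi_{t+1})-\mu(\varphi_t)=H(\mu|\mu_{2t})\to0$; summability of these increments is \emph{not} quite what \eqref{eq:SinkMeansIncrease1} gives directly, but monotone boundedness of $\mu(\varphi_t)$ already forces $\mu(\varphi_t)\uparrow m$, which is why the normalization $\mu(\varphi_*)=m$ is the right choice.

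The core step is to bound $\|\varphi_t-\varphi_*\|_{L^p(\mu)}$. I would split this as follows. By \cref{le:BolleyVillani} applied with $F=\varphi_t-\varphi_*$ (whose exponential moments are uniformly controlled by $C_1$), and more pointedly by using that $|\varphi_t-\varphi_*|^p$ itself has a uniformly bounded exponential moment (since $|\varphi_t-\varphi_*|\le|\varphi_t|+|\varphi_*|$ and both $\tilde C_1<\infty$ and $C_1<\infty$ give $\int e^{\alpha|\varphi_t|}d\mu$, $\int e^{\alpha|\varphi_*|}d\mu$ bounded uniformly in $t$), the family $\{|\varphi_t-\varphi_*|^p\}_{t}$ is uniformly integrable with respect to $\mu$. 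Hence it suffices to show $\varphi_t\to\varphi_*$ in $\mu$-measure, or even just $\mu$-a.e.\ along a subsequence plus uniqueness of the limit. For convergence in measure I would argue: from \eqref{eq:stabilityAppliedSink}, $H(\pi_*|\pi_{2t})\to0$, whence $\pi_{2t}\to\pi_*$ in total variation (Pinsker), which in particular gives $\mu_{2t}\to\mu$; combined with $\log\frac{d\mu_{2t}}{d\mu}=\varphi_t-\varphi_{t+1}\to0$ in a suitable sense, one extracts that the increments $\varphi_{t+1}-\varphi_t\to0$ in $\mu$-measure. The remaining task is to turn the vanishing of increments into convergence of $\varphi_t$ itself; here the Schrödinger-system structure helps: since $H(\pi_*|\pi_{2t})+H(\pi_{2t}|\pi_*)\to0$ and $d\pi_{2t}=e^{\varphi_t\oplus\psi_t-c}d(\mu\otimes\nu)$ while $d\pi_*=e^{\varphi_*\oplus\psi_*-c}d(\mu\otimes\nu)$, taking the $\log$ of Radon--Nikodym derivatives gives $(\varphi_t-\varphi_*)\oplus(\psi_t-\psi_*)\to0$ in $L^1(\pi_*)$ (this is essentially the computation in the proof of \cref{le:stabilityEst}(b) read backwards, with the $L^1$ control coming from $C_1,\tilde C_1<\infty$). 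An $\oplus$-decomposition that tends to zero in $L^1$ of a coupling with full marginals forces each summand to converge to a constant in $L^1$ of its marginal; the constants are pinned down by the normalization $\mu(\varphi_t)\to m=\mu(\varphi_*)$, forcing $\varphi_t\to\varphi_*$ in $L^1(\mu)$ (and symmetrically $\psi_t\to\psi_*$ in $L^1(\nu)$).

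Finally, I would promote $L^1(\mu)$-convergence to $L^p(\mu)$-convergence for every finite $p$ by the uniform integrability of $\{|\varphi_t-\varphi_*|^p\}_t$ established above (equivalently: $L^1$-convergence plus uniform boundedness of a strictly higher moment, here even an exponential moment, yields $L^p$-convergence by Vitali's theorem). The main obstacle I anticipate is the middle step — rigorously extracting convergence of $\varphi_t$ (not merely of its increments) and identifying the limiting constant. The clean way around it is the $\oplus$-splitting argument: if $a_t(x)+b_t(y)\to0$ in $L^1(\pi_*)$ where $\pi_*$ has marginals $\mu,\nu$, then integrating out $y$ shows $a_t-\nu(b_t)\to0$ in $L^1(\mu)$ and integrating out $x$ shows $b_t-\mu(a_t)\to0$ in $L^1(\nu)$, with $\mu(a_t)+\nu(b_t)\to0$; combined with the mean normalization this yields $a_t=\varphi_t-\varphi_*\to0$ in $L^1(\mu)$. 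Everything else is a routine assembly of results already proved in the paper.
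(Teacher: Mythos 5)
Your overall strategy — extract some form of convergence of $\varphi_t\oplus\psi_t$ to $\varphi_*\oplus\psi_*$, isolate the $x$-dependence, pin down the additive constant via $\mu(\varphi_*)=\lim_t\mu(\varphi_t)$, and then upgrade to $L^p(\mu)$ using uniform integrability supplied by the exponential moment bound $C_1<\infty$ — is the same skeleton as the paper's proof, and the first and last steps are handled correctly.

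The middle step, however, has a genuine gap. You claim that $H(\pi_*|\pi_{2t})+H(\pi_{2t}|\pi_*)\to 0$ plus the entropy identity from \cref{le:stabilityEst}(b) yields $(\varphi_t-\varphi_*)\oplus(\psi_t-\psi_*)\to 0$ in $L^1(\pi_*)$. That identity gives $H(\pi_*|\pi_{2t})+H(\pi_{2t}|\pi_*)=\int (\varphi_*-\varphi_t)\,d(\mu-\mu_{2t})$ (the $\psi$-term vanishes since the second marginals agree), which is a signed integral against $\mu-\mu_{2t}$, not the $L^1(\pi_*)$-norm of the log-density; nonnegativity of relative entropy does not make the log-density nonnegative. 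Moreover, even granting $L^1(\pi_*)$ convergence, the $\oplus$-splitting lemma you invoke is false for a general coupling $\pi_*$ with full marginals: if $\pi_*$ were the monotone coupling of $\Unif[0,1]$ with itself and $a_t=t\,\1_{[0,1/2]}=-b_t$, then $a_t\oplus b_t$ vanishes $\pi_*$-a.s.\ while $a_t$ does not converge in $L^1(\mu)$. The ``integrating out $y$'' step implicitly replaces $\pi_*$ by $\mu\otimes\nu$: it is only under the product measure that Jensen gives $\int|a_t(x)+b_t(y)|\,d(\mu\otimes\nu)\geq\int|a_t(x)+\nu(b_t)|\,d\mu$. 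Since $\pi_*\sim\mu\otimes\nu$ but the density need not be bounded below (the relevant case here involves unbounded costs), one cannot pass freely between $L^1(\pi_*)$ and $L^1(\mu\otimes\nu)$.

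The paper sidesteps both issues by staying at the level of convergence in measure rather than $L^1$: Pinsker gives $\|\pi_{2t}-\pi_*\|_{TV}\to0$, i.e.\ $L^1(\mu\otimes\nu)$-convergence of the densities $e^{\varphi_t\oplus\psi_t-c}$, hence $\varphi_t\oplus\psi_t\to\varphi_*\oplus\psi_*$ in $\mu\otimes\nu$-measure; along a subsequence this is a.e.\ convergence, and then Fubini (freezing a good $y_0$) produces constants $a_t$ with $\varphi_t+a_t\to\varphi_*$ $\mu$-a.s.\ — no $L^1$ claim and no $\oplus$-splitting needed. Uniform integrability from $C_1<\infty$ upgrades this to $L^p(\mu)$, the normalization forces $a_t\to 0$, and uniqueness of the limit lifts the subsequence to the full sequence. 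If you restate your argument at the level of $\mu\otimes\nu$-measure and replace the $\oplus$-splitting by the Fubini/slicing step, the gap closes and you recover essentially the paper's proof.
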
 

\begin{proof}
  We have $H(\pi_{*}|\pi_{2t})\to0$ by \cref{{th:SinkhornRate}} and hence $\|\pi_{2t}-\pi_{*}\|_{TV}\to0$ by Pinsker's inequality. The latter is equivalent to the convergence of the densities, $e^{\varphi_{t}\oplus\psi_{t}-c}\to e^{\varphi_{*}\oplus\psi_{*}-c}$ in $L^{1}(\mu\otimes\nu)$. As a consequence, $\varphi_{t}\oplus\psi_{t} \to \varphi_{*}\oplus\psi_{*}$ in measure~$\mu\otimes\nu$. After passing to a subsequence, we deduce that $\varphi_{t}\oplus\psi_{t} \to \varphi_{*}\oplus\psi_{*}$ pointwise on a Borel set $A\subset\X\times\Y$ with $(\mu\otimes\nu)(A)=1$. By Fubini's theorem, $\mu\{x\in\X:(x,y_{0})\in A\}=1$ for $\nu$-a.a.\ $y_{0}\in\Y$. Fix any such~$y_{0}$, then we deduce $\varphi_{t}(x)+\psi_{t}(y_{0}) \to \varphi_{*}(x)+ \psi_{*}(y_{0})$ for $\mu$-a.a.\ $x\in\X$. To wit, there are finite constants $a_{t}$ such that $\varphi_{t} +a_{t} \to \varphi_{*}$ $\mu$-a.s. In view of the uniform integrability implied by $C_{1}<\infty$, this convergence also holds in $L^{p}(\mu)$ for any $p\in[1,\infty)$. In particular, $\mu(\varphi_{t}) +a_{t} \to \mu(\varphi_{*})$. Recalling that $\varphi_{*}$ was chosen such that $\mu(\varphi_{*})=\lim_{t} \mu(\varphi_{t})$, we see that $\lim_{t} a_{t} =0$ and $\varphi_{t} \to \varphi_{*}$ in $L^{p}(\mu)$ for every~$p$. As $\varphi_{*}$ was fixed from the beginning, this must hold for the original sequence $(\varphi_{t})_{t\geq0}$.
\end{proof}

\section{Estimates for Conjugate Functions}\label{se:biconjugates}

The main goal of this section is to provide uniform (in~$t$) bounds for the Sinkhorn iterates~$\varphi_{t}$, and thus enable the application of \cref{th:SinkhornRate} to a broad class of cost functions. Our bounds are based on general considerations around (bi)conjugate functions, which will simultaneously yield bounds for Sinkhorn iterates and potentials. While upper bounds are direct from the integrability of the cost function~$c(x,y)$, lower bounds use more details about the interaction between the two variables~$(x,y)$. More precisely, our results are based on its asymptotic properties, hence are robust with respect to measurable perturbations that are bounded or of lower order growth.

  A priori estimates for conjugate ($c$-convex) functions are very familiar in optimal transport (e.g., \cite{Villani.09}). In the context of entropic optimal transport, such ideas have been used in the context of bounded or Lipschitz costs (e.g., \cite{Carlier.21, DiMarinoGerolin.20}); that line of argument is generalized by \cref{le:unifCont} below but does not apply in the setting of main interest to us. For quadratic cost, \cite{MenaWeed.19} bounds the dual potentials in order to bound certain empirical processes. While the quadratic cost is special as it is separable up to an inner product, those bounds are the closest precursors that we are aware of.

Given a measurable function $f:\X\to[-\infty,\infty]$, we define its conjugate $f^{c}:\Y\to[-\infty,\infty]$ and its biconjugate $f^{cc}:\X\to[-\infty,\infty]$ by
  \begin{align*}
    f^{c}(y)&= -\log \int e^{f(x)-c(x,y)}\,\mu(dx), \quad y\in\Y, \\
    f^{cc}(x)&= - \log \int e^{f^{c}(y)-c(x,y)}\,\nu(dy), \quad x\in\X,
  \end{align*} 
provided that the integrals are well defined. We note the abuse of notation: the first conjugation involves $c(\cdot,y)$ and $\mu$ while the second involves $c(x,\cdot)$ and $\nu$.\footnote{Moreover, an abuse of terminology: to distinguish from optimal transport theory, one might want to call~$f^{c}$ a soft conjugate or soft $c$-transform, but we have opted for brevity.} 
The results of this section will be applied in two situations:
\begin{itemize}
\item
The Sinkhorn iterates satisfy $\psi_{t}=\varphi_{t}^{c}$ and $\varphi_{t+1}=\varphi_{t}^{cc}$ for $t\geq0$, by their definition~\eqref{eq:SinkDual}.
\item
 If $(\varphi_{*},\psi_{*})$ are potentials, they solve\footnote{To be precise, while the potentials are only defined up to nullsets, we choose versions satisfying~\eqref{eq:Schrodingersystem} everywhere.} the Schr\"odinger system~\eqref{eq:Schrodingersystem} which can be restated as $\psi_{*}=\varphi_{*}^{c}$ and $\varphi_{*}=\varphi_{*}^{cc}$.   
\end{itemize}
In both situations, the involved integrals are well defined. The common feature is that $\varphi_{t+1},\varphi_{*}$ are biconjugates of some function~$f$ (either~$\varphi_{t}$ or~$\varphi_{*}$). Moreover, we have a priori bounds on quantities such as $\mu(f)$ and $\nu(f^{c})$, cf.~\eqref{eq:SinkMeansIncrease1}--\eqref{eq:SinkMeansIncrease2}, whence we do not mind having such expressions in our estimates below.

Throughout this section, $(\X,d_{\X})$ and $(\Y,d_{\Y})$ are metric spaces with arbitrary but  fixed reference points $x_{0}\in\X$, $y_{0}\in\Y$. In the case of normed spaces, those are usually taken to be the origins. For brevity, we denote $|x|:=d_{\X}(x,x_{0})$ for $x\in\X$ and $|y|:=d_{\Y}(y,y_{0})$ for $y\in\Y$, even in the metric case.

Our estimates are based on the interplay of growth properties of~$c$ with integrability properties of the marginals. Our main interest is with cost functions of superlinear growth; most practical examples with (sub)linear growth can be covered by a more direct argument detailed in \cref{se:sublinGrowth} below.

To fix ideas, consider the example that $c(x,y)=|y-x|^{2}$ is the quadratic cost on $\R\times\R$ and $\nu$ is Gaussian. Then $\int e^{\lambda |y|^2} \nu(dy)$ is infinite for some $\lambda>0$ yet finite for small enough~$\lambda$. Thus, the natural growth of the cost may fail to be exponentially integrable (especially once we recall that the cost may be scaled by a large constant~$\eps^{-1}$ corresponding to the regularization parameter in~\eqref{eq:epsOT}).
In the example, $c(x,y)=y^{2}-2xy + x^{2}$, so that we can easily isolate the term~$y^{2}$ having the critical growth. In general, we will assume that $\int e^{\lambda |y|^p} \nu(dy)<\infty$ for some $\lambda>0$, and while natural costs then have growth of order~$p$, we will aim to isolate a function~$c_{2}(y)$ such that $c(x,y)-c_{2}(y)$ has growth of order~$<p$ in~$y$. The following is a representative example of what we aim to prove.

\begin{example}\label{ex:isolateTerm}
  Let $p\in[1,\infty)$ and $\int e^{\lambda|y|^p}\,\nu(dy)<\infty$ for some~$\lambda>0$. Suppose there exists a measurable function $c_{2}:\Y\to\R$ such that
    \begin{align}\label{eq:exampleRemoveLeadingOrder}
    \begin{split}
     |c(x,y)-c_{2}(y)| &\leq  C(1+|x||y|^{p-1} + |x|^{p}),\\
    |c_{2}(y)|&\leq C(1+|y|^{p})
    \end{split}
  \end{align} 
   for some $C>0$. Then, with a constant~$K$ independent of~$f$,
    \begin{align*}
    |f^{cc}(x)| &\leq |\mu(f)| + |\nu(f^{c})| + K(1+|x|^{p}).
  \end{align*} 
\end{example}

Condition~\eqref{eq:exampleRemoveLeadingOrder} is only a special case of the bounds to be handled below, but it already covers the most important examples, as shown by the next two lemmas.

\begin{lemma}\label{le:diffCostEstimates}
   Let $p\in[1,\infty)$, let $(\X,|\cdot|_{\X})$, $(\Y,|\cdot|_{\Y})$ be Banach spaces, and consider $\X\times\Y$ with a compatible norm~$|\cdot|$. Let 
   $c:\X\times\Y\to\R$ be 
   Gateaux differentiable with Gateaux derivative satisfying $|D_{u}c(z)|\leq C(1+|z|^{p-1})$ for all $u,z\in\X\times\Y$. Then for all $(x,y)\in\X\times\Y$,
  \begin{align*}
  |c(x,y)-c(0,y)| &\leq  2^{p-1}C(1+|x|_{\X}|y|_{\Y}^{p-1} + |x|_{\X}^{p}),\\
    |c(0,y)|&\leq |c(0,0)|+C(1+|y|_{\Y}^{p}).
  \end{align*} 
\end{lemma}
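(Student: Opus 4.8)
\textbf{Proof plan for Lemma~\ref{le:diffCostEstimates}.} The statement is purely a calculus estimate on~$c$ derived from the Gateaux-derivative growth bound; no probabilistic input is needed. The plan is to obtain both inequalities by integrating the derivative bound along straight line segments in $\X\times\Y$ and then controlling the resulting norms.

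First I would prove the second inequality. Write $c(0,y)-c(0,0) = \int_{0}^{1} D_{(0,y)} c(s(0,y))\,ds$ by the fundamental theorem of calculus applied to $s\mapsto c(s\cdot(0,y))$, whose derivative is the Gateaux derivative of $c$ at the point $(0,sy)$ in the direction $(0,y)$. Using $|D_{u}c(z)|\le C(1+|z|^{p-1})$ with $z=s(0,y)$ (so $|z|=s|y|_{\Y}$ in the compatible norm, up to the usual identification of $|(0,y)|$ with $|y|_{\Y}$), and then taking absolute values inside the integral, gives $|c(0,y)-c(0,0)|\le C\int_{0}^{1}(1+s^{p-1}|y|_{\Y}^{p-1})\,ds\cdot|y|_{\Y}$; wait — more carefully, the directional derivative scales linearly in the direction, so one gets a factor $|y|_{\Y}$ out front and $\int_0^1 (1 + (s|y|_\Y)^{p-1})\,ds = 1 + |y|_\Y^{p-1}/p$. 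Hence $|c(0,y)-c(0,0)|\le C(|y|_{\Y}+|y|_{\Y}^{p}/p)\le C(1+|y|_{\Y}^{p})$ after absorbing the lower-order term (using $t\le 1+t^{p}$ for $t\ge 0$, $p\ge 1$), which yields the claim with $|c(0,0)|$ on the right.

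For the first inequality, parametrize the segment from $(0,y)$ to $(x,y)$, i.e.\ apply the fundamental theorem of calculus to $s\mapsto c((sx,y))$, whose derivative is $D_{(x,0)}c((sx,y))$. This gives $|c(x,y)-c(0,y)|\le \int_{0}^{1}|D_{(x,0)}c((sx,y))|\,ds \le C|x|_{\X}\int_{0}^{1}(1+|(sx,y)|^{p-1})\,ds$, pulling $|x|_{\X}$ out of the directional derivative. Now bound $|(sx,y)|\le |x|_{\X}+|y|_{\Y}$ (compatibility of the norm), so $|(sx,y)|^{p-1}\le (|x|_{\X}+|y|_{\Y})^{p-1}\le 2^{p-1}(|x|_{\X}^{p-1}+|y|_{\Y}^{p-1})$ by convexity of $t\mapsto t^{p-1}$ for $p\ge 1$ (or subadditivity when $p\in[1,2]$ — in either case the constant $2^{p-1}$ works). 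Integrating, $|c(x,y)-c(0,y)|\le C|x|_{\X}\bigl(1+2^{p-1}|x|_{\X}^{p-1}+2^{p-1}|y|_{\Y}^{p-1}\bigr)$, and since $|x|_{\X}\cdot|x|_{\X}^{p-1}=|x|_{\X}^{p}$ and $|x|_{\X}\le 1+|x|_{\X}^{p}$, all terms can be collected into $2^{p-1}C(1+|x|_{\X}|y|_{\Y}^{p-1}+|x|_{\X}^{p})$ as claimed (bumping the bare constant into the ``$1$'' and the ``$|x|_\X^p$'' terms).

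The only mild subtleties, rather than genuine obstacles, are: (i) justifying the fundamental theorem of calculus for $s\mapsto c(\gamma(s))$ along a segment — this is standard given Gateaux differentiability with a locally bounded (here, polynomially bounded, hence locally bounded) derivative, so $s\mapsto c(\gamma(s))$ is locally Lipschitz and absolutely continuous; (ii) keeping track of the identification of the norm of $(x,0)$ and $(0,y)$ in the compatible norm $|\cdot|$ with $|x|_{\X}$ and $|y|_{\Y}$ respectively, which is exactly what ``compatible norm'' buys us; and (iii) the elementary inequalities $t\le 1+t^{p}$ and $(a+b)^{p-1}\le 2^{p-1}(a^{p-1}+b^{p-1})$ for $a,b\ge 0$, $p\ge 1$. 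None of these requires more than routine bookkeeping, so I would state them briefly and move on.
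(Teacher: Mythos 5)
Your argument takes essentially the same route as the paper's proof: both control $c(x,y)-c(0,y)$ and $c(0,y)-c(0,0)$ by traveling along the segments from $(0,y)$ to $(x,y)$ and from $(0,0)$ to $(0,y)$, bounding the Gateaux derivative along the path, and then applying the elementary inequalities $t\le 1+t^{p}$ and $(a+b)^{p-1}\le 2^{p-1}(a^{p-1}+b^{p-1})$. The only cosmetic difference is that you integrate the derivative via the fundamental theorem of calculus for the scalar map $s\mapsto c(\gamma(s))$, whereas the paper invokes a mean value theorem for Gateaux-differentiable maps and picks a single intermediate point $\lambda\in[0,1]$; the two give the same bounds up to the same bookkeeping of constants.
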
 

\begin{proof}
  Fix $(x,y)\in\X\times\Y$ and write $x':=(x,0)$, $y':=(0,y)$. A version of the mean value theorem (cf.\ the proof of \cite[Theorem~1.8, p.\,13]{AmbrosettiProdi.95})
  shows that for some $\lambda\in[0,1]$,
    \begin{align*}
    |c(x,y)-c(0,y)| &= |c(y'+x')-c(y')| 
    \leq |D_{x'}c(y'+\lambda x')||x'|.
  \end{align*} 
  As $|y'+\lambda x'|\leq |y'| + |x'|=|y|_{\Y}+|x|_{\X}$, the assumption then yields
   \begin{align*}
    |c(x,y)-c(0,y)|
    &\leq C(1+(|y|_{\Y} + |x|_{\X})^{p-1})|x|_{\X} \\&\leq 2^{p-1}C(1+|x|_{\X}|y|_{\Y}^{p-1} + |x|_{\X}^{p}).
  \end{align*} 
  Similarly, $|c(0,y)-c(0,0)|\leq |D_{y'}c(\lambda y')||y'|\leq C(1+|y|_{\Y}^{p})$.
\end{proof}

The second example concerns the case $\X=\Y$; exponents $p\leq1$ are handled in \cref{se:sublinGrowth} below.

\begin{lemma}\label{le:costpEstimates}
  Let $p\in[1,\infty)$, let $d$ be a metric on $\X$ and $|x|:=d(x,x_{0})$ for some $x_{0}\in\X$. 
  Then for all $x,y\in\X$,
    \begin{align*}
    \big|d(x,y)^{p}-|y|^{p}\big| \leq p2^{p-1} (|x||y|^{p-1} + |x|^{p}).
  \end{align*} 
\end{lemma}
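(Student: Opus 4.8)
The plan is to reduce the metric inequality to the elementary scalar fact that for nonnegative reals $a,b$ one has $|(a+b)^{p}-b^{p}|\le p2^{p-1}(ab^{p-1}+a^{p})$, and then apply it twice using the triangle inequality for $d$. First I would set $a:=|x|$ and $b:=|y|$. The triangle inequality gives $d(x,y)\le |x|+|y|=a+b$ and also $|y|\le d(x,y)+|x|$, i.e. $b\le d(x,y)+a$. Since $t\mapsto t^{p}$ is increasing on $[0,\infty)$, these yield $d(x,y)^{p}\le (a+b)^{p}$ and $b^{p}\le (d(x,y)+a)^{p}$. Subtracting $b^{p}$ from the first and $d(x,y)^{p}$ from the second, we get both $d(x,y)^{p}-b^{p}\le (a+b)^{p}-b^{p}$ and $b^{p}-d(x,y)^{p}\le (d(x,y)+a)^{p}-d(x,y)^{p}$; in each case the right-hand side is of the form $(s+a)^{p}-s^{p}$ for some $s\ge 0$ (namely $s=b$ in the first, $s=d(x,y)$ in the second).

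Next I would establish the scalar lemma: for $s\ge 0$ and $a\ge 0$, $(s+a)^{p}-s^{p}\le p2^{p-1}(as^{p-1}+a^{p})$. The cleanest route is the mean value theorem applied to $\phi(t)=t^{p}$, which for $p\ge 1$ is differentiable on $(0,\infty)$ with $\phi'(t)=pt^{p-1}$: there is $\xi\in(s,s+a)$ with $(s+a)^{p}-s^{p}=pa\xi^{p-1}\le pa(s+a)^{p-1}$. Then the elementary bound $(s+a)^{p-1}\le 2^{p-1}(s^{p-1}+a^{p-1})$ (valid for $p-1\ge 0$, since $(u+v)^{q}\le 2^{q}(u^{q}+v^{q})$ for $q\ge 0$) gives $(s+a)^{p}-s^{p}\le p2^{p-1}a(s^{p-1}+a^{p-1})=p2^{p-1}(as^{p-1}+a^{p})$. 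A small caveat: at $s=0$ or for $p=1$ one should check directly, but both cases are immediate ($s=0$: $a^{p}\le p2^{p-1}a^{p}$; $p=1$: $(s+a)-s=a\le a+a=2a$... actually $=a\le a$, even simpler).

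Finally I would combine: applying the scalar bound with $s=b=|y|$ to the first inequality gives $d(x,y)^{p}-|y|^{p}\le p2^{p-1}(|x||y|^{p-1}+|x|^{p})$, which is the upper bound in the claim. For the lower bound, applying the scalar bound with $s=d(x,y)$ to the second inequality gives $|y|^{p}-d(x,y)^{p}\le p2^{p-1}(|x|d(x,y)^{p-1}+|x|^{p})$; since $d(x,y)\le |x|+|y|$, one has $d(x,y)^{p-1}\le 2^{p-1}(|x|^{p-1}+|y|^{p-1})$, so this is $\le p2^{p-1}(|x|\cdot 2^{p-1}(|x|^{p-1}+|y|^{p-1})+|x|^{p})\le p4^{p-1}(|x||y|^{p-1}+|x|^{p})$, which is a constant slightly worse than $p2^{p-1}$. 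To recover exactly the stated $p2^{p-1}$ one can instead run the mean value theorem more carefully on the lower bound directly using $b\le d(x,y)+a$ and the monotonicity of $\phi'$: write $|y|^{p}-d(x,y)^{p}\le 0$ whenever $|y|\le d(x,y)$, and otherwise note $|y|-d(x,y)\le |x|$ so $\phi(|y|)-\phi(d(x,y))=p\eta^{p-1}(|y|-d(x,y))$ with $\eta\in(d(x,y),|y|)\subset(0,|x|+|y|)$, giving $\le p(|x|+|y|)^{p-1}|x|\le p2^{p-1}(|x||y|^{p-1}+|x|^{p})$ by the same splitting as in the upper-bound case. This symmetric treatment gives precisely the constant $p2^{p-1}$ on both sides. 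The only genuine subtlety — and the one place to be careful — is handling the endpoint cases ($p=1$, or $|x|=0$, or $|y|=0$) where the mean value theorem needs a trivial direct verification; everything else is a routine application of monotonicity of $t\mapsto t^{p}$ and the triangle inequality.
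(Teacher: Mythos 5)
Your proof is correct and uses the same ingredients as the paper's: the triangle inequality for $d$, the mean value theorem applied to $t\mapsto t^{p}$, and the splitting $(u+v)^{p-1}\leq 2^{p-1}(u^{p-1}+v^{p-1})$. Your corrected lower-bound argument (case split on $|y|\leq d(x,y)$ vs.\ $|y|>d(x,y)$, rather than the paper's split on $|x|\leq|y|$ vs.\ $|x|\geq|y|$ with the scalar MVT estimate applied to $a=|y|$, $h=-|x|$) is a cosmetic variant of the paper's case analysis; both recover the constant $p2^{p-1}$.
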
 

\begin{proof}
  The mean value theorem on~$\R$ shows that for any $a,h\in\R$ with $a\geq0$ and $a+h\geq0$ there exists $\lambda\in[0,1]$  with 
  $$
    | (a+h)^{p}-a^{p} | \leq |p(a+\lambda h)^{p-1} h| \leq (C a^{p-1} +  C |h|^{p-1})|h| \leq C a^{p-1}|h| +  C |h|^{p}
  $$
  where $C=p2^{p-1}$. 
  Noting that $d(x,y)\leq |x|+|y|$, the above with $a=|y|$ and $h=|x|$ yields
  \begin{align*}
     d(x,y)^{p}-|y|^{p} \leq (|y|+|x|)^{p}-|y|^{p} \leq C |x||y|^{p-1} +  C |x|^{p},
  \end{align*} 
  which is the desired upper bound.
  For the lower bound, suppose first that $|x|\leq |y|$ and note that $d(x,y)+|x|\geq d(y,x_{0})=|y|$. Then we can apply the above with $a=|y|$ and $h=-|x|$ to find
  \begin{align*}
     d(x,y)^{p}-|y|^{p} \geq (|y|-|x|)^{p}-|y|^{p} \geq -C |x||y|^{p-1} - C |x|^{p}.
  \end{align*} 
  Whereas if $|x|\geq |y|$, then trivially
  $d(x,y)^{p}-|y|^{p} \geq -|y|^{p} \geq - |x|^{p}$.
\end{proof}

Having justified a condition like~\eqref{eq:exampleRemoveLeadingOrder}, let us now focus on how to obtain the bounds for $f^{cc}$. We first record an auxiliary result on the equivalence between certain exponential moments and a bound on the moment-generating function, extending a familiar relationship for subgaussian measures (corresponding to $p=2$ and $q=1$).

\begin{lemma}\label{le:ExpMomentBd}
Let $0<q<p$. The existence of $\lambda>0$ such that 
\begin{align}\label{eq:expMoment}
    K:=\int e^{\lambda |y|^p} \nu(dy)<\infty
\end{align}
is equivalent to the existence of $C_{0},C>0$ such that
\begin{align}\label{eq:MGFineq}
    \int e^{t|y|^q}\nu(dy)\leq C_{0}\,e^{Ct^{p/(p-q)}}, \quad t\geq 0.
\end{align}
Moreover, $C$ and $C_{0}$ depend only on $\lambda,K,p,q$.
\end{lemma}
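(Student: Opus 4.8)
\textbf{Plan for the proof of Lemma~\ref{le:ExpMomentBd}.}
The equivalence is essentially a Legendre-transform duality between the exponential-moment condition and the moment-generating-function bound, so the plan is to prove each implication by an explicit optimization. For the direction \eqref{eq:expMoment}$\Rightarrow$\eqref{eq:MGFineq}, I would use Young's inequality in the form $t s^{q} \le \lambda s^{p} + c_{\lambda} t^{p/(p-q)}$ valid for all $s,t\ge 0$, where $c_{\lambda}$ is the optimal Young constant obtained by maximizing $ts^{q}-\lambda s^{p}$ over $s\ge0$; this maximum is attained at $s^{p-q}=\tfrac{qt}{\lambda p}$ and equals a constant multiple of $(t/\lambda)^{p/(p-q)}$, giving $c_{\lambda}=C'\lambda^{-q/(p-q)}$ with $C'=C'(p,q)$. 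Applying this pointwise with $s=|y|$ and integrating against $\nu$ yields $\int e^{t|y|^{q}}\,\nu(dy)\le e^{c_{\lambda}t^{p/(p-q)}}\int e^{\lambda|y|^{p}}\,\nu(dy)=K\,e^{c_{\lambda}t^{p/(p-q)}}$, which is \eqref{eq:MGFineq} with $C_{0}=K$ and $C=c_{\lambda}$, both depending only on $\lambda,K,p,q$ as claimed.

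For the converse \eqref{eq:MGFineq}$\Rightarrow$\eqref{eq:expMoment}, the idea is a Chernoff/union-bound argument over dyadic level sets. Fix $r>0$ and estimate $\nu(|y|^{q}\ge r)\le e^{-tr}\int e^{t|y|^{q}}\,\nu(dy)\le C_{0}\exp(Ct^{p/(p-q)}-tr)$; optimizing over $t\ge0$ (the exponent is minimized at $t$ proportional to $r^{(p-q)/q}$) gives a tail bound of the form $\nu(|y|^{q}\ge r)\le C_{0}e^{-\delta r^{p/q}}$ for a constant $\delta=\delta(C,p,q)>0$, i.e.\ $\nu(|y|\ge u)\le C_{0}e^{-\delta u^{p}}$. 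Then for any $\lambda<\delta$ one writes $\int e^{\lambda|y|^{p}}\,\nu(dy)=\int_{0}^{\infty}\nu(e^{\lambda|y|^{p}}>s)\,ds$ and, after the substitution $s=e^{\lambda u^{p}}$, bounds this by $1+\lambda p\int_{0}^{\infty}u^{p-1}e^{\lambda u^{p}}\nu(|y|\ge u)\,du\le 1+C_{0}\lambda p\int_{0}^{\infty}u^{p-1}e^{-(\delta-\lambda)u^{p}}\,du<\infty$; choosing e.g.\ $\lambda=\delta/2$ makes the bound depend only on $C_{0},\delta,p$, hence only on $C_{0},C,p,q$.

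The only mildly delicate points are bookkeeping of constants (to verify the final ``depend only on $\lambda,K,p,q$'' claim, one must check that the threshold $\delta$ extracted in the converse direction, and the Young constant $c_{\lambda}$ in the forward direction, are expressed purely in terms of the allowed parameters), and handling the case $q\le 1$ versus $q>1$ uniformly --- but since Young's inequality $ab\le a^{r}/r+b^{r'}/r'$ holds for any conjugate exponents and we only ever use it with the single pair $(p/q,\,p/(p-q))$, no case distinction is actually needed. I do not expect any real obstacle here; the statement is a soft analytic equivalence and both implications reduce to a one-variable optimization followed by integration of a tail estimate.
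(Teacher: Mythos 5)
Your proof is correct, and both implications rest on the same Legendre-transform / Chernoff ideas as the paper's, but the packaging differs in both directions, so a brief comparison is warranted. In the forward direction the paper first applies Cauchy--Schwarz to split $\int e^{t|y|^q}\,d\nu$ into $\bigl(\int e^{2t|y|^q-\lambda|y|^p}\,d\nu\bigr)^{1/2}\bigl(\int e^{\lambda|y|^p}\,d\nu\bigr)^{1/2}$ and then bounds the supremum of $2t\xi^q-\lambda\xi^p$, yielding $C_0=\sqrt{K}$; you instead apply Young's inequality $t s^q\le \lambda s^p + c_\lambda t^{p/(p-q)}$ pointwise before integrating, yielding $C_0=K$. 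Your route skips the H\"older step entirely and is arguably cleaner; the resulting constants differ but both depend only on $\lambda,K,p,q$. (One small slip in your writeup: the maximum of $t s^q-\lambda s^p$ is a constant multiple of $\lambda^{-q/(p-q)}t^{p/(p-q)}$, not of $(t/\lambda)^{p/(p-q)}$; your subsequent line $c_\lambda = C'\lambda^{-q/(p-q)}$ is the correct statement.) In the converse direction the two proofs coincide through the Chernoff optimization yielding the tail bound $\nu(|y|\ge u)\le C_0\,e^{-\delta u^p}$; at that point the paper observes this says $|Y|^{p/2}$ is subgaussian and cites Vershynin's equivalence of subgaussian tail and exponential-square-moment conditions, while you finish by a direct layer-cake integration of $\int e^{\lambda|y|^p}\,d\nu$ with $\lambda<\delta$. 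Your version is more self-contained; the paper's is shorter by outsourcing the last step to a standard reference. Either way the final constants are expressible in the permitted parameters, as the lemma requires.
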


\begin{proof}
Assuming~\eqref{eq:expMoment}, H\"older's inequality yields
\begin{align*}
    \int e^{t|y|^q}\nu(dy) 
     & \leq \Big(\int e^{2t|y|^q - \lambda |y|^p} \nu(dy)\Big)^{1/2}\Big(\int e^{\lambda |y|^p} \nu(dy)\Big)^{1/2}\\
    & \leq \sqrt K \Big(\int e^{2t|y|^q - \lambda |y|^p} \nu(dy)\Big)^{1/2}
\end{align*}
and calculus shows that the exponent under the integral satisfies
\begin{align*}
  \sup_{\xi \geq0} \,(2t\xi^q - \lambda \xi^p) %
  =2Ct^{\frac{p}{p-q}} \qforq C:=(2q/\lambda p)^{\frac{q}{p-q}} (1- q/p).
\end{align*} 
Setting $C_{0}=\sqrt{K}$, it follows that~\eqref{eq:MGFineq} holds. Conversely, if~\eqref{eq:MGFineq} holds and a random variable~$Y$ has distribution~$\nu$ under~$P$, then
\begin{align*}
  P\{|Y|\geq s\} 
  &= P\{e^{t|Y|^{q}} \geq e^{ts^{q}}\}
  \leq e^{-ts^{q}} E[e^{t|Y|^{q}}]\\
  &\leq e^{-ts^{q}}  C_{0} e^{Ct^{p/(p-q)}}
  = C_{0} e^{Ct^{p/(p-q)}-ts^{q}}.
\end{align*} 
Optimizing the choice of~$t$ yields
\begin{align*}
  \inf_{t\geq0} \, (Ct^{p/(p-q)}-ts^{q}) = -C's^{p} \qforq C':=(p/q)((p-q)/Cp)^{p/q-1}.
\end{align*}
Thus $P\{|Y|\geq s\}\leq C_{0} e^{-C's^{p}}$, or equivalently $P\{|Y|^{p/2}\geq u\}\leq C_{0} e^{-C'u^{2}}$. That is, $Y':=|Y|^{p/2}$ is subgaussian, which is equivalent to~\eqref{eq:expMoment}; see for instance \cite[Proposition~2.5.2, p.\,22]{Vershynin.18}.
\end{proof}

We can now state the main tool. As we have opted for a symmetric presentation, its condition involves functions $c_{1}, a_{\pm}$ that are partially redundant. The main idea is, still, to isolate a well-chosen function~$c_{2}(y)$ from the cost such as to improve the integrability.

\begin{lemma}\label{le:potentialGrowthp}
  Fix $p>0$ and suppose that
  \begin{align}\label{eq:not_subgauss}
    \int e^{\lambda|y|^p}\,\nu(dy)<\infty
  \end{align} 
  for some $\lambda >0$. Fix $N\geq0$ and let $\alpha_{i}\in [0,p]$, $\beta_{i}\in[0,p)$, $1\leq i\leq N$ be such that $\alpha_{i}+\beta_{i}\leq p$. Set $\tilde{\alpha}_{i}:=p-\beta_{i} \geq \alpha_{i}$. On the strength of \cref{le:ExpMomentBd}, there are $C,C_{0}>0$ with
  \begin{align}\label{eq:MGFineqAlpha}
    \int e^{t|y|^{\beta_{i}}}\nu(dy)\leq C_{0}\,e^{Ct^{p/\tilde{\alpha}_{i}}}, \quad t\geq 0
  \end{align}
  for all $1\leq i\leq N$. Let $c$ be of the form
  $$
   c(x,y)=c_{1}(x) + c_{2}(y) + \hat c(x,y)
  $$
  where $c_{1}\in L^{1}(\mu)$, $c_{2}\in L^{1}(\nu)$ and 
  \begin{align}
    \hat c(x,y) &\leq a_{+}(x) + \sum_{i=1}^{N} K^{i}_{+}|x|^{\alpha_{i}}|y|^{\beta_{i}}, \label{eq:cbound1}\\
    \hat c(x,y) &\geq -a_{-}(x) - \sum_{i=1}^{N} K^{i}_{-}|x|^{\alpha_{i}}|y|^{\beta_{i}} \label{eq:cbound2}
  \end{align}
  with $K^{i}_{\pm}\in\R_{+}$ and $a_{\pm}$ nonnegative measurable functions such that 
  \begin{align}
    A_{+} &= \int a_{+}(x)\,\mu(dx) <\infty. \label{eq:Aplus}
  \end{align}
  Finally, set $A_{\alpha_{i}}:= \int |x|^{\alpha_{i}}\,\mu(dx)$.
  Let $f\in L^{1}(\mu)$ be such that $f^{c},f^{cc}$ are well defined. Then
  \begin{align}
    f^{c}(y)-c_{2}(y)&\leq A_{+} -\mu(f-c_{1}) + \sum_{i=1}^{N} K^{i}_{+}A_{\alpha_{i}}|y|^{\beta_{i}}, \label{eq:potentialGrowthpUpper}\\
    f^{cc}(x)-c_{1}(x)&\geq \mu(f-c_{1}) -A_{+} - \log C_{0}-a_{-}(x) \nonumber\\
    & \quad - C\sum_{i=1}^{N}  N^{\frac{p}{\tilde{\alpha}_{i}}-1}(K^{i}_{+}A_{\alpha_{i}}+K^{i}_{-}|x|^{\alpha_{i}})^{p/\tilde{\alpha}_{i}}. \label{eq:potentialGrowthpLower}
  \end{align}
\end{lemma}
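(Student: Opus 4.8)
The plan is to derive both inequalities directly from the definitions of conjugate and biconjugate, using the growth bounds \eqref{eq:cbound1}--\eqref{eq:cbound2} together with the moment-generating-function estimate \eqref{eq:MGFineqAlpha}. The upper bound \eqref{eq:potentialGrowthpUpper} is the easy half: starting from $f^c(y) = -\log\int e^{f(x)-c(x,y)}\,\mu(dx)$ and substituting $c(x,y)=c_1(x)+c_2(y)+\hat c(x,y)$, I would pull out $c_2(y)$, use the lower bound \eqref{eq:cbound2} on $\hat c$ so that $-\hat c(x,y) \le a_-(x)+\sum_i K_-^i|x|^{\alpha_i}|y|^{\beta_i}$ --- wait, actually for the upper bound on $f^c$ I need a \emph{lower} bound on the integrand's exponent, hence I need $c(x,y)$ bounded \emph{above}, i.e.\ I use \eqref{eq:cbound1}. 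Then $f^c(y)-c_2(y) \le -\log\int e^{f(x)-c_1(x) - a_+(x) - \sum_i K_+^i|x|^{\alpha_i}|y|^{\beta_i}}\,\mu(dx)$, and by Jensen's inequality applied to the probability measure $\mu$ (moving the log inside, using convexity of $-\log$), this is $\le \int\bigl(-(f-c_1)(x) + a_+(x) + \sum_i K_+^i|x|^{\alpha_i}|y|^{\beta_i}\bigr)\mu(dx) = -\mu(f-c_1)+A_+ + \sum_i K_+^i A_{\alpha_i}|y|^{\beta_i}$, which is exactly \eqref{eq:potentialGrowthpUpper}.

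For the lower bound \eqref{eq:potentialGrowthpLower}, I would start from $f^{cc}(x) = -\log\int e^{f^c(y)-c(x,y)}\,\nu(dy)$ and substitute $c(x,y)=c_1(x)+c_2(y)+\hat c(x,y)$, pulling out $c_1(x)$ to get $f^{cc}(x)-c_1(x) = -\log\int e^{f^c(y)-c_2(y)-\hat c(x,y)}\,\nu(dy)$. Now I bound the exponent from above: use \eqref{eq:potentialGrowthpUpper} for $f^c(y)-c_2(y)$ and use the lower bound \eqref{eq:cbound2} on $\hat c$ so that $-\hat c(x,y) \le a_-(x) + \sum_i K_-^i|x|^{\alpha_i}|y|^{\beta_i}$. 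This yields
\begin{align*}
  f^{cc}(x)-c_1(x) \ge -\log\int e^{A_+ - \mu(f-c_1) + a_-(x) + \sum_i (K_+^i A_{\alpha_i}+K_-^i|x|^{\alpha_i})|y|^{\beta_i}}\,\nu(dy).
\end{align*}
The constant terms $A_+ - \mu(f-c_1) + a_-(x)$ come out of the logarithm with the opposite sign, giving the first line of \eqref{eq:potentialGrowthpLower}. It remains to bound $\log\int e^{\sum_i b_i |y|^{\beta_i}}\,\nu(dy)$ from above where $b_i := K_+^i A_{\alpha_i}+K_-^i|x|^{\alpha_i}$. I would apply the generalized Hölder inequality with exponents chosen so as to split the product $\prod_i e^{b_i|y|^{\beta_i}}$ into $N$ factors --- concretely $\int \prod_i e^{b_i|y|^{\beta_i}}\,\nu(dy) \le \prod_i \bigl(\int e^{N b_i |y|^{\beta_i}}\,\nu(dy)\bigr)^{1/N}$ --- and then feed each factor into \eqref{eq:MGFineqAlpha} with $t = N b_i$, producing $\bigl(C_0 e^{C(Nb_i)^{p/\tilde\alpha_i}}\bigr)^{1/N}$. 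Taking logs and collecting, $\log\int e^{\sum_i b_i|y|^{\beta_i}}\,\nu(dy) \le \log C_0 + C\sum_i N^{p/\tilde\alpha_i - 1} b_i^{p/\tilde\alpha_i}$, which after substituting $b_i$ back in is precisely the remaining terms of \eqref{eq:potentialGrowthpLower}.

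The main obstacle, and the one point requiring care, is the bookkeeping in the Hölder step: one must verify the factors $p/\tilde\alpha_i$ are genuinely $\ge 1$ (which follows from $\tilde\alpha_i = p-\beta_i \le p$, using $\beta_i \ge 0$), so that $\int e^{t|y|^{\beta_i}}\,\nu(dy) < \infty$ is guaranteed by \eqref{eq:not_subgauss} and \cref{le:ExpMomentBd} applies with the pair $(p,\beta_i)$ satisfying $0 \le \beta_i < p$; and one must track exactly how the factor $N$ distributes, explaining the $N^{p/\tilde\alpha_i-1}$ weight. A minor secondary point is justifying that $f^c \in L^1(\nu)$ and the various integrability conditions needed for the Jensen/Hölder manipulations and for the final bound to be meaningful rather than $+\infty$; these follow from the hypothesis that $f^c, f^{cc}$ are well defined together with $f\in L^1(\mu)$, $c_1\in L^1(\mu)$, $c_2\in L^1(\nu)$, and \eqref{eq:Aplus}. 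Finally, I should note that the case $N=0$ (empty sums) is included and the bounds degenerate gracefully.
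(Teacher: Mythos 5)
Your proposal is correct and follows essentially the same route as the paper: Jensen's inequality for the conjugate upper bound, substitution of that bound into the biconjugate, and generalized H\"older with $N$ equal exponents feeding into the moment-generating-function estimate \eqref{eq:MGFineqAlpha}, which is exactly where the $N^{p/\tilde\alpha_i-1}$ weights arise. The only cosmetic difference is that the paper first reduces to $c_1=c_2=0$ and applies Jensen before inserting the cost bounds, whereas you keep $c_1,c_2$ explicit and bound the cost first; the two orderings are equivalent.
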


\begin{proof}
 We may assume that $c_{1}=0=c_{2}$ and $c=\hat c$. By Jensen's inequality,
  \begin{align*}
  f^{c}(y)&\leq -\log \int e^{f(x)-c(x,y)}\,\mu(dx) \\
      &\leq \int (c(x,y) - f(x))\,\mu(dx) \\
      &\leq \int \Big(a_{+}(x) + \sum_{i=1}^{N} K^{i}_{+}|x|^{\alpha_{i}}|y|^{\beta_{i}}\Big) \,\mu(dx) - \mu(f)\\
      & \leq A_{+}   + \sum_{i=1}^{N} K^{i}_{+}A_{\alpha_{i}}|y|^{\beta_{i}} - \mu(f),
\end{align*}
proving the upper bound. Using it in the definition of $f^{cc}$ then yields
\begin{align*}
 -f^{cc}(x)
 & = \log \int e^{f^{c}(y)-c(x,y)}\,\nu(dy) \\
 & \leq A_{+} - \mu(f) + a_{-}(x) \\
 &\quad + \log \int \exp\Big( \sum_{i=1}^{N}K^{i}_{+}A_{\alpha_{i}}|y|^{\beta_{i}}+ \sum_{i=1}^{N}K^{i}_{-}|x|^{\alpha_{i}}|y|^{\beta_{i}}\Big)\,\nu(dy).
 \end{align*}
We then apply H\"older's inequality and~\eqref{eq:MGFineqAlpha} to obtain
\begin{align*}
& \int e^{\sum_{i=1}^{N} (K^{i}_{+}A_{\alpha_{i}}+K^{i}_{-}|x|^{\alpha_{i}})|y|^{\beta_{i}}}\,\nu(dy) \\
&\hspace{8em}\leq \prod_{i=1}^{N} \Big( \int e^{N(K^{i}_{+}A_{\alpha_{i}}+K^{i}_{-}|x|^{\alpha_{i}})|y|^{\beta_{i}}}\,\nu(dy)\Big)^{\frac{1}{N}}  \\
&\hspace{8em} \leq C_{0} \prod_{i=1}^{N} e^{CN^{p/\tilde{\alpha}_{i}-1}(K^{i}_{+}A_{\alpha_{i}}+K^{i}_{-}|x|^{\alpha_{i}})^{p/\tilde{\alpha}_{i}}},
\end{align*}
completing the proof.
\end{proof}

\begin{remark}\label{rk:constant2}
  One can generalize \cref{le:potentialGrowthp} by allowing additional functions $b_{\pm}(y)$ in \eqref{eq:cbound1}--\eqref{eq:cbound2}. The upper bound~\eqref{eq:potentialGrowthpUpper} then has an additional term $+b_{+}(y)$. In the proof of the lower bound, an application of H\"older's inequality adds a term $-\log[(\int e^{2(b_{-}(y)+b_{+}(y))}\,\nu(dy))^{1/2}]$ in~\eqref{eq:potentialGrowthpLower} and also changes~$N$ to $2N$. We have chosen the simpler statement above as the examples of our main interest do not necessitate these additional functions.
\end{remark}

The next theorem summarizes our bounds for biconjugate functions.

\begin{theorem}\label{th:unifBiconjugBound}
  Let $c$ and $\nu$ satisfy~\eqref{eq:not_subgauss}--\eqref{eq:Aplus}. Suppose in addition that $c_{1},c_{2},a_{\pm}$ have growth of order at most~$p$ and that  $\int|x|^{p}\,\mu(dx)<\infty$.
  Then
  \begin{align*}
    |f^{cc}(x)| &\leq \mu(f^{-}) + \nu((f^{c})^{-})  + K(1+|x|^{p})
  \end{align*} 
  where~$K$ is independent of~$f$. If~$c$ is replaced by $\tilde c:=c/\eps$ with $\eps\in(0,1)$, we have a corresponding bound with constant 
  \begin{align}\label{eq:Kscaling}
    \tilde K:=\eps^{-\max\{p/\alpha,1\}} K, \quad \mbox{where}\quad \alpha:=\min_{i} \tilde\alpha_{i} = p-\max_{i}\beta_{i}.
  \end{align}
\end{theorem}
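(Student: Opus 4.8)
The plan is to prove \cref{th:unifBiconjugBound} by combining the two-sided bounds of \cref{le:potentialGrowthp} with the elementary fact that a function of growth order at most~$p$ is dominated, up to an additive constant, by a multiple of~$1+|x|^{p}$.

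\medskip

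\textbf{Upper bound.} First I would establish $f^{cc}(x)\le K(1+|x|^{p})+\mu(f^{-})+\nu((f^{c})^{-})$. The starting point is the definition $-f^{cc}(x)=\log\int e^{f^{c}(y)-c(x,y)}\,\nu(dy)$. Since $c(x,y)=c_{1}(x)+c_{2}(y)+\hat c(x,y)$ and $\hat c\ge -a_{-}(x)-\sum_{i}K^{i}_{-}|x|^{\alpha_{i}}|y|^{\beta_{i}}$, Jensen's inequality together with the trivial bound $f^{c}(y)\le (f^{c}(y))^{+}$ and the assumed growth of order~$p$ of $c_{1},c_{2},a_{-}$ yields a bound of the form $-f^{cc}(x)\ge -C'(1+|x|^{p})-\nu((f^{c})^{+})$ after integrating; more precisely one has to pass $(f^{c})^{+}$ through an exponential. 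The cleaner route is: drop the $\hat c$ term using its lower bound, absorb $c_{1}(x)+a_{-}(x)$ into $C'(1+|x|^{p})$ by the growth hypothesis, and handle $\int e^{(f^{c}(y))^{+}-c_{2}(y)-\sum K^{i}_{-}|x|^{\alpha_{i}}|y|^{\beta_{i}}}\,\nu(dy)$; but rather than reprove this, I will instead invoke \eqref{eq:potentialGrowthpLower} directly, which already gives $f^{cc}(x)-c_{1}(x)\ge \mu(f-c_{1})-A_{+}-\log C_{0}-a_{-}(x)-C\sum_{i}N^{p/\tilde\alpha_{i}-1}(K^{i}_{+}A_{\alpha_{i}}+K^{i}_{-}|x|^{\alpha_{i}})^{p/\tilde\alpha_{i}}$. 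Since $\alpha_{i}\le\tilde\alpha_{i}$, we have $\alpha_{i}p/\tilde\alpha_{i}\le p$, so $(K^{i}_{+}A_{\alpha_{i}}+K^{i}_{-}|x|^{\alpha_{i}})^{p/\tilde\alpha_{i}}\le C_{i}(1+|x|^{p})$ by convexity of $r\mapsto r^{p/\tilde\alpha_i}$ (or sub-additivity when $p/\tilde\alpha_i\le 1$). Together with $a_{-}(x)\le C''(1+|x|^{p})$, $c_{1}(x)\le C'''(1+|x|^{p})$, and $\mu(f-c_{1})\ge -\mu(f^{-})-\mu(c_{1}^{+})$, this gives a \emph{lower} bound $f^{cc}(x)\ge -\mu(f^{-})-K(1+|x|^{p})$. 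Symmetrically — though this requires a small argument since \cref{le:potentialGrowthp} is not itself symmetric in the two conjugations — the \emph{upper} bound on $f^{cc}$ follows: write $f^{cc}=(f^{c})^{?}$... actually $f^{cc}(x)=-\log\int e^{f^{c}(y)-c(x,y)}\nu(dy)$, and the upper estimate \eqref{eq:potentialGrowthpUpper} applied with the roles of $(\mu,\X)$ and $(\nu,\Y)$ swapped gives $g^{c}(x)-c_{1}(x)\le \nu((g-c_{2})^{+})+\dots$ for $g=f^{c}$; this bounds $f^{cc}(x)\le \nu((f^{c})^{-})+K(1+|x|^{p})$ after noting $-\mu(\cdots)$ terms only help. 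Combining the two one-sided bounds yields the stated estimate. The bookkeeping of which $\mu(f^{-})$ vs. $\nu((f^{c})^{-})$ term appears on which side is the part that needs care.

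\medskip

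\textbf{Scaling in~$\eps$.} For the last assertion, replace $c$ by $\tilde c=c/\eps$. Then the decomposition becomes $\tilde c(x,y)=\eps^{-1}c_{1}(x)+\eps^{-1}c_{2}(y)+\eps^{-1}\hat c(x,y)$, so all of $A_{+},A_{\alpha_{i}},K^{i}_{\pm},a_{\pm},c_{1},c_{2}$ get multiplied by~$\eps^{-1}$. Tracking this through \eqref{eq:potentialGrowthpLower}: the linear terms $\mu(f-c_{1})$, $A_{+}$, $a_{-}(x)$ scale like $\eps^{-1}$, while the dominating term $C\sum_{i}N^{p/\tilde\alpha_i-1}(K^{i}_{+}A_{\alpha_{i}}+K^{i}_{-}|x|^{\alpha_i})^{p/\tilde\alpha_i}$ scales like $(\eps^{-1})^{p/\tilde\alpha_i}$, and likewise the constant~$C$ from \cref{le:ExpMomentBd}/\eqref{eq:MGFineqAlpha} picks up $\eps$-dependence through $\lambda$; one checks via the explicit constant $C=(2q/\lambda p)^{q/(p-q)}(1-q/p)$ in the proof of \cref{le:ExpMomentBd} (now with $\lambda$ replaced by $\lambda$ for the \emph{unchanged} exponential-moment hypothesis $\int e^{\lambda|y|^p}\nu(dy)<\infty$ on~$\nu$, so that factor is actually $\eps$-independent) that the net growth in $\eps^{-1}$ of the coefficient of $|x|^p$ is the largest of the exponents $1$ and $p/\tilde\alpha_i$ over $i$, i.e. $\max\{p/\alpha,1\}$ with $\alpha=\min_i\tilde\alpha_i=p-\max_i\beta_i$. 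Hence $\tilde K=\eps^{-\max\{p/\alpha,1\}}K$.

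\medskip

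\textbf{Main obstacle.} The genuinely delicate point is the upper bound on $f^{cc}$: \cref{le:potentialGrowthp} is stated asymmetrically (it estimates $f^{c}$ from above and $f^{cc}$ from below), so obtaining an \emph{upper} bound on $f^{cc}$ requires either re-running the Jensen/Hölder argument with the roles of the two marginals interchanged — which is legitimate only because the hypotheses \eqref{eq:not_subgauss}--\eqref{eq:Aplus} plus the added growth conditions are effectively symmetric once we also assume $\int|x|^p\,d\mu<\infty$ — or a direct short argument: $-f^{cc}(x)=\log\int e^{f^{c}(y)-c(x,y)}\nu(dy)\ge \int(f^{c}(y)-c(x,y))\,\nu(dy)\cdot 0$... rather, apply Jensen again to get $f^{cc}(x)\le \int c(x,y)\,\mu'(dy)-\dots$ — but $f^{c}$ need not be $\nu$-integrable a priori, so one must first use the lower bound on $\hat c$ to control $(f^{c})^{+}$, exactly as in the proof of \eqref{eq:potentialGrowthpUpper}. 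Keeping the $f^{-}$ and $(f^{c})^{-}$ terms (rather than $|f|$, $|f^{c}|$) on the correct sides throughout, and verifying that the $\eps$-exponent is governed by $\min_i\tilde\alpha_i$ and not by some cross term, are where all the care goes; everything else is the convexity inequality $r^{s}\le 1+r^{p/\tilde\alpha_{\min}}$-type bookkeeping.
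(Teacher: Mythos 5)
Your proposal follows essentially the same route as the paper: the lower bound on $f^{cc}$ comes directly from \eqref{eq:potentialGrowthpLower} together with the observation that $\alpha_i\, p/\tilde\alpha_i\le p$ and that $c_1,a_-$ have growth of order $p$; the upper bound comes from the Jensen-type estimate \eqref{eq:potentialGrowthpUpper} applied to $g=f^c$ with the roles of $(\mu,\X)$ and $(\nu,\Y)$ interchanged, which yields $f^{cc}(x)-c_1(x)\le -\nu(f^c-c_2)+K'(1+|x|^p)$; and the $\eps$-scaling is read off \eqref{eq:potentialGrowthpLower} by noting that $C,C_0$ (and the marginal moments $A_{\alpha_i},B_{\beta_i}$) do not depend on the cost while $K^i_\pm,a_\pm,c_1,c_2$ pick up a factor $\eps^{-1}$, giving the dominating exponent $\max\{p/\alpha,1\}$. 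Your hesitations about the asymmetry of \cref{le:potentialGrowthp} resolve exactly as in the paper — only the Jensen half of the lemma is needed for the upper bound, and that half does swap cleanly — so the argument is the paper's argument, just stated more tentatively.
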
 

\begin{proof}
  The lower bound~\eqref{eq:potentialGrowthpLower} already states
  \begin{align*}
    f^{cc}(x) -c_{1}(x) &\geq \mu(f-c_{1}) - K(1+|x|^{p}).
  \end{align*} 
  If we replace~$c$ with $\tilde c:=c/\eps$ for $\eps\in(0,1)$, the constants in~\eqref{eq:potentialGrowthpLower} change by a factor~$\eps^{-1}$ (except $C,C_{0},A_{+},A_{\alpha_{i}}$ that do not depend on the cost function). Thus by inspection of~\eqref{eq:potentialGrowthpLower}, an analogous bound holds with the constant~$\tilde K:=\eps^{-\max\{p/\alpha,1\}} K$.

  Set $B_{+} = \int b_{+}(x)\,\mu(dx)$ and $B_{\beta_{i}}= \int |y|^{\beta_{i}}\,\nu(dy)$, then the upper bound in \cref{le:potentialGrowthp} (applied to $f^{c}$ instead of $f$) translates to 
  \begin{align*}
    f^{cc}(x)-c_{1}(x)&\leq B_{+} -\nu(f^{c}-c_{2}) + a_{+}(x) + \sum_{i=1}^{N} K^{i}_{+}B_{\beta_{i}}|x|^{\alpha_{i}}.
  \end{align*}
  This can be stated as
  \begin{align*}
    f^{cc}(x) -c_{1}(x) &\leq -\nu(f^{c}-c_{2}) + K'(1+|x|^{p})
  \end{align*} 
  and here $K'$ scales linearly in~$\eps^{-1}$. As~$c_{1},c_{2}$ have growth of order at most~$p$, the theorem follows.
\end{proof}

We note that \cref{ex:isolateTerm} is a special case of \cref{th:unifBiconjugBound}; here $\alpha=1$ and $a_{\pm}(x)=C(1+|x|^{p})$ while $c_{1}=0$.

In view of the a priori bounds~\eqref{eq:SinkMeansIncrease1}--\eqref{eq:SinkMeansIncrease2}, we deduce the following.

\begin{corollary}\label{co:iterateBound}
  Under the conditions of~\cref{th:unifBiconjugBound}, the Sinkhorn iterates $(\varphi_{t})_{t\geq0}$ and the dual potential~$\varphi_{*}$ satisfy
  \begin{align*}
    |\varphi_{t}(x)| + |\varphi_{*}(x)| &\leq K(1+|x|^{p})
  \end{align*} 
  with $K$ independent of~$t$.
  In particular, if~$\int e^{\lambda|x|^p}\,\mu(dx)<\infty$ for some $\lambda>0$, then the constants~$C_{1}$ of~\eqref{eq:C1inMainText} and $\tilde{C}_{1}$ of~ \eqref{eq:C1tilde} are finite. If $c\geq0$, then~$K,C_{1},\tilde{C}_{1}$ all admit the scaling behavior~\eqref{eq:Kscaling}.\footnote{The condition $c\geq0$ guarantees that $\log \xi\leq0$ in  \eqref{eq:SinkMeansIncrease1}--\eqref{eq:SinkMeansIncrease2}. If costs can be negative, the scaling behavior of $\xi$ needs to be examined separately.} 
\end{corollary}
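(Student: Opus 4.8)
The plan is to combine \cref{th:unifBiconjugBound} with the a priori mean bounds \eqref{eq:SinkMeansIncrease1}--\eqref{eq:SinkMeansIncrease2}, applied in the two situations singled out just before \cref{ex:isolateTerm}: namely $\varphi_{t+1}=\varphi_{t}^{cc}$ with $\psi_{t}=\varphi_{t}^{c}$, and $\varphi_{*}=\varphi_{*}^{cc}$ with $\psi_{*}=\varphi_{*}^{c}$. The key point is that \cref{th:unifBiconjugBound} gives $|f^{cc}(x)|\leq \mu(f^{-})+\nu((f^{c})^{-})+K(1+|x|^{p})$, so it remains to show that the two correction terms $\mu(f^{-})$ and $\nu((f^{c})^{-})$ are bounded uniformly, both for the Sinkhorn iterates and for the potentials.

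First I would handle the Sinkhorn iterates. Taking $f=\varphi_{t}$, we have $f^{c}=\psi_{t}$ and $f^{cc}=\varphi_{t+1}$, so it suffices to bound $\mu(\varphi_{t}^{-})$ and $\nu(\psi_{t}^{-})$. By the monotonicity \eqref{eq:SinkMeansIncrease1}--\eqref{eq:SinkMeansIncrease2} one has $\mu(\varphi_{t})\geq0$ and $\nu(\psi_{t})\geq -\log\xi$, and moreover the \emph{pointwise} lower bounds $\varphi_{t}\geq -c_{1}-K(1+|\cdot|^{p})$ and $\psi_{t}\geq -c_{2}-K'(1+|\cdot|^{p})$ follow directly from \eqref{eq:potentialGrowthpLower} (this is really what \cref{th:unifBiconjugBound} proves en route). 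Since $c_{1}$ has growth at most $p$ and $\int|x|^{p}\,d\mu<\infty$, this gives $\varphi_{t}^{-}\leq C(1+|\cdot|^{p})\in L^{1}(\mu)$ with a $t$-independent bound, and similarly for $\psi_{t}^{-}$ using $\int e^{\lambda|y|^{p}}\,d\nu<\infty$, which forces $\int|y|^{p}\,d\nu<\infty$. Feeding these uniform bounds on $\mu(\varphi_{t}^{-})$ and $\nu(\psi_{t}^{-})$ back into \cref{th:unifBiconjugBound} applied to $f=\varphi_{t}$ yields $|\varphi_{t+1}(x)|\leq K(1+|x|^{p})$ with $K$ independent of $t$; the case $t=0$ ($\varphi_{0}=0$) is immediate, so this covers all $\varphi_{t}$, $t\geq0$. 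The potential $\varphi_{*}$ is handled identically with $f=\varphi_{*}$, using $\mu(\varphi_{*})\geq0$, $\nu(\psi_{*})\geq-\log\xi$ (which hold because $(\varphi_{*},\psi_{*})$ are a limit of, or satisfy the same relations as, the iterates; alternatively via \eqref{eq:SinkMeansIncrease1}--\eqref{eq:SinkMeansIncrease2}) together with the pointwise lower bounds from \eqref{eq:potentialGrowthpLower}.

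Having $|\varphi_{t}(x)|+|\varphi_{*}(x)|\leq K(1+|x|^{p})$, the finiteness of $C_{1}$ and $\tilde C_{1}$ is immediate: $|\varphi_{t}-\varphi_{*}|\leq 2K(1+|x|^{p})$ and $|\varphi_{t}|\leq K(1+|x|^{p})$, so for any $\alpha>0$,
\begin{align*}
  \int e^{\alpha|\varphi_{t}-\varphi_{*}|}\,d\mu \leq e^{2\alpha K}\int e^{2\alpha K|x|^{p}}\,d\mu,
\end{align*}
which is finite once $2\alpha K<\lambda$; choosing such an $\alpha$ (independent of $t$) and plugging into the definitions \eqref{eq:C1inMainText} and \eqref{eq:C1tilde} gives $C_{1},\tilde C_{1}<\infty$. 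Finally, for the scaling claim when $c\geq0$: then $\log\xi\leq0$, so the lower bounds $\mu(\varphi_{t})\geq0$ and $\nu(\psi_{t})\geq0$ in \eqref{eq:SinkMeansIncrease1}--\eqref{eq:SinkMeansIncrease2} hold without any cost-dependent term, and the only $\eps$-dependence enters through $K$, which by \eqref{eq:Kscaling} scales like $\eps^{-\max\{p/\alpha,1\}}$. Since $C_{1}$ and $\tilde C_{1}$ can be taken of the form $\mathrm{const}\cdot K$ (e.g.\ optimizing $\alpha\sim\lambda/K$ in the displays above), they inherit the same scaling.

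The main obstacle is the bookkeeping around the correction terms $\mu(f^{-})$, $\nu((f^{c})^{-})$: one must verify that the \emph{pointwise} lower bounds extracted from \eqref{eq:potentialGrowthpLower} are genuinely uniform in $t$ (they are, because the only $t$-dependence there is through $\mu(f-c_{1})=\mu(\varphi_{t})-\mu(c_{1})$, which is controlled above and below by \eqref{eq:SinkMeansIncrease1}) and that they are dominated by an $L^{1}$ function independent of $t$ — this is where the hypotheses ``$c_{1},c_{2},a_{\pm}$ of growth $\leq p$'' and $\int|x|^{p}\,d\mu<\infty$, together with $\int e^{\lambda|y|^{p}}\,d\nu<\infty$, are exactly what is needed. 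Everything else is a direct substitution into \cref{th:unifBiconjugBound}.
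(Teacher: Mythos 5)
There is a genuine gap in the step that bounds $\nu(\psi_{t}^{-})$. You claim that a \emph{pointwise lower bound} $\psi_{t}\geq -c_{2}-K'(1+|\cdot|^{p})$ ``follows directly from \eqref{eq:potentialGrowthpLower}'' and is ``really what \cref{th:unifBiconjugBound} proves en route.'' It does not, and it is not. \eqref{eq:potentialGrowthpLower} bounds \emph{biconjugates} $f^{cc}$ on $\X$; it says nothing pointwise about the conjugate $f^{c}=\psi_{t}$, which lives on $\Y$. The only facts established about $f^{c}$ in the proof of \cref{le:potentialGrowthp}/\cref{th:unifBiconjugBound} are the \emph{upper} bound \eqref{eq:potentialGrowthpUpper} and an integral lower bound on its mean. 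A pointwise lower bound on $\psi_{t}$ would come from a symmetric application of \eqref{eq:potentialGrowthpLower} with the roles of $(\X,\mu)$ and $(\Y,\nu)$ exchanged, but the proof of that lower bound requires the exponential moment on the other marginal --- here $\int e^{\lambda|x|^{p}}\,d\mu<\infty$ --- which is \emph{not} among the hypotheses of \cref{th:unifBiconjugBound} (it is only assumed in the ``in particular'' part of the corollary). So the argument as written uses an ingredient that is unavailable exactly where you need it.

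The fix is small and does not require the exponential moment on $\mu$. To control $\nu(\psi_{t}^{-})$, combine the pointwise \emph{upper} bound from \eqref{eq:potentialGrowthpUpper} (applied to $f=\varphi_{t}$, using $\mu(\varphi_{t})\geq 0$) --- which gives $\psi_{t}(y)\leq c_{2}(y)+C(1+|y|^{p})$, hence $\nu(\psi_{t}^{+})$ bounded uniformly since $c_{2}$ has growth $\leq p$ and $\int|y|^{p}\,d\nu<\infty$ --- with the mean bound $\nu(\psi_{t})\geq -\log\xi$ from \eqref{eq:SinkMeansIncrease2}, to obtain $\nu(\psi_{t}^{-})=\nu(\psi_{t}^{+})-\nu(\psi_{t})$ bounded uniformly. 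Equivalently, and more cleanly, bypass the negative parts entirely: the proof of \cref{th:unifBiconjugBound} actually shows $\mu(f-c_{1})-K(1+|x|^{p})\leq f^{cc}(x)-c_{1}(x)\leq -\nu(f^{c}-c_{2})+K'(1+|x|^{p})$, so plugging in $\mu(\varphi_{t})\geq 0$ for the lower bound and $\nu(\psi_{t})\geq -\log\xi$ for the upper bound gives $|\varphi_{t+1}(x)|\leq K(1+|x|^{p})$ directly, which is what the paper's pointer to \eqref{eq:SinkMeansIncrease1}--\eqref{eq:SinkMeansIncrease2} is meant to trigger. Your remaining steps (treatment of $\varphi_{*}$, finiteness of $C_{1},\tilde C_{1}$, and scaling) are fine once the pointwise bound on $\varphi_{t}$ is in hand, though the assertion $\mu(\varphi_{*})\geq 0,\ \nu(\psi_{*})\geq -\log\xi$ deserves a word about the normalization of $(\varphi_{*},\psi_{*})$, since the potentials are only defined up to an additive constant.
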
 

\subsection{Results for Linear and Sublinear Growth}\label{se:sublinGrowth}

While the above results hold for general $p\geq 0$, an easier route is often available for $p\in [0,1]$, at least with some additional structure. 
The following is a generalization of a familiar result for Lipschitz costs. In contrast to the above, a single conjugation is sufficient to obtain upper and lower bounds in this setting. No exponential integrability is necessary, and the bounds scale linearly in the regularization~$\eps^{-1}$. We recall that $\cW_{1}$ denotes the 1-Wasserstein distance.

\begin{lemma}\label{le:unifCont}
  Let $\omega:\R_{+}\to\R_{+}$ be concave and nondecreasing.\footnote{Note that~$\omega$ need not be a modulus of continuity: it is not assumed that $\omega(0)=0$. For instance, all bounded costs satisfy the condition~\eqref{eq:unifContCond}.} Suppose 
  \begin{align}\label{eq:unifContCond}
    |c(x,y_{1})-c(x,y_{2})|\leq \omega(d_{\Y}(y_{1},y_{2})) \qforallq y_{1},y_{2}\in\Y, \quad x\in\X.
  \end{align} 
  Then $|f^{c}(y_{1})-f^{c}(y_{2})|\leq \omega(d_{\Y}(y_{1},y_{2}))$ and, with $B_{1}:=\int |y|\, \nu(dy)$,
  \begin{align*}
     \left|f^{c}(y) -\nu(f^{c})\right| \leq \omega(\cW_{1}(\delta_{y},\nu))\leq \omega(B_{1} + |y|) \leq \omega(B_{1}) + \omega(|y|).
  \end{align*} 
\end{lemma}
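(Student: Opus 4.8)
\textbf{Plan for proving Lemma~\ref{le:unifCont}.}
The strategy is to exploit the structure of the log-integral conjugation, transferring the modulus of continuity from~$c(x,\cdot)$ to~$f^c$ via a Jensen/convexity-free direct estimate, and then to convert a pointwise modulus estimate into the averaged statement by a Wasserstein coupling argument. First I would prove the first assertion. Fix $y_1,y_2\in\Y$. By definition,
\begin{align*}
  f^c(y_1) - f^c(y_2) = \log\frac{\int e^{f(x)-c(x,y_2)}\,\mu(dx)}{\int e^{f(x)-c(x,y_1)}\,\mu(dx)}.
\end{align*}
The hypothesis~\eqref{eq:unifContCond} gives $c(x,y_1) \geq c(x,y_2) - \omega(d_\Y(y_1,y_2))$ for every~$x$, hence $e^{f(x)-c(x,y_1)} \leq e^{\omega(d_\Y(y_1,y_2))} e^{f(x)-c(x,y_2)}$; integrating against~$\mu$ and taking logarithms yields $f^c(y_1) - f^c(y_2) \leq \omega(d_\Y(y_1,y_2))$. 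Swapping $y_1,y_2$ gives the reverse inequality, so $|f^c(y_1)-f^c(y_2)| \leq \omega(d_\Y(y_1,y_2))$.

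Next I would deduce the averaged bound. Since $f^c$ is $\omega$-continuous (in the above sense) and $\omega$ is concave and nondecreasing, Jensen's inequality gives, for any coupling $\gamma\in\Pi(\delta_y,\nu)$ — that is, $\gamma = \delta_y\otimes\nu$ in the only admissible case, but it is cleaner to phrase it for the general $\cW_1$ coupling between $\delta_y$ and $\nu$ —
\begin{align*}
  \left|f^c(y) - \nu(f^c)\right| = \left|\int \big(f^c(y) - f^c(y')\big)\,\nu(dy')\right| \leq \int \omega\big(d_\Y(y,y')\big)\,\nu(dy') \leq \omega\!\left(\int d_\Y(y,y')\,\nu(dy')\right),
\end{align*}
where the last step is Jensen applied to the concave~$\omega$. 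The integral $\int d_\Y(y,y')\,\nu(dy')$ is exactly $\cW_1(\delta_y,\nu)$ since the only coupling of $\delta_y$ and~$\nu$ is the product measure; this gives the first inequality $|f^c(y)-\nu(f^c)| \leq \omega(\cW_1(\delta_y,\nu))$. For the second, the triangle inequality for~$d_\Y$ gives $d_\Y(y,y') \leq |y| + |y'|$, hence $\cW_1(\delta_y,\nu) = \int d_\Y(y,y')\,\nu(dy') \leq |y| + \int|y'|\,\nu(dy') = |y| + B_1$, and monotonicity of~$\omega$ yields $\omega(\cW_1(\delta_y,\nu)) \leq \omega(B_1+|y|)$. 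Finally, the third inequality $\omega(B_1+|y|)\leq \omega(B_1)+\omega(|y|)$ is the standard subadditivity of a nondecreasing concave function with $\omega\geq0$: writing $s=B_1$, $t=|y|$, concavity on $[0,s+t]$ together with $\omega(0)\geq0$ gives $\omega(s) \geq \tfrac{t}{s+t}\omega(0) + \tfrac{s}{s+t}\omega(s+t) \geq \tfrac{s}{s+t}\omega(s+t)$ and symmetrically $\omega(t)\geq\tfrac{t}{s+t}\omega(s+t)$; adding these gives $\omega(s)+\omega(t)\geq\omega(s+t)$.

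\textbf{Main obstacle.} There is no serious obstacle here; the lemma is essentially bookkeeping once one notices that the log-integral form of the conjugate is an exponential average and therefore monotone under pointwise bounds on the integrand. The only mild subtlety is making sure $\nu(f^c)$ is well-defined and finite — this follows because $f^c(y) - f^c(y_0) \in [-\omega(d_\Y(y,y_0)), \omega(d_\Y(y,y_0))]$ and $\omega(d_\Y(\cdot,y_0))$ has at most linear growth (being concave), so $f^c \in L^1(\nu)$ provided $B_1 = \int|y|\,\nu(dy)<\infty$, which is implicitly needed anyway for the statement to be meaningful. One should also note that $\cW_1(\delta_y,\nu)$ is finite precisely under this same first-moment assumption, so the chain of inequalities is vacuously consistent when $B_1=\infty$ and genuinely informative when $B_1<\infty$.
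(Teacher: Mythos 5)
Your proof is correct and follows essentially the same route as the paper: the pointwise inequality $c(x,y_1)\ge c(x,y_2)-\omega(d_\Y(y_1,y_2))$ propagated through the exponential average to get $\omega$-continuity of $f^c$, then the triangle inequality and Jensen (for the concave $\omega$) to obtain the averaged bound, identification of $\int d_\Y(y,y')\,\nu(dy')$ with $\cW_1(\delta_y,\nu)$, and subadditivity of a nonnegative concave nondecreasing function. Your closing remark on the well-definedness and $\nu$-integrability of $f^c$ (via the at-most-linear growth of $\omega$ and $B_1<\infty$) is a useful detail that the paper leaves implicit.
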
 

\begin{proof}
  Let $y_{1},y_{2}\in\Y$. As $|c(x,y_{1})-c(x,y_{2})|\leq \omega(d_{\Y}(y_{1},y_{2}))$,
  \begin{align*}
  -f^{c}(y_{2})
  &=\log \int e^{f(x)-c(x,y_{2})}\, \mu(dx) \\
  &\leq\log \int e^{f(x)-c(x,y_{1}) + \omega(d_{\Y}(y_{1},y_{2}))}\, \mu(dx)  \\
  &= \omega(d_{\Y}(y_{1},y_{2})) - f^{c}(y_{1}).
  \end{align*}
  The first claim follows as~$y_{1}$ and~$y_{2}$ are interchangeable. Using Jensen's inequality, we deduce that
  \begin{align*}
  \left|f^{c}(y) -\int f^{c}\,d\nu\right| \leq \int |f^{c}(y)- f^{c}(y')|\,\nu(dy')
  \leq \int \omega(d_{\Y}(y,y')) \,\nu(dy')
\end{align*}
  which, by concavity and monotonicity of $\omega$, is bounded by 
  \begin{align*}
  \omega\left(\int d_{\Y}(y,y')\,\nu(dy')\right)=\omega(\cW_{1}(\delta_{y},\nu))\leq \omega(B_{1} + |y|).
\end{align*}
Note that~$\omega$ is subadditive by concavity and $\omega(0)\geq0$.
\end{proof}

\begin{corollary}\label{co:iterateBoundUnifCont}
  Let $\mu,\nu$ have finite first moments and let $c$ satisfy~\eqref{eq:unifContCond}. Then the Sinkhorn iterates $(\varphi_{t})_{t\geq0}$ and the dual potential~$\varphi_{*}$ satisfy
  \begin{align*}
    |\varphi_{t}(x)| + |\varphi_{*}(x)| &\leq  K + \omega(|x|)
  \end{align*} 
  with $K$ independent of~$t$.
  In particular, if~$\int e^{\lambda \omega(|x|)}\,\mu(dx)<\infty$ for some $\lambda>0$, then the constants~$C_{1}$ of~\eqref{eq:C1inMainText} and $\tilde{C}_{1}$ of~ \eqref{eq:C1tilde} are finite. If $c\geq0$, then~$K,\omega,C_{1},\tilde{C}_{1}$ scale at most linearly with the cost~$c$. 
\end{corollary}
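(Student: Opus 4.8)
The plan is to combine \cref{le:unifCont} with the a priori mean estimates \eqref{eq:SinkMeansIncrease1}--\eqref{eq:SinkMeansIncrease2}, exactly mirroring how \cref{co:iterateBound} follows from \cref{th:unifBiconjugBound}. Recall that the Sinkhorn iterates satisfy $\psi_{t}=\varphi_{t}^{c}$ and $\varphi_{t+1}=\varphi_{t}^{cc}$, and that $(\varphi_{*},\psi_{*})$ solve the Schr\"odinger system, i.e.\ $\psi_{*}=\varphi_{*}^{c}$ and $\varphi_{*}=\varphi_{*}^{cc}$. So in both cases the object of interest is a biconjugate $f^{cc}$, with the outer conjugation being a conjugation in the $y$-variable applied to $f^{c}$. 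The hypothesis \eqref{eq:unifContCond} is symmetric enough that the analogous statement holds for conjugation in the $x$-variable as well (swap the roles of $\X,\Y$ and of $\mu,\nu$ — one needs $|c(x_{1},y)-c(x_{2},y)|\le \omega(d_{\X}(x_{1},x_{2}))$, which is not literally assumed; see the obstacle paragraph below).

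First I would apply \cref{le:unifCont} to the function $f^{c}$ (i.e.\ conjugating in the $x$-variable) to get, for any $x\in\X$,
\begin{align*}
  |f^{cc}(x) - \mu(f^{cc})| \le \omega(B'_{1}+|x|) \le \omega(B'_{1}) + \omega(|x|),
\end{align*}
where $B'_{1}=\int|x|\,\mu(dx)<\infty$ by the finite first moment assumption. Then I would control $\mu(f^{cc})$: for $f=\varphi_{t}$ we have $f^{cc}=\varphi_{t+1}$ and $0\le\mu(\varphi_{t+1})\le \cC_{1}(\mu,\nu)+\log\xi$ by \eqref{eq:SinkMeansIncrease1}, while for $f=\varphi_{*}$ we have $f^{cc}=\varphi_{*}$ and $0\le\mu(\varphi_{*})\le\cC_{1}(\mu,\nu)$ via the same lemma. (For $\varphi_{0}=0$ one checks the bound directly.) Combining, $|\varphi_{t}(x)|+|\varphi_{*}(x)|\le K+\omega(|x|)$ with $K$ depending only on $B'_{1}$, $\omega(B'_{1})$ and $\cC_{1}(\mu,\nu)$, hence independent of~$t$. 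The ``in particular'' clause is then immediate: if $\int e^{\lambda\omega(|x|)}\,\mu(dx)<\infty$ then, for $\alpha$ small, $\int e^{\alpha|\varphi_{t}-\varphi_{*}|}\,d\mu\le \int e^{2\alpha(K+\omega(|x|))}\,d\mu<\infty$ uniformly in~$t$, giving $C_{1}<\infty$ in \eqref{eq:C1inMainText}; the same computation with $|\varphi_{t}|$ gives $\tilde C_{1}<\infty$ in \eqref{eq:C1tilde}. Finally, if $c\ge0$ then $\xi\le1$ so $\log\xi\le0$ and $\cC_{1}(\mu,\nu)\le\int c\,d(\mu\otimes\nu)$ scales linearly in the cost; replacing $c$ by $c/\eps$ scales $\omega$ (hence $K$ and the admissible $\alpha^{-1}$) linearly in $\eps^{-1}$, giving the stated scaling of $K,C_{1},\tilde C_{1}$.

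The main obstacle is the asymmetry of hypothesis \eqref{eq:unifContCond}, which only posits modulus-of-continuity control in the $y$-variable, whereas the lower/upper bound on $\varphi_{t}=f^{cc}$ naturally wants a conjugation in the $x$-variable. In the intended applications (e.g.\ $c(x,y)=d(x,y)^{p}$ with $p\le1$, or any cost jointly $1$-Lipschitz in a product metric) the bound \eqref{eq:unifContCond} does hold in both variables with the same $\omega$, so the argument goes through verbatim; in general one should read \eqref{eq:unifContCond} as holding symmetrically in $x$ and $y$ (which is how \cref{le:unifCont} is meant to be applied here), or alternatively run the one-sided argument: \cref{le:unifCont} applied in the $y$-variable already gives $|\psi_{t}(y)-\nu(\psi_{t})|\le\omega(B_{1})+\omega(|y|)$, and then $\varphi_{t+1}(x)=-\log\int e^{\psi_{t}(y)-c(x,y)}\,\nu(dy)$ combined with the upper bound on $c$ from integrability and the lower bound $c\ge0$ (or a growth bound on $c$) yields $|\varphi_{t+1}(x)|\le K+\omega(|x|)$ after absorbing $\nu(\psi_{t})$ via \eqref{eq:SinkMeansIncrease2}. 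Either way the only genuinely new input beyond \cref{le:unifCont} is bookkeeping of the mean bounds, so I do not expect any real difficulty.
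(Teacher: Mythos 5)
Your main line of argument is correct and matches the route the paper implicitly takes (it leaves this corollary without a written proof, clearly intending the reader to mirror \cref{co:iterateBound}): apply \cref{le:unifCont} to the outer conjugation to get $|f^{cc}(x)-\mu(f^{cc})|\le\omega(A_{1})+\omega(|x|)$, control $\mu(\varphi_{t+1})$ via \eqref{eq:SinkMeansIncrease1} (and $\mu(\varphi_{*})$ via the same bound after the normalization chosen before \cref{co:SinkPhiConvergence}), and then observe that $\int e^{\lambda\omega(|x|)}\,d\mu<\infty$ yields $C_{1},\tilde C_{1}<\infty$, with linear scaling in $\eps^{-1}$ when $c\ge0$ since $\omega$, $\cC_{1}(\mu,\nu)$ and $\log\xi$ all scale linearly.

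Your observation that the hypothesis as literally written is one-sided in $y$ is a genuine and worthwhile catch: the conclusion on $\varphi_{t}=f^{cc}$ really does require the mirror condition $|c(x_{1},y)-c(x_{2},y)|\le\omega(d_{\X}(x_{1},x_{2}))$ as well (as holds in the intended \cref{ex:distancepLess1}, where $c=d^{p}$ is symmetric). Be aware, though, that the ``alternative one-sided argument'' you sketch at the end does \emph{not} salvage the literal statement: taking $c(x,y)=h(x)\ge0$ with $h$ unbounded satisfies \eqref{eq:unifContCond} with $\omega\equiv0$, yet gives $\varphi_{t+1}(x)=h(x)+\mathrm{const}$, which is not of the form $K+\omega(|x|)$. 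The term $\int c(x,y)\,\nu(dy)$ in your Jensen upper bound is precisely what escapes control without a modulus in $x$, and invoking ``a growth bound on $c$'' is tantamount to re-imposing that modulus. So stick with the symmetric reading of \eqref{eq:unifContCond}; that is how the corollary should be understood, and then your proof is complete.
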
 

The following complements \cref{le:costpEstimates} for $p\leq1$.

\begin{example}\label{ex:distancepLess1}
  Let $d$ be a measurable metric on $\X=\Y$ and $|x|:=d(x,x_{0})$ for some $x_{0}\in\X$. Let $A_{1}:=\int |x|\, \mu(dx)<\infty$, $B_{1}:=\int |y|\, \nu(dy)<\infty$ and
  $$
  c(x,y)=d(x,y)^{p}, \quad\mbox{where}\quad p\in[0,1].
  $$
  By \cref{le:unifCont}, any (bi)conjugate function admits the modulus of continuity $\omega(s)=s^{p}$ and 
  \begin{align*}
   \left|f^{c}(y) -\nu(f^{c})\right|\leq B_{1}^{p} + |y|^{p}, \qquad \left|f^{cc}(x) -\mu(f^{cc})\right|\leq A_{1}^{p} + |x|^{p}.
  \end{align*} 
  If $\int e^{\lambda |x|^{p}}\,\mu(dx)<\infty$ for some $\lambda>0$, \cref{co:iterateBoundUnifCont} implies that~$C_{1}$ of~\eqref{eq:C1inMainText} and $\tilde{C}_{1}$ of~\eqref{eq:C1tilde} are finite with bounds growing at most linearly in the regularization~$\eps^{-1}$.
\end{example}

\paragraph{Funding, Conflicts and Data Availability.}

Promit Ghosal was supported by NSF Grant DMS-2153661. Marcel Nutz was supported by NSF Grants DMS-1812661 and DMS-2106056. The authors have no relevant financial or non-financial interests to disclose. Data and code sharing is not applicable to this article as no data or code were generated or analyzed.

\newcommand{\dummy}[1]{}

\end{document}